\newtheorem{Def}{Definition}[section]
\newtheorem{Thm}[Def]{Theorem}
\newtheorem{Lem}[Def]{Lemma}
\newtheorem{Prop}[Def]{Proposition}
\newtheorem{Cor}[Def]{Corollary}
\newtheorem{Rem}[Def]{Remark}
\numberwithin{equation}{section}
\newcommand{\AAa}{\mathbb{A}}
\newcommand{\BB}{\mathbb{B}}
\newcommand{\NN}{\mathbb{N}}
\newcommand{\RR}{\mathbb{R}}
\newcommand{\SSp}{\mathbb{S}}
\newcommand{\ZZ}{\mathbb{Z}}
\newcommand{\cA}{\mathcal{A}}
\newcommand{\cC}{\mathcal{C}}
\newcommand{\cD}{\mathcal{D}}
\newcommand{\cE}{\mathcal{E}}
\newcommand{\cF}{\mathcal{F}}
\newcommand{\cI}{\mathcal{I}}
\newcommand{\cL}{\mathcal{L}}
\newcommand{\cM}{\mathcal{M}}
\newcommand{\cN}{\mathcal{N}}
\newcommand{\cP}{\mathcal{P}}
\newcommand{\cR}{\mathcal{R}}
\newcommand{\cS}{\mathcal{S}}
\newcommand{\cV}{\mathcal{V}}
\newcommand{\cZ}{\mathcal{Z}}
\newcommand{\scC}{\mathscr{C}}
\newcommand{\scH}{\mathscr{H}}
\newcommand{\scM}{\mathscr{M}}
\newcommand{\scT}{\mathscr{T}}
\newcommand{\scX}{\mathscr{X}}
\newcommand{\bfF}{\textbf{F}}
\newcommand{\mbfE}{\mathbf{E}}
\newcommand{\mbfF}{\mathbf{F}}
\newcommand{\mbfI}{\mathbf{I}}
\newcommand{\mbfK}{\mathbf{K}}
\newcommand{\mbfM}{\mathbf{M}}
\newcommand{\Int}{\text{Int}}
\newcommand{\Clos}{\text{Clos}}
\newcommand{\spt}{\text{spt}}
\newcommand{\Sing}{\mathrm{Sing}}
\newcommand{\Reg}{\mathrm{Reg}}
\title{Mean Convex Smoothing of Mean Convex Cones}
\author{Zhihan Wang}
\address{Department of Mathematics, Princeton University, Fine Hall, 304 Washington Road, Princeton, NJ 08540, USA}
\email{zhihanw@math.princeton.edu}
\begin{document}

\begin{abstract}
  We show that any minimizing hypercone can be perturbed into one side to a properly embedded smooth minimizing hypersurface in the Euclidean space, and every viscosity mean convex cone admits a properly embedded smooth mean convex self-expander asymptotic to it near infinity.  These two together confirm a conjecture of Lawson \cite[Problem 5.7]{Brothers86_OpenProblems}. 
\end{abstract}
\maketitle

\section{Introduction} \label{Sec_Intro}
  Regularity theory for minimal hypersurfaces has long been studied in the history.  By the landmark work of \cite{DeGiorgi61, FedererFleming60, Simons68, Federer70_DimReduc}, an area-minimizing \textit{hypersurface} in the Euclidean space or in a Riemannian manifold is always smooth away from some closed singular set of Hausdorff codimension $\geq 7$; Near each singularity, minimizing hypersurfaces are modeled on minimizing hypercones, i.e. minimizing hypersurfaces in the Euclidean space invariant under rescalings. 
  This regularity result was generalized to stable minimal hypersurfaces without codimension $1$ edge-type singularities by \cite{SchoenSimon81, Wickramasekera14_RegStableMH}.  Later, \cite{Simon93_SurveyRect, CheegerNaber13_QuantStratif, NaberValtorta20_MS_Rect} carefully studied the structure of singular set, proving that they are in fact codimension $7$-rectifiable.  On the other hand, \cite{Simon2021_Prescip_Sing} constructed examples of stable minimal hypersurfaces in $\RR^{m+7+1}$ ($m\geq 1$) under $C^\infty$ Riemannian metrics arbitrarily close to the Euclidean metric, whose singular set is any prescribed closed subset in $\RR^m\times\{\mathbf{0}\}$.  This suggests that the singular set can be bad in general.  
  
  On the other hand, those singularities of minimizing hypersurfaces seem to be \textit{unstable} under perturbation.  In \cite{HardtSimon85}, for a regular minimizing hypercone $C\subset \RR^{n+1}$ (regular means $C$ has only isolated singularity at the origin), it was shown that there exists a unique (up to rescaling) minimizing hypersurface $S$ lying on one side of $C$ which has \textbf{no singularity}. Moreover, such $S$ is a radial graph over certain domain on $\SSp^n$.  Based on this, \cite{Smale93} shown that for a closed oriented $8$-manifold $M$ with nontrivial $7$-th homology group and a $C^\infty$-generic Riemannian metric $g$, every (homologically) minimizing hypersurface in $(M, g)$ has no singularity.  Later, this was generalized by \cite{ChodoshLioukumovichSpoloar20_GenericReg, LiYyWangZH20_Generic} to generic metric on an arbitrary closed $8$-manifold to construct at least one entirely smooth minimal hypersurface (not necessarily area-minimizing). 
  
  In this paper, we generalize the existence result by Hardt and Simon to arbitrary minimizing hypercones, not necessarily with only isolated singularity.
  \begin{Thm} [c.f. Theorem \ref{Thm_Pf_MinSmooth}] \label{Thm_Intro_MinSmooth}
   Let $E \subset \RR^{n+1}$ be a closed subset such that $C := \partial E$ is a minimizing hypercone in $\RR^{n+1}$. Then there exists $S\subset \Int(E)$ a properly embedded smooth minimizing hypersurface in $\RR^{n+1}$ with distant $1$ to the origin and $C$ to be the tangent cone at infinity. 
   Moreover, $S$ is a radial graph over $\Int(E)\cap \SSp^n$.
  \end{Thm}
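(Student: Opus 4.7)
My strategy is to extend Hardt--Simon's construction variationally rather than perturbatively, since their radial graph ansatz over the link $C\cap\SSp^n$ requires smoothness of that link. For each $R>1$, I would solve the one-sided obstacle problem of minimizing $\mathcal{H}^n(\partial F)$ in $B_R$ among Caccioppoli sets $F \subset E$ with $F \triangle E \Subset B_R$. Standard $BV$ compactness and lower semicontinuity produce a minimizer $E_R$; the free part of $\partial E_R$ is area-minimizing in $\Int(E)$, and nontriviality (i.e., $E_R \subsetneq E$ in $B_R$) would follow from a barrier argument that pushes a regular piece of $C$ into $\Int(E)$ along a positive Jacobi field on the regular part of $C\cap\SSp^n$, available by stability of $C$. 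After rescaling so that $\mathrm{dist}(0,\partial E_R)=1$ and passing to a subsequential varifold limit as $R\to\infty$, I obtain $S\subset \overline{\Int(E)}$, area-minimizing, at distance $1$ from the origin.

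\textbf{Regularity---the main obstacle.} Interior regularity away from $C$ gives $\dim \Sing(S)\leq n-7$; the real task is to remove \emph{every} singularity, which cannot be done by a perturbative radial-graph ansatz because $\Sing(C)$ may have positive dimension. My approach is to establish that the dilation family $\{\lambda S\}_{\lambda>0}$ is a foliation of $\Int(E)$. The non-intersection $\lambda_1 S \cap \lambda_2 S = \emptyset$ for $\lambda_1\neq \lambda_2$ should follow from the strict maximum principle for area-minimizing currents lying on one side of each other, in its sharpest form across singular sets (Solomon--White, Ilmanen, Wickramasekera). Once the foliation is in place, any hypothetical $p \in \Sing(S)$ has a tangent cone inheriting a dilation foliation of $\RR^{n+1}$ by minimizing hypersurfaces; applying the maximum principle at its tip rules out every singular minimizing hypercone and forces the tangent cone at $p$ to be a multiplicity-one hyperplane, whereupon Allard regularity contradicts $p\in \Sing(S)$. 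An analogous argument at points of $C$, using the one-sided maximum principle at $\Sing(C)$, rules out $S\cap C \neq \emptyset$. The hardest step, and the principal obstacle, is proving non-intersection of dilates in the presence of a priori singularities of $S$ meeting $\Sing(C)$; this is where I expect the deepest technical input to be needed.

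\textbf{Asymptotics and radial graph.} Once $S$ is smooth and $\{\lambda S\}_{\lambda>0}$ foliates $\Int(E)$, each ray from the origin into $\Int(E)$ meets $S$ in exactly one point---uniqueness by non-crossing, existence by the sweepout property---which directly yields the radial graph structure over $\Int(E)\cap\SSp^n$. Monotonicity of the density ratio and Allard regularity identify the tangent cone of $S$ at infinity as a minimizing cone supported in $\overline{E}$, and the non-crossing property of the dilation family forces this cone to be $C$ itself, completing the theorem.
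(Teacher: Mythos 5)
Your construction phase is close in spirit to the paper's: minimize with a one-sided obstacle, rescale to distance $1$, and use the maximum principle across singular sets (Solomon--White, Ilmanen) to get that the dilates $\{\lambda S\}_{\lambda>0}$ are nested and hence that the rescaling Jacobi field $\psi := X\cdot\nu_S$ is nonnegative, and then strictly positive, on $\Reg(S)$. (The paper routes the construction a little differently: it first replaces $\cE = E\cap\SSp^n$ by an increasing sequence of $C^{1,1}$-optimally-regular mean convex domains $\cE_j\subset\Int(\cE)$ -- Corollary~\ref{Cor_C^1,1 optimal reg mean convex approx} -- so that the Hardt--Simon radial graph argument applies directly in each $C(\cE_j)$ to give smooth minimizers $P_j$ in $\BB_1$, and then rescales by $d_j := \mathrm{dist}(\mathbf{0},P_j)\to 0$. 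That detour avoids having to verify nontriviality and the radial graph property against the singular obstacle $\partial E$ directly, but it is a matter of organization rather than substance.)

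The genuine gap is in your regularity step. You assert that ``any hypothetical $p\in\Sing(S)$ has a tangent cone inheriting a dilation foliation of $\RR^{n+1}$, [and] applying the maximum principle at its tip rules out every singular minimizing hypercone.'' Both halves of this are wrong. First, blowing up the dilation family $\{\lambda S\}$ at a point $p\neq \mathbf 0$ does \emph{not} produce a dilation foliation of the tangent cone $C_p$: one checks directly that $\eta_{p,r^{-1}}\big((1+rt)S\big)\to tp + C_p$, i.e.\ the family degenerates to \emph{translates} of $C_p$ in the $p$-direction. Second, even once corrected, the resulting information is only that $v := \langle p,\nu_{C_p}\rangle$ is a bounded, nonnegative (and by strong maximum principle, either identically zero or everywhere positive) homogeneous degree-$0$ Jacobi field on $\Reg(C_p)$. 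No maximum principle ``at the tip'' converts this into flatness of $C_p$: the translates $tp+C_p$ are merely pairwise disjoint (or else $C_p$ is $p$-translation-invariant, which is not a contradiction either -- it just splits a line), and a positive bounded translation Jacobi field on a singular cone is, a priori, perfectly consistent with the maximum principle. The extra dimensions where $\Sing(C)$, and hence potentially $\Sing(S)$, is positive-dimensional is exactly why no barrier-only or max-principle-only argument closes the loop. You have also misplaced where the deep input lies: the pairwise disjointness of the dilates in the presence of singularities is in fact the \emph{easy} part (it is exactly what the Solomon--White and Ilmanen maximum principles are for); the real obstruction is passing from $\psi>0$ bounded to $\Sing(S)=\emptyset$.

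What the paper supplies to fill this gap is a quantitative blowup estimate: Theorem~\ref{Thm_pos super Jac field div} (via the Harnack inequality of Corollary~\ref{Cor_Harnack Ineq} and the growth Lemma~\ref{Lem_Growth Jac Field}, following the multiplicity-one Proposition~\ref{Prop_Multi 1 in smaller scale}) shows that any positive supersolution $u$ of $\Delta_\Sigma u + |A_\Sigma|^2 u - \Lambda u \leq 0$ on a stable minimal hypersurface must satisfy $\liminf_{y\to x} u(y)=+\infty$ at every singular point $x$, with a definite exponent $\gamma_n < -1$ governing the rate. Applied to $\psi = X\cdot\nu_S$, which is a genuine Jacobi field, hence a supersolution, and which is \emph{bounded} on every ball since $|\psi|\leq|X|$, this immediately forces $\Sing(S)=\emptyset$. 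Your proposal has no substitute for this step, and I do not see a way to avoid it: the foliation/positivity of $\psi$ alone is not enough.
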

  We might expect this theorem to be a first step towards the full generic regularity problem for minimizing hypersurfaces in general dimension. 
  However, we assert that the uniqueness (up to rescaling) of such one-sided smooth perturbation is still open, even in the splitting case, i.e. when $C = C_0 \times \RR^k$ for some regular minimizing hypercone $C_0$.  In \cite{Simon2021_Liouville}, uniqueness in the splitting case is proved under further strict stability assumption.  We also mention that \cite{Lohkamp18_Smoothing} claimed the existence and uniqueness of such one-sided minimizing perturbation, with a seemingly incomplete proof.
  On the other hand, even if the uniqueness (up to rescaling) is established, it's still unclear how to use it to smoothing a general minimizing hypersurface in a Riemannian manifold. 
  
  When $C$ is not minimizing in $E$, by a barrier argument, such minimal hypersurface lying in the interior of $E$ should not exist. \cite{Lin87_Approx} shown that for $n+1\leq 8$, for a regular minimal hypercone $C = \partial E\subset\RR^{n+1}$, there exists a smooth mean convex hypersurface asymptotic to $C$ near infinity.  \cite{Ding20_MinCone_SelfExpander} shown that more generally, if the cone $C$ is $C^{3,\alpha}$-regular and mean convex of arbitrary dimension, but not minimizing in $E$, then there exists a unique smooth strictly mean convex self-expander sitting inside $\Int(E)$ and properly embedded in $\RR^{n+1}$.  Here a self-expander is a hypersurface satisfying the equation, \[
    \vec{H} - \frac{X^\perp}{2} = 0,   \] 
  where $\vec{H}$ is the mean curvature vector and $X^\perp$ is the projection of position vector $X$ onto the normal direction.  Self-expander was introduced in \cite{EckerHuisken89_EntireGraph} as model of long time behavior for mean curvature flow, and also studied by \cite{Ilmanen98_LecturesMCF} as model for mean curvature flow coming out of a singularity. See Section \ref{Subsec_Self-expander} for more on geometry of self-expanders.  
  
  We also generalize Ding's result to arbitrary mean convex hypercone, not necessarily regular.
  \begin{Thm}[c.f. Theorem \ref{Thm_Pf_MeanConv}] \label{Thm_Intro_MeanConvexSmoothing}
   Let $E \subset \RR^{n+1}$ be a closed subset such that $C := \partial E$ is a mean convex hypercone in $\RR^{n+1}$ and not minimizing in $E$. Then there exists $S\subset \Int(E)$ a smooth strictly mean convex self-expander properly embedded in $\RR^{n+1}$ with $C$ to be the tangent cone at infinity.  In particular, $S$ is a radial graph over $\Int(E)\cap \SSp^n$.  
  \end{Thm}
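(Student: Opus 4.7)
The plan is to reduce the singular case to Ding's theorem \cite{Ding20_MinCone_SelfExpander} by an inward smooth approximation. I would first produce a sequence of $C^{3,\alpha}$ strictly mean convex hypercones $C_k = \partial E_k$ with $E_k \subset E$ and $C_k \to C$ in a suitable sense, then apply Ding's theorem to each $C_k$ to obtain self-expanders $S_k \subset \Int(E_k)$, and finally extract a smooth limit $S \subset \Int(E)$ with the required properties. For the approximation, I work on the link $L := C \cap \SSp^n$, where viscosity mean convexity of $C$ translates to the same condition for $L$ in $\SSp^n$. Smooth strictly mean convex approximations $L_k \subset \Int(E) \cap \SSp^n$ of $L$ can be built by, for instance, evolving $L$ for a short time under the level set mean curvature flow in $\SSp^n$ and mollifying; the viscosity condition ensures this evolution pushes $L$ strictly into $E \cap \SSp^n$. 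Coning off gives $C_k = \partial E_k$ with $E_k \subset E$, and a compactness argument shows that non-minimality of $C$ in $E$ passes to non-minimality of $C_k$ in $E_k$ for all $k$ large.

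For each such $C_k$, \cite{Ding20_MinCone_SelfExpander} supplies a unique properly embedded smooth strictly mean convex self-expander $S_k \subset \Int(E_k)$, realized as a radial graph over $\Int(E_k) \cap \SSp^n$ and asymptotic to $C_k$ at infinity. Curvature estimates for strictly mean convex self-expanders, together with the uniform radial graph structure and scale invariance of the expander equation, yield $C^{2,\alpha}$ bounds on the $S_k$ on compact subsets of $\RR^{n+1} \setminus (\Sing C \cup \{0\})$. Standard geometric compactness then extracts a smooth subsequential limit $S$ on that region, which is a self-expander contained in $E$ and a radial graph over $\Int(E) \cap \SSp^n$, hence properly embedded in $\RR^{n+1}$.

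The central step is to show $S \subset \Int(E)$, excluding the degenerate possibility $S = C$. At a hypothetical contact point $p \in S \cap C$ that is a regular point of $C$, the strong maximum principle for the self-expander operator forces $S$ and $C$ to agree in a neighborhood of $p$, hence globally via the radial graph structure and unique continuation, contradicting the non-minimizing hypothesis. For $p \in \Sing C$, a viscosity-barrier version of the same comparison applies, using that viscosity mean convexity of $C$ furnishes one-sided smooth comparison surfaces. Once $S \subset \Int(E)$ is established, strict mean convexity $H > 0$ of $S$ is inherited from $\{S_k\}$ by applying the strong maximum principle to the linearized expander operator acting on $H$, noting that $H \equiv 0$ would force $S$ to be minimal, hence equal to the cone, which is ruled out.

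The main obstacle is precisely the separation argument at $\Sing C$: viscosity mean convexity gives only weak information there, so the non-minimizing hypothesis must be converted into a uniform inner barrier for the family $\{S_k\}$ that is robust across the singular set. Constructing such a barrier, perhaps as a suitably rescaled piece of a self-expander generated by an interior competitor surface for $C$ in $E$, is the principal technical challenge. The inward smooth approximation of Step 1, which must simultaneously preserve strict mean convexity and non-minimality in a manner compatible with a possibly complicated $\Sing C$, is a secondary but also delicate ingredient; it is the point at which the full strength of the viscosity hypothesis enters.
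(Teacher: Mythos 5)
Your reduction to Ding's theorem via inward $C^{3,\alpha}$ approximation is a natural starting point and is broadly parallel to Step 1 of the paper's proof (which uses $C^{1,1}$-optimally regular approximations $\cE_j$ and a Plateau-type existence result with obstacle, Lemma \ref{Lem_Exist Self-Expander w Obstacle}, rather than Ding's full $C^{3,\alpha}$ theory). However, the proposal has a genuine gap at what is really the heart of the theorem: you never establish that the subsequential limit $S$ is smooth. The limit of the $S_k$ is only, a priori, an $\mbfE$-minimizing (hence $\mbfE$-stable) boundary, and Schoen--Simon/Wickramasekera regularity gives a singular set of Hausdorff codimension $\geq 7$, which need not be empty when $n\geq 7$. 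Your claimed ``$C^{2,\alpha}$ bounds on $S_k$ on compact subsets of $\RR^{n+1}\setminus(\Sing C\cup\{0\})$'' do not follow from any cited estimate: singularities of $S$ could form anywhere, not only above $\Sing C$, and the assertion that the estimates come from ``scale invariance of the expander equation'' is incorrect --- the self-expander equation is not scale-invariant, and the paper explicitly flags this ($\eta_\lambda(\Sigma)$ of a self-expander $\Sigma$ is not a self-expander for $\lambda\neq 1$), which is exactly why a direct maximum-principle comparison of $S$ against its own rescalings fails. Consequently you cannot conclude that the radial-graph structure of $S_k$ passes to the limit either; a multiplicity issue or a non-graphical limit is not ruled out by the argument given.

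The paper's route to regularity is entirely different and constitutes the main technical input that your proposal is missing. It shows, via the avoidance principle for White's level set flow (Theorem \ref{Thm_White_Avoid}, Lemma \ref{Lem_Trancated self-expander is a weak flow}) applied to the truncated self-expanders of the approximating cones, that the limit $F^\infty$ satisfies $\eta_\lambda(F^\infty)\subset F^\infty$ for every $\lambda\geq 1$ and that $\partial F^\infty = \partial D_1$ where $D_t$ is the level set flow of $\RR^{n+1}\setminus\Int(E)$. This one-sided monotonicity produces a nonnegative, not identically zero solution $\phi = X\cdot\nu_{S^\infty} = 2H_{S^\infty}$ of $(\cL_{S^\infty}+1)\phi = 0$ on the regular part; the strong maximum principle upgrades it to $\phi > 0$, and then the divergence result for positive super-Jacobi fields at singular points (Theorem \ref{Thm_pos super Jac field div} / Corollary \ref{Cor_pos super Jac field div near sing}) forces $\Sing(S^\infty)=\emptyset$ because $\phi$ is locally bounded. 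Your proposal contains neither the level-set-flow monotonicity producing $\phi\geq 0$ nor the Jacobi-field divergence mechanism ruling out singularities of $S$; without both, the argument cannot reach the conclusion. The ``separation at $\Sing C$'' that you identify as the main obstacle is a secondary issue in comparison; the decisive difficulty is establishing $\Sing S = \emptyset$.
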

  Here, mean convexity assumption for $E$ is in viscosity sense, see Definition \ref{Def_Viscosity mean convex}. In particular, Theorem \ref{Thm_Intro_MeanConvexSmoothing} applies when $\partial E$ is a stationary minimal hypercone, possibly with singularities, which is not minimizing in $E$.  On the other hand, uniqueness is again unclear in this non-smooth case. \\
  
  Combining Theorem \ref{Thm_Intro_MinSmooth} and \ref{Thm_Intro_MeanConvexSmoothing}, we confirm a conjecture of Lawson \cite[Problem 5.7]{Brothers86_OpenProblems}:
  \begin{Thm} \label{Thm_Intro_LawsonConj}
   Given a stable minimal hypercone $C\subset \RR^{n+1}$ and every $\epsilon>0$, there exists a properly embedded smooth hypersurface with positive mean curvature in $\BB_1^{n+1}(\mathbf{0})$ within Hausdorff distant $\leq\epsilon$ from $C\cap \BB_1^{n+1}(\mathbf{0})$.
  \end{Thm}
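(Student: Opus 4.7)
The plan is to combine Theorems \ref{Thm_Intro_MinSmooth} and \ref{Thm_Intro_MeanConvexSmoothing} via a dichotomy on whether $C$ is area-minimizing, followed by a simple rescaling. Since $C$ is a stable minimal hypercone, its complement in $\RR^{n+1}$ consists of two connected open cones; let $E$ denote the closure of one of them, so $C = \partial E$. Classical minimality of $C$ on its regular part entails that $E$ is viscosity mean convex in the sense of Definition \ref{Def_Viscosity mean convex}, verifying the hypotheses of the previous theorems.

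I would then split into two cases. \textbf{Case A} is when $C$ is area-minimizing in $\RR^{n+1}$: here Theorem \ref{Thm_Intro_MinSmooth} directly furnishes a properly embedded smooth minimizing hypersurface $S \subset \Int(E)$ with $C$ as tangent cone at infinity. \textbf{Case B} is when $C$ is not area-minimizing: a standard cut-and-paste decomposition of any strict competitor shows $C$ must fail to minimize on at least one side, so we choose $E$ such that $C$ is not minimizing in $E$, and Theorem \ref{Thm_Intro_MeanConvexSmoothing} produces a properly embedded smooth \emph{strictly} mean convex self-expander $S \subset \Int(E)$ asymptotic to $C$.

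With $S$ in hand, I set $S_\lambda := \lambda S$ for $\lambda > 0$. Smoothness, proper embeddedness, and containment in the cone $\Int(E)$ are all preserved, and the mean curvature scales as $H_{S_\lambda} = \lambda^{-1} H_S$, retaining its sign (strictly positive in Case B, identically zero in Case A). Since $C$ is self-similar and is the tangent cone of $S$ at infinity,
\begin{equation*}
  d_H\bigl(S_\lambda \cap \BB_1^{n+1}, \, C \cap \BB_1^{n+1}\bigr) \;=\; \lambda \cdot d_H\bigl(S \cap \BB_{1/\lambda}^{n+1}, \, C \cap \BB_{1/\lambda}^{n+1}\bigr) \;\longrightarrow\; 0 \quad \text{as } \lambda \to 0^+.
\end{equation*}
Choosing $\lambda$ small enough (and, if needed, generic so that $S_\lambda$ meets $\partial \BB_1^{n+1}$ transversally) gives $S_\lambda \cap \BB_1^{n+1}$ as a properly embedded smooth hypersurface in the unit ball within Hausdorff distance $\epsilon$ of $C \cap \BB_1^{n+1}$.

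The main difficulty I anticipate lies only in Case A, where $S_\lambda$ is minimal and hence only weakly mean convex. If ``positive mean curvature'' in the conclusion is read in the common mean-convex sense $H \geq 0$, the construction is complete. If strict positivity is required, one must additionally perturb the Hardt--Simon leaf $S$ inside $\Int(E)$ into a strictly mean convex hypersurface with the same tangent cone at infinity --- for instance by a small compactly supported normal graph with a carefully chosen positive bump function, so that the linearized mean curvature $L_S u$ is strictly positive on the bump's support --- before rescaling. This reduction is routine compared with the deep Theorems \ref{Thm_Intro_MinSmooth} and \ref{Thm_Intro_MeanConvexSmoothing}, which do the heavy lifting.
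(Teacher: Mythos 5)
Your dichotomy on whether $C$ minimizes, and your treatment of the non-minimizing case (Case~B) via Theorem~\ref{Thm_Intro_MeanConvexSmoothing} followed by blowing down, are exactly what the paper does. The gap is in Case~A, where $C$ is area-minimizing and the Hardt--Simon leaf $S$ is merely minimal. The paper does require strict positivity of $H$, so this case cannot be dismissed, and your proposed fix does not work.

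You suggest perturbing $S$ by a compactly supported normal graph with a nonnegative bump $u$ chosen so that $L_S u > 0$ on $\supp u$, and then rescaling. Two things go wrong. First, such a $u$ cannot exist on a stable minimal hypersurface: let $\Omega$ be any smooth domain with $\supp u \subset \Omega \subset\subset S$ and let $\psi > 0$ be the first Dirichlet eigenfunction of $-L_S$ on $\Omega$ with eigenvalue $\lambda_1 \ge 0$ (stability of $S$); then
\begin{equation*}
  0 < \int_\Omega \psi\, L_S u \;=\; \int_\Omega u\, L_S \psi \;=\; -\lambda_1 \int_\Omega u\,\psi \;\le\; 0,
\end{equation*}
a contradiction. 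Second, even granting such a bump, a compactly supported perturbation leaves $S$ unchanged, hence minimal with $H = 0$, outside $\supp u$; after rescaling by $\lambda \to 0$ the bump's support shrinks to a tiny ball, so $S_\lambda \cap \BB_1$ is still almost entirely minimal, not strictly mean convex.

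The paper's resolution is to reverse the order: first blow down, obtaining $\eta_\lambda(S)\cap\BB_1$ close to $C\cap\BB_1$, and observe that this domain of $\eta_\lambda(S)$ is \emph{strictly} stable (this uses the positive Jacobi field $X\cdot\nu_S > 0$ on $S$ produced in the proof of Theorem~\ref{Thm_Pf_MinSmooth}, whose existence forces the first Dirichlet eigenvalue on any bounded subdomain to be strictly positive). Then one perturbs by the first Dirichlet eigenfunction $\phi_1 > 0$ of $L_{\eta_\lambda(S)}$ on $\eta_\lambda(S)\cap\BB_1$: since $L\phi_1 = -\mu\phi_1$ with $\mu > 0$, the normal graph $x \mapsto x - \varepsilon\phi_1(x)\nu$ has mean curvature $\approx \varepsilon\mu\phi_1 > 0$ throughout the open hypersurface for small $\varepsilon$. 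This perturbation is supported on all of $\eta_\lambda(S)\cap\BB_1$ rather than a fixed compact subset of $S$, which is precisely what your version misses.
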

  
  A key ingredient of the proof of both Theorem \ref{Thm_Intro_MinSmooth} and \ref{Thm_Intro_MeanConvexSmoothing} is the following divergence of positive super-solution to Jacobi field equation near singularities.
  \begin{Thm} [c.f. Corollary \ref{Cor_pos super Jac field div near sing}] \label{Thm_pos super Jac field div}
   Let $(M, g)$ be a smooth Riemannian manifold (not necessarily complete), $\Lambda>0$.  Let $\Sigma \subset M$ be a two-sided stable minimal hypersurface with singular set of codimension $\geq 7$.  Let $u \in C^2_{loc}(\Sigma)$ be a positive function such that \[
     \Delta_\Sigma u + |A_\Sigma|^2 u - \Lambda u \leq 0,   \]
   on $\Sigma$.  Then for every $x\in \Sing(\Sigma)$, \[
     \liminf_{y\to x} u(y) = +\infty.   \]   
  \end{Thm}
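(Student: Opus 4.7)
The plan is to argue by contradiction: assume $\liminf_{y\to x_0} u(y) < \infty$ at some $x_0\in\Sing(\Sigma)$, and derive a contradiction. The first tool is the logarithmic substitution $v:=\log u$, which converts the super-solution inequality into
$$\Delta_\Sigma v + |\nabla_\Sigma v|^2 + |A_\Sigma|^2 \leq \Lambda.$$
Multiplying by $\eta^2$ for $\eta\in C^1_c(\Reg(\Sigma))$, integrating by parts, and applying a weighted Cauchy-Schwarz produces
$$\int_\Sigma \eta^2\bigl(\tfrac{1}{2}|\nabla v|^2 + |A_\Sigma|^2\bigr) \leq 2\int_\Sigma |\nabla\eta|^2 + \Lambda\int_\Sigma \eta^2.$$
The second tool is the codimension-$\geq 7$ hypothesis on $\Sing(\Sigma)$: this set has vanishing $W^{1,2}$-capacity in $\Sigma$, so for any $\phi\in C^1_c(M)$ supported near $x_0$ one can find $\chi_\varepsilon\in C^1_c(\Reg(\Sigma))$ vanishing near $\Sing(\Sigma)$ with $\int\phi^2|\nabla\chi_\varepsilon|^2\to 0$. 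Substituting $\eta=\phi\chi_\varepsilon$ and letting $\varepsilon\to 0$ shows $|\nabla \log u|^2, |A_\Sigma|^2\in L^1_{\mathrm{loc}}(\Sigma)$ near $x_0$.

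Under the contradiction assumption, I would next upgrade $\liminf u<\infty$ to a genuine local upper bound $u\leq M$ on $B_r(x_0)\cap\Reg(\Sigma)$ by chaining interior Harnack along short paths in $\Reg(\Sigma)$, so that $\nabla u = u\nabla\log u\in L^2_{\mathrm{loc}}$ and the super-solution inequality $-\Delta_\Sigma u + (\Lambda-|A_\Sigma|^2)u\geq 0$ extends distributionally across $\Sing(\Sigma)$ by capacity-$0$ removal. The key remaining step is to construct a positive \emph{Jacobi-type barrier} $w$ on $B_r(x_0)\cap\Reg(\Sigma)$ that is $L_\Lambda$-super-harmonic (for $L_\Lambda := -\Delta_\Sigma-|A_\Sigma|^2+\Lambda$) and satisfies $w\to\infty$ at $\Sing(\Sigma)$. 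Strict coercivity of $L_\Lambda$---from stability plus $\Lambda>0$ one has $\lambda_1(L_\Lambda,\Omega)\geq\Lambda$ on every bounded open $\Omega$ with $\overline{\Omega}\subset\Reg(\Sigma)$---combined with the maximum principle would then force $u\geq cw$ for some $c>0$, giving $u\to\infty$ at $x_0$ and the desired contradiction.

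The main obstacle will be constructing the divergent barrier $w$. A natural approach is a blow-up analysis at $x_0$: rescaling by $r_k\to 0$ produces a stable minimal tangent cone $C$ with codimension-$\geq 7$ singular set, on which the Jacobi operator separates in polar coordinates $(r,\omega)$. Stability of $C$ forces the first eigenvalue $\lambda_1$ of $-\Delta_L - |A_L|^2$ on the link $L$ to satisfy $\lambda_1\geq-(n-2)^2/4$, and the corresponding positive homogeneous mode $r^{\alpha}\psi_1(\omega)$ with exponent $\alpha\leq -(n-2)/2<0$ provides the prototype divergent barrier. Transferring this barrier back to $\Sigma$ is delicate, because tangent cones of stable (non-minimizing) minimal hypersurfaces are not known to be unique; I expect the $W^{1,2}$-estimates from the first step to provide enough compactness to pull back the barrier along a convergent subsequence of blow-ups. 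Alternatively, one could bypass the blow-up entirely by solving a family of Dirichlet problems for $L_\Lambda$ on an exhaustion of $B_r(x_0)\cap\Reg(\Sigma)$ by smooth subdomains with blow-up boundary values on shrinking tubular neighborhoods of $\Sing(\Sigma)$, and extracting a limit using the coercivity and the capacity estimate.
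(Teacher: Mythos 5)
Your outline gestures at the right landmarks (capacity-zero singular set, cone spectral analysis yielding a negative homogeneous exponent, a comparison/barrier argument), but it hides the technical heart of the theorem inside two steps that do not work as stated, and those two steps are precisely what the paper's Sections 3 and 4 are built to supply.

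The first gap is the claim that $\liminf u<\infty$ can be ``upgraded to a genuine local upper bound $u\leq M$ by chaining interior Harnack along short paths in $\Reg(\Sigma)$.'' The interior Harnack constant for $\Delta_\Sigma-\Lambda$ on $\Reg(\Sigma)$ depends on the intrinsic geometry of the ball you apply it on, and as you approach $\Sing(\Sigma)$ you have neither a uniform Sobolev constant nor control on how many balls the chain requires; in fact $\Reg(\Sigma)$ near a singular point can fail to be locally uniformly connected, and a priori a chain from a point where $u$ is small to a generic nearby point may not exist at controlled length. Producing a Harnack inequality whose constant is \emph{uniform on balls centered on the singular set} is exactly the content of Proposition \ref{Prop_Multi 1 in smaller scale}, Corollary \ref{Cor_Neum-Sobolev Ineq}, and Corollary \ref{Cor_Harnack Ineq}: one must first rule out multiplicity collapse under blow-up (this uses the non-existence of a positive degree-$1$ homogeneous Jacobi field on a stable minimal cone, Lemma \ref{Lem_Multi 1 cone away from multi>1 vfds}), then run the Bombieri--Giusti Neumann--Sobolev machinery. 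Nothing in your sketch replaces this.

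The second gap is the barrier transfer, which you flag yourself. Because tangent cones at $\Sing(\Sigma)$ are not known to be unique for stable (non-minimizing) hypersurfaces, you cannot simply take the homogeneous mode $r^\gamma\psi_1$ on one blow-up limit and use it as a subsolution on $\Sigma$: the hypersurface may drift between different almost-conical profiles over dyadic scales. The paper handles this by replacing a single barrier comparison with a scale-by-scale growth inequality: on ``good'' scales where $\Sigma$ is $\delta$-close to a stable cone, the Growth Lemma \ref{Lem_Growth Jac Field} gives the gain $\|u\|_{L^1_*(\BB_{\tau r})}\geq \tau^\gamma\|u\|_{L^1_*(\BB_r)}$; on the finitely many ``bad'' scales (bounded via area monotonicity plus Proposition \ref{Prop_Multi 1 in smaller scale}, see equation (\ref{Equ_Bad Scale Uniformly Bounded})), the Harnack inequality provides a fixed multiplicative loss. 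Chaining these gives Proposition \ref{Prop_Growth Rate Lower Bd} directly, and since $\gamma_n<-1<0$, the divergence follows without any contradiction argument or explicit barrier on $\Sigma$. Your Dirichlet-problem alternative runs into the same obstruction: you would need a uniform lower bound on the Dirichlet boundary data and a uniform barrier profile across scales, which again amounts to the Harnack inequality and the no-bad-scales count. Also note the sharp homogeneous exponent is $\gamma_n=-(n-2)/2+\sqrt{(n-2)^2/4-(n-1)}$, coming from the link eigenvalue bound $\mu_1\leq-(n-1)$ (Lawson/Perdomo/Zhu, as in Lemma \ref{Lem_Lower Bound on Reg Scale}), not the Hardy bound $\lambda_1\geq-(n-2)^2/4$ you quote, and the relevant comparison exponent is the larger root $\gamma_n=\alpha^+$, not $\alpha^-$.
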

  Such behavior of $u$ has been established by \cite{Simon08} for minimizing hypersurfaces, or more generally, for submanifolds belonging to a \textit{regular multiplicity 1 class}.  A phenomenon of similar spirit was also exploited in \cite{SchoenYau17_PSC} for minimal slicings and by \cite[Lemma 2.14]{WangZH20_deformations} to study stable minimal hypersurfaces lying on one side of a regular cone near infinity.
  In this paper, we prove it for general stable minimal hypersurfaces, following the strategy of \cite{Simon08} but more directly by first proving a Harnack inequality for stable minimal hypersurfaces.
  
\subsection*{Sketch of the Proof of Theorem \ref{Thm_Intro_MinSmooth} and \ref{Thm_Intro_MeanConvexSmoothing}.}  
  Let $C = \partial E\subset \RR^{n+1}$ be a minimizing hypercone.  First one can approximate $E\cap \SSp^n$ from interior by mean convex domains $\cE_j$ with optimal regularity.  Then consider in $\RR^{n+1}$ the Plateau problem of minimizing area among integral currents with boundary $\partial \cE_j$.  Using a similar argument as \cite{HardtSimon85}, one can show that such area-minimizer $S_j$ is a smooth radial graph over $\cE_j$ and (after subtracting the boundary) lies in the interior of the cone over $\cE_j$.  Moreover, when $j\to \infty$, $S_j$ converges to the truncated cone $C\cap \BB_1$.  Hence if one rescale $S_j$ by their distant to the origin, then the rescaled minimizing hypersurfaces will subconverge to some hypersurface $S$ minimizing in $\RR^{n+1}$ and lying in $E$, with distant $1$ to the origin.  To see $S$ is smooth, consider the Jacobi field $\phi = X\cdot\nu_S$ on regular part of $S$ induced by rescaling, here $X$ is the position vector and $\nu_S$ is the unit normal field of $S$ pointing away from the cone.  $S$ being limit of rescaling of radial graphs $S_j$ guarantees that $\phi\geq 0$;  And $\phi$ can't be identically zero since otherwise $S$ must be a cone and can't have distant $1$ to the origin.  Thus by strong maximum principle, $\phi$ is everywhere positive;  Also by definition, $\phi$ is bounded on each ball in $\RR^{n+1}$.  Therefore, Theorem \ref{Thm_pos super Jac field div} implies that the singular set of $S$ is empty.   
   
  When $E$ is viscosity mean convex and not perimeter minimizing in $E$, one seeks to minimizing $\mbfE$-functional introduced by \cite[Lecture 2 C]{Ilmanen98_LecturesMCF} (where it's called $\mbfK$-functional) to find a self-expander $S'$ asymptotic to $\partial E$ near infinity. However, since the rescaling of a self-expander is usually not a self-expander, one can not use maximum principle to conclude that $S'$ is disjoint from its rescalings. Instead, we consider the level set flow $\bigsqcup_{t\geq 0}D_t\times \{t\} \subset \RR^{n+1}\times \RR$ starting from $(\Int(E))^c$. We still first perturb $E\cap \SSp^n$ into a sequence of optimally regular mean convex domain $\cE_j$ in the interior of $E\cap \SSp^n$, and then solve Plateau problem to find self-expanders $S'_j$ lying on one side of the cone $C_j$ over $\partial \cE_j$, and asymptotic to $C_j$ near infinity.  Then using avoidance principle for weak mean curvature flow, we argue that the rescalings $\lambda\cdot S'_j$ are disjoint from $\Int(D_1)$ for every $\lambda\geq 1$. Then by taking $j\to \infty$, we obtain an $\mbfE$-minimizing self-expander $S' \subset E$ asymptotic to $\partial E$ near infinity, and disjoint from $\Int(D_1)$.  Using definition of level set flow, we conclude that $S'$ must coincide with $\partial D_1$, and the rescalings $\lambda\cdot S'$ are still all disjoint from $\Int(D_1)$, $\forall \lambda\geq 1$.  Moreover, $S'\neq \partial E$ since $\partial E$ is not area-minimizing in $E$. These altogether imply that the eigenfunction $\phi:= X\cdot \nu_{S'} = 2H_{S'}$ of Jacobi operator of $\mbfE$-functional on $S'$ induced by rescaling is non-negative and not vanishing identically.  Repeat the process above, we get $\phi>0$ everywhere and $S'$ has no singularity.  
  
\subsection*{Organization of the Paper.} 
  Section \ref{Sec_Pre} contains the basic notations we use in this paper as well as a brief review of geometric measure theory, geometry of self-expanders and weak notions of mean curvature flow.  In Section \ref{Sec_Harnack Ineq}, we prove a multiplicity $1$ result for stable minimal hypersurfaces, which enable us to derive a Neumann-Sobolev inequality and a Harnack inequality for stable minimal hypersurfaces following the same argument as \cite{BombieriGiusti72_Harnack} for minimizing boundaries.  Using this, in Section \ref{Sec_Growth Rate Lower Bd} we prove a more precise asymptotic lower bound for super-solution of Jacobi field equations on a stable minimal hypersurface, and derive Theorem \ref{Thm_pos super Jac field div} as a corollary.  Finally, we state and prove a more concrete version of Theorem \ref{Thm_Intro_MinSmooth}, \ref{Thm_Intro_MeanConvexSmoothing} and finish the proof of Theorem \ref{Thm_Intro_LawsonConj} in Section \ref{Sec_Pf Main Thm}.
  
\subsection*{Acknowledgement}
  I am grateful to my advisor Fernando Cod\'a Marques for his constant support and guidance.

\section{Preliminaries} \label{Sec_Pre}
  Throughout this paper, let $\RR^{n+1}$ be the Euclidean space of dimension $n+1$, $n\geq 1$; Let
  \begin{itemize}
  \item $\BB_r^{n+1}(x)$ be the open ball of radius $r$ in $\RR^{n+1}$ centered at $x$; We may omit the superscript $n+1$ if there's no confusion about dimension; We may write $\BB_r:= \BB_r(\mathbf{0})$ to be the ball centered at the origin $\mathbf{0}$;
  \item $\SSp^n := \partial \BB_1^{n+1}$ be the unit sphere;
  \item $\AAa(x; r,s)$ be the open annuli $\BB_r(x)\setminus \Clos(\BB_s(x))$ centered at $x$;
  \item $C(\cE):= \{r\omega: \omega\in \cE\subset \SSp^n, r\geq 0\}\subset \RR^{n+1}$ be the cone generated by subset $\cE\subset \SSp^n$;
  \item $\eta_{x, r}$ be the map between $\RR^{n+1}$, maps $y$ to $r(y-x)$; We may omit subscript $x$ if $x=\mathbf{0}$;
  \item $\scH^k$ be the $k$-dimensional Hausdorff measure;
  \item $g_{Euc}$ be the Euclidean metric on $\RR^{n+1}$.
  \end{itemize}
  
  For any smooth oriented Riemannian manifold $(M, g)$ of dimension $n+1$, write 
  \begin{itemize}
  \item $\Int(A)\ $ be the interior of a subset $A\subset M$;
  \item $\Clos(A)\ $ be the closure of a subset $A\subset M$;
  \item $\partial A := \Clos(A)\setminus \Int(A)\ $ be the topological boundary of $A$;
  \item $dist_g\ $ be the distant function over $(M, g)$;
  \item $B_r(A):= \{x\in M: dist_g(x, A)<r\}$ be the open $r$-neighborhood of subset $A\subset M$;
  \item $\nabla_g\ $ (or $\nabla^g$) \ be the Levi-Civita connection with respect to $g$;
  \item $\exp^g_x\ $ be the exponential map of $(M, g)$ on the tangent space $T_x M$;
  \item $injrad(x; M, g)\ $ be the injectivity radius of $(M, g)$ at $x$;
  \item $Ric_g\ $ be the Ricci curvature tensor of $(M, g)$;
  \item $\scX_c(U)\ $ be the space of compactly supported smooth vector field on an open subset $U\subset M$;
  \item $e^{tX}\ $ be the one-parameter family of diffeomorphism generated by $X\in \scX_c(U)$.  
  \end{itemize}
  We may omit the super or subscript $g$ if there's no confusion.  Also for two subsets $U, V \subset M$, write $U\subset \subset V$ if $\Clos(U)$ is a compact subset of $V$. 
  
  For a sequence of closed subset $\{E_j\}_{1\leq j\leq \infty}$ of $M$, call $E_j$ converges to $E_\infty$ \textbf{locally in the Hausdorff distant sense}, if for every compact subset $K\subset M$ and every $\epsilon>0$, there exists $j(\epsilon, K)>>1$ such that $\forall j\geq j(\epsilon, K)$, \[
    E_j\cap K \subset B_\epsilon(E_\infty), \ \ \ \ \ E_\infty\cap K \subset B_\epsilon(E_j).   \]
  
  Let $\Sigma \subset M$ be a two-sided hypersurface, two-sided means it admits a global normal field $\nu$.  When $\Sigma$ is portion of the boundary of some specified domain, unless otherwise mentioned, we use the convention that $\nu$ is chosen to be the outward pointed normal field.   In this article, unless otherwise stated, every hypersurface is assumed to be two-sided and \textbf{optimally regular}, i.e. $\scH^{n-2}(\Clos(\Sigma)\setminus \Sigma) = 0$ and $\scH^n\llcorner \Sigma$ is locally finite.  Let
  \begin{itemize}
  \item $\Reg(\Sigma):= \{x\in \Clos(\Sigma): \Clos(\Sigma) \text{ is a }C^{1,1} \text{ embedded hypersurafce near }x\}$ be the regular part of $\Sigma$;
  \item $\Sing(\Sigma):= \Clos(\Sigma) \setminus \Reg(\Sigma)$ be the singular part.
  \end{itemize}
  By adding points to $\Sigma$ if necessary, we may identify $\Sigma = \Reg(\Sigma)$ in this paper.  Call $\Sigma$ \textbf{regular} in an open subset $U$ if $\Sing(\Sigma)\cap U = \emptyset$.  We shall work with the following function spaces on $\Sigma$:
  \begin{itemize}
  \item $L^p(\Sigma)$ measurable functions $f$ with $\int_\Sigma |f|^p < +\infty$;
  \item $W^{1,2}_{loc}(\Sigma)$ measurable functions which restricts to $W^{1,2}$-function on each compact smooth sub-domain in $\Sigma$;
  \item $C^k_{loc}(\Sigma)$ functions on $\Sigma$ which admit up to $k$-th order continuous derivatives, possibly unbounded on $\Sigma$.
  \end{itemize}
  
  Also write 
  \begin{itemize}
  \item $H_\Sigma := -div_\Sigma (\nu)$ be the scalar mean curvature of $\Sigma$, and $\vec{H}_\Sigma:= H_\Sigma\cdot \nu$ be the mean curvature vector;
  \item $A_\Sigma := -\nabla \nu$ be the second fundamental form of $\Sigma$. 
  \end{itemize}
  Note that under this convention, the mean curvature vector does not depend on the choice of normal field, and the scalar mean curvature for unit sphere with respect to the outward pointed normal field is negative.  
  Recall $\Sigma$ is \textbf{minimal} if and only if $H_\Sigma \equiv 0$.
  
  Call hypersurface $\Sigma\subset \RR^{n+1}$ a \textbf{hypercone} if it's invariant under dilation $\eta_\lambda$, $\forall \lambda >0$.

  A minimal hypersurface $\Sigma \subset (M, g)$ is called \textbf{stable} in an open subset $U$, if \[
    \frac{d^2}{ds^2}\Big|_{s=0} \scH^n(e^{sX}(\Sigma))\geq 0, \ \ \ \forall X\in \scX_c(U).   \]
  By \cite{SchoenSimon81}, this is equivalent to that  $\scH^{n-7 + \epsilon}(\Sing(\Sigma)) = \emptyset$, $\forall \epsilon>0$ and that 
  \begin{align}
   Q_\Sigma(\phi, \phi):= \int_\Sigma |\nabla\phi|^2 - (|A_\Sigma|^2 + Ric_g(\nu, \nu))\phi^2 \geq 0, \ \ \ \forall \phi\in C_c^1(\Sigma\cap U).  \label{Equ_2nd Variation for area in general}
  \end{align}
  Let 
  \begin{align}
   L_\Sigma := \Delta_\Sigma + |A_\Sigma|^2 + Ric_g(\nu, \nu),  \label{Equ_Jac oper in general}
  \end{align}
  be the Euler-Lagrangian operator associated to $Q_\Sigma$, known as the \textbf{Jacobi operator}.  Every $u\in C^2_{loc}(\Sigma)$ solving $L_\Sigma u = 0$ on $\Sigma$ is called a \textbf{Jacobi field}.

  \subsection{Basics in geometric measure theory} \label{Subsec_Basic GMT}
   We recall some basic notions from geometric measure theory and refer the readers to \cite{Simon83_GMT, Federer69} for details.  For $1\leq k\leq n+1$, in an $n+1$ dimensional manifold $(M, g)$ (not necessarily complete, isometrically embedded in some $\RR^L$ if necessary), write
   \begin{itemize}
   \item $\mbfI_k(M)$ be the space of integral $k$ currents on $M$;
   \item $\cZ_k(M):= \{T\in \mbfI_k(M): \partial T = 0\}$ be the space of integral $k$-cycles, where $\partial: \mbfI_k \to \mbfI_{k-1}$ be the boundary operator;
   \item $\cI\cV_k(M)$ be the space of integral $k$ varifolds on $(M, g)$;
   \item $\spt(V)$ be the support of a varifold $V\in \cI\cV_k(M)$;
   \item $f_\sharp$ be the push forward of currents or varifolds associated to a proper Lipschitz map $f$. 
   \end{itemize}
   For a hypersurface $\Sigma\subset (M, g)$, let $|\Sigma|_g\in \cI\cV(M)$ be the integral varifold associated to $\Sigma$; If further $\Sigma$ is oriented, denote $[\Sigma]\in \mbfI_n(M)$ to be the associated integral current.  More generally, for $T\in \mbfI_n(M)$, denote $|T|_g$, $\|T\|_g$ to be the associated integral $n$-varifold and Radon measure on $M$ correspondingly. The metric subscript $g$ above will be omit if there's no confusion.
   
   For an open subset $U\subset M$, let $\cF_U$ and $\mbfF_U$ be the flat metric on $\mbfI_n(M)$ and varifold metric on $\cI\cV_n(M)$ in $U$, and let $\mbfM_U$ be the mass for an integral current;  Omit $U$ if $U = M$. 
   
   For a smooth vector field $X\in \scX_c(U)$ and $V\in \cI\cV_n(U)$, the first variation of the area of $V$ with respect to $X$ is \[
     \delta V(X) = \frac{d}{dt}\Big|_{t=0}\|V\|(U) = \int div^\pi X\ dV(x, \pi),   \]
   Recall $V$ is called \textbf{stationary} in $U$ if and only if $\delta V(X) =0$, $\forall X\in \scX_c(U)$.
   By constructing appropriate test vector fields, whenever $V$ is stationary in $M$, for every $x\in M$, the following \[
     r\mapsto \theta_g(x, r; \|V\|):= \frac{e^{n\lambda r}}{\omega_n r^n}\|V\|(B_r(x))   \]
   is monotone non-decreasing in $r\in (0, injrad(x; M, g))$, where \[
     \lambda = \lambda(x, r):= \inf \{C\geq 0: \nabla_g^2(\rho_x^2) \geq 2(1-C\rho_x^2)\text{ as a quadratic form, on }B_r(x)\};  \]
   where $\rho_x:= dist_g(\cdot, x)$ is the distant function to $x$.   
   And when $g = g_{Euc}$, $\theta_g(x, r; \|V\|)$ is constant in $r$ if and only if $V$ is a cone.  
   Also, denote $\theta(x; \|V\|):= \lim_{r\searrow 0} \theta_g(x, r; \|V\|)$ to be the density of $V$ at $x$.\\

   Following \cite{DeGiorgi61, Simon83_GMT}, call a Lebesgue measurable subset $P\subset M$ a \textbf{Caccioppoli set}, if for every open subset $W\subset \subset M$ and every smooth vector field $X\in \scX_c(W)$, there exists some constant $C(P, W)$ such that \[
     |\int_P div (X)\ d\scH^{n+1}| \leq C(P, W)\|X\|_{C^0}.   \]
   By \cite{DeGiorgi61, Simon83_GMT}, a Caccioppoli set $P$ is naturally an integral $n+1$-current $[P] \in \mbfI_{n+1}(M)$. $\|\partial [P]\|(W)$ is called the \textbf{perimeter} of $P$ in $W$, and is usually denoted as $\cP(P; W)$.  Also define the $\mbfF$-metric among Caccioppoli sets by \[
     \mbfF(P_1, P_2):= \cF(\partial [P_1], \partial [P_2]) + \mbfF(\|\partial [P_1]\|, \|\partial [P_2]\|).   \]
   
   When $E\subset M$ is a measurable subset, call a Caccioppoli set $P\subset M$ (homologically) perimeter minimizing in $E$ if $P \subset E$ and for every open subset $W'\subset \subset W\subset M$ and every Caccioppoli set $Q\subset M$ with $P\Delta Q\subset E\cap W'$, we have \[
     \cP(P; W) \leq \cP(Q; W).   \]
   If $E\subset M$ is a closed subset, $P \subset E$ is minimizing in $E$ and $|\partial [P]|$ is stationary in $M$, then the regular part of $\partial [P]$ is a multiplicity one and stable; Being minimizing in $E$ guarantees that there's no singularity of $\partial [P]$ near which $\spt(\partial [P])$ is a union of embedded $C^{1,\alpha}$ hypersurface-with-boundary meeting along their common boundary.  Hence, by \cite[Theorem 18.1]{Wickramasekera14_RegStableMH}, $\Reg(\partial [P])$ is a stable minimal hypersurface in $M$ with optimal regularity. 
   
   The compactness of perimeter minimizing Caccioppoli sets in closed subsets is a little bit subtle, due to the lack of direct cut-pasting argument as in \cite[Theorem 34.5]{Simon83_GMT}. 
   \begin{Lem}[Compactness] \label{Lem_Cptness of perimeter minzer in E_j}
    Let $\{E_j\}_{1\leq j\leq \infty}$ be a sequence of closed subset in $M$ such that $E_j \to E_\infty$ and $\partial E_j \to \partial E_\infty$ both in locally Hausdorff distant sense in $M$. Let $\{g_j\}_{1\leq j\leq \infty}$ be a sequence of smooth metric on $M$ such that $g_j$ smoothly converges to $g_\infty$.
    For $1\leq j<\infty$, let $P_j\subset E_j$ be a Caccioppoli set in $M$ minimizing perimeter in $E_j$, with $|\partial [P_j]|$ stationary in $M$.  

    Then there exists some Caccioppoli set $P_\infty \subset E_\infty$ with $\Reg(\partial [P_\infty])$ an optimally regular stable minimal hypersurface in $M$, such that after passing to a subsequence of $j\to \infty$, $\mbfF(P_j, P_\infty) \to 0$.
    
    Moreover, if $E_j \equiv E_\infty$, then $P_\infty$ is also perimeter minimizing in $E_\infty$.
   \end{Lem}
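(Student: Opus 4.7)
First, I would derive uniform local perimeter bounds on $\{P_j\}$. Since $P_j \setminus \bar B_r(x) \subset P_j \subset E_j$ differs from $P_j$ only inside $B_r(x)$, the minimizing property yields
\[ \cP(P_j; B_{2r}(x)) \leq \cP(P_j \setminus \bar B_r(x); B_{2r}(x)) \leq \cP(P_j; B_{2r}(x) \setminus \bar B_r(x)) + \scH^n_{g_j}(\partial B_r(x) \cap P_j), \]
so a Fubini choice of $r$ gives $\cP(P_j; B_r(x)) \leq C(g_\infty) r^n$ uniformly in $j$. Combined with the assumed stationarity of each $|\partial[P_j]|$ in $(M,g_j)$ and smooth convergence $g_j \to g_\infty$, this furnishes uniform monotonicity estimates on compact subsets of $M$.

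Second, standard BV compactness yields, along a subsequence, a Caccioppoli set $P_\infty \subset M$ with $\chi_{P_j} \to \chi_{P_\infty}$ in $L^1_{loc}$ and $[P_j] \to [P_\infty]$ as integral currents; Allard's compactness for stationary integer-rectifiable varifolds then gives, along a further subsequence, $|\partial[P_j]| \to V$ as varifolds, with $V$ stationary integer-rectifiable in $(M, g_\infty)$ and $|\partial[P_\infty]| \leq V$ by lower semicontinuity. To verify $P_\infty \subset E_\infty$ modulo a null set, at each Lebesgue density-one point $x \in P_\infty$ and each $\epsilon > 0$ one finds $y_j \in P_j \cap B_\epsilon(x) \subset E_j$ for $j$ large; Hausdorff convergence $E_j \to E_\infty$ forces $\mathrm{dist}_{g_\infty}(y_j, E_\infty) \to 0$, so $E_\infty \cap B_{2\epsilon}(x) \neq \emptyset$, and letting $\epsilon \searrow 0$ gives $x \in E_\infty$ since $E_\infty$ is closed.

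Third, in the case $E_j \equiv E_\infty =: E$, minimality of $P_\infty$ in $E$ follows from the usual cut-and-paste. Given a competitor $Q \subset E$ with $P_\infty \Delta Q \subset W' \cap E$, $W' \subset\subset W$, choose by Fubini an intermediate $W' \subset\subset W'' \subset\subset W$ with $\|\partial[P_\infty]\|(\partial W'') = 0$ and, along a further subsequence, $\|\partial[P_j]\|(\partial W'') \to 0$. Then $Q_j := (Q \cap W'') \cup (P_j \setminus W'')$ is a Caccioppoli subset of $E$ with $P_j \Delta Q_j \subset W''$, so $\cP(P_j; W) \leq \cP(Q_j; W)$ by minimality of $P_j$; passing $j \to \infty$ with lower semicontinuity on the left and vanishing boundary contribution on $\partial W''$ on the right yields $\cP(P_\infty; W) \leq \cP(Q; W)$.

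Finally, for the regularity claim, each $|\partial[P_j]|$ is, by minimizing in $E_j$ combined with stationarity in $M$, an optimally regular stable minimal hypersurface in $(M, g_j)$ as discussed in the preliminaries. The varifold limit $V$ is therefore a stationary integer-rectifiable varifold arising as a (possibly higher-multiplicity) limit of optimally regular stable minimal hypersurfaces, so Wickramasekera's regularity theorem gives that $\Sing(V)$ has Hausdorff codimension $\geq 7$ and $\Reg(V)$ is an optimally regular stable minimal hypersurface. To identify $\Reg(\partial[P_\infty])$ with a portion of $\Reg(V)$ carrying multiplicity one, near any $C^{1,1}$ point of $\spt(\partial[P_\infty])$ the varifold $V$ decomposes locally as $m$ smooth sheets, and the $L^1_{loc}$ limit $\chi_{P_\infty}$ takes only two values across these sheets, forcing $m=1$ on the essential boundary of $P_\infty$. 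I expect this multiplicity-one identification to be the main obstacle in the general case $E_j \not\equiv E_\infty$: one cannot cut-paste to force mass convergence $\|\partial[P_j]\| \to \|\partial[P_\infty]\|$ as in the equal-obstacle setting, and the matching of multiplicities must be extracted from the sheeting structure and the $L^1_{loc}$ limit of the characteristic functions alone.
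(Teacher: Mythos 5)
You correctly set up the compactness (perimeter bounds, BV/varifold limits), and your cut-and-paste argument for the $E_j \equiv E_\infty$ case matches the paper. The genuine gap, which you flag yourself as ``the main obstacle,'' is the multiplicity-one identification $V = |\partial[P_\infty]|$ — and without it, the stated $\mbfF$-convergence (which bundles varifold convergence $\|\partial[P_j]\|\to\|\partial[P_\infty]\|$) does not follow. Your heuristic that ``$\chi_{P_\infty}$ takes only two values across the sheets, forcing $m=1$ on the essential boundary'' does not work: if $m$ sheets collapse, the slabs between them vanish in $L^1_{loc}$, so $\chi_{P_\infty}$ can jump once across $m$ odd $\geq 3$ sheets, or not jump at all across $m$ even sheets — in which case $V$ carries mass not seen by $\partial[P_\infty]$. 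Neither possibility is excluded by your argument.

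The paper closes this gap with Schoen--Simon's sheeting theorem and a case split that crucially exploits the hypothesis $\partial E_j \to \partial E_\infty$ in local Hausdorff distance — a hypothesis your proposal never invokes, even though it is the decisive one. Suppose $x$ is a regular point of $V$ of multiplicity $\geq 2$; by \cite[Theorem 1]{SchoenSimon81} the $\partial P_j$'s are multiple graphs near $x$. If $x \in \Int(E_\infty)$, then $\partial E_j$ is eventually far from $x$, so $P_j$ is an \emph{unconstrained} perimeter minimizer near $x$; a minimizing boundary has no higher-multiplicity limits, contradiction. If $x \in \partial E_\infty$, two adjacent sheets bound a thin slab inside (or outside) $P_j$ contained in $E_j$; deleting (or filling in) a small ball in that slab produces a competitor still contained in $E_j$, contradicting perimeter-minimality of $P_j$ in $E_j$. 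Once multiplicity one is known, Schoen--Simon $\epsilon$-regularity upgrades the current convergence to varifold convergence with $V = |\partial[P_\infty]|$. This is the missing step; both the case split and the explicit use of $\partial E_j \to \partial E_\infty$ need to appear in a complete proof.
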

   \begin{proof}
    For each smooth domain $W\subset\subset M$, by comparing the perimeter of $P_j$ with $P_j\setminus W$, we know that the $\limsup \|\partial [P_j]\|(W) <+\infty$.  Hence by Fleming-Federer compactness Theorem, $\partial [P_j]$ subconverges in flat topology to some boundary of Caccioppoli set $\partial [P_\infty]$; And by \cite{SchoenSimon81}, the stable minimal hypersurfaces $|\partial [P_j]|$ subconverges to some stationary integral varifold $V_\infty$ supported on a stable minimal hypersurace in $E_\infty$.  
    
    Moreover, $V_\infty$ must be of multiplicity $1$, since otherwise, by \cite[Theorem 1]{SchoenSimon81}, near some regular point $x$ of higher multiplicity, $\|\partial [P_j]\|$ are multiple graphs over some hyperplane through $x\in \spt(V_\infty) \subset E_\infty$. When $x\in \Int(E_\infty)$, since $\partial E_j$ converges to $\partial E_\infty$ locally in the Hausdorff distant sense, this is impossible by the local perimeter-minimizing property of $P_j$ near $x$; When $x\in \partial E_\infty$, we know that for $j>>1$, there must be two adjacent graphs bounding a slab in $P_j$ near $x$.  This violates that $P_j$ is perimeter-minimizing in $E_j$ by subtracting a small ball from this slab.
    
    Using Schoen-Simon's epsilon regularity theorem \cite[Theorem 1]{SchoenSimon81}, $V_\infty$ being of multiplicity $1$ implies that $V_\infty = |\partial [P_\infty]|$, hence $P_j$ subconverges to $P_\infty$ in $\mbfF$-metric.
    
    When $E_j \equiv E_\infty$, the cut-pasting argument in \cite[Theorem 34.5]{Simon83_GMT} works here to show that $P_\infty$ is perimeter minimizing in $E_\infty$.
   \end{proof}

   \begin{Def} \label{Def_Viscosity mean convex}
    Call a closed subset $E\subset M$ \textbf{viscosity mean convex}, if $E = \Clos(\Int(E))$ and for every smooth open domain $\Omega \subset E$ with $x\in \partial \Omega\cap \partial E$, the mean curvature with respect to the outward pointed normal field of $\partial \Omega$ at $x$ is $H_{\partial \Omega}(x)\leq 0$.
    
    Call a closed viscosity mean convex subset $E\subset M^{n+1}$ \textbf{$C^{1,1}$-optimally regular}, if there exists a closed subset $\cS \subset \partial E$ with Hausdorff dimension $\leq n-7$, such that $\partial E \setminus S$ is a $C^{1,1}$ embedded hypersurface and $\forall x\in \cS$, $\partial E$ is a stable minimal hypersurface with optimal regularity near $x$.  Call the smallest such $\cS \subset \partial E$ the \textbf{singular set} of $\partial E$.
   \end{Def}
   Without assuming $E = \Clos(\Int(E))$, the notion of viscosity mean convex was introduced and studied in \cite[Definition 3.1]{IlmanenSternbergZiemer98}, where it was called \textit{barrier for minimal surface equation}. 
   \begin{Rem} \label{Rem_Viscosity mean convex}
   \begin{enumerate} [(1)]
   \item  By \cite{SolomonWhite89_Maxim}, the support of every Caccioppoli set in $M$ with stationary boundary is viscosity mean convex.  In particular, every stable minimal hypercone in $\RR^{n+1}$ bounds a viscosity mean convex subset.
   \item  It follows directly from the definition that if $E\subset M$ is a viscosity mean convex closed subset, then for every connected component $U$ of $\Int(E)$, $\Clos(U)$ is also viscosity mean convex.  Hence, in the following discussion, we may additionally assume that a viscosity mean convex subset has connected interior.
   \item  When a closed viscosity mean convex set $E\subset M$ is $C^{1,1}$-optimally regular, on its regular part, we can define the mean curvature vector $\vec{H}_{\partial E} \in L^\infty(\partial E)$ almost everywhere. And $E$ being viscosity mean convex guarantees that $\langle\vec{H}_{\partial E}, \nu \rangle \leq 0$, where $\nu$ is the outward pointed normal field.   
   In Appendix \ref{Sec_Minz w obstacle}, it is shown that any compact viscosity mean convex subset can be approximated by $C^{1,1}$-optimally regular mean convex subset.
   \end{enumerate}
   \end{Rem}
   
  \subsection{Self-expanders} \label{Subsec_Self-expander}
  For a hypersurface $\Sigma\subset \RR^{n+1}$ and a bounded open subset $W\subset \RR^{n+1}$, define the $\mbfE$-functional of $\Sigma$ in $W$ to be, \[
    \mbfE[\Sigma; W]:= \int_W e^{|x|^2/4}\ d\|\Sigma\|(x).   \]
   
  Such $\mbfE$-functional is introduced in \cite[Lecture 2 C]{Ilmanen98_LecturesMCF} in which it's named $\mbfK$-functional, and frequently studied by \cite{BernsteinWang18_Degree_SelfExpander, BernsteinWang19_RelEntropy_SelfExpander, BernsteinWang19_TopUniq_SelfExpander, BernsteinWang2020_Minmax_SelfExpander, BernsteinWang21_Cptness_SelfExpander, BernsteinWang21_Space_SelfExpander} and by \cite{Ding20_MinCone_SelfExpander}. Here we keep the notations with Bernstein-Wang.  Critical points $\Sigma$ of $\mbfE$ are known to be \textbf{self-expanders}, i.e. it satisfies the following equation \[
    \vec{H}_\Sigma - \frac{X^\perp}{2} = 0,   \]
  where $X^\perp$ denotes the projection of position vector onto normal direction of $\Sigma$. Equivalently, by \cite{EckerHuisken89_EntireGraph}, $\{\eta_{\sqrt{t}}(\Sigma)\}_{t>0}$ is a family of hypersurfaces flowing by mean curvature.

  Clearly, the $\mbfE$-functional can be extended to any Radon measure $\mu$ on $\RR^{n+1}$, \[
    \mbfE[\mu; W]:= \int_W e^{|x|^2/4}\ d\mu(x).   \]
  We also denote for simplicity that for a Caccioppoli sets $P\subset \RR^{n+1}$, $\mbfE[P; W]:= \mbfE[\|\partial [P]\|; W]$ and for an $n$-varifold $V$, $\mbfE[V; W]:= \mbfE[\|V\|; W]$.
  
  For a bounded open subset $W\subset \RR^{n+1}$, call a Caccioppoli set $P \subset \RR^{n+1}$ \textbf{$\mbfE$-minimizing} in $W$, if for every Caccioppoli set $Q\Delta P \subset \subset W'\subset \subset W$, we have \[
    \mbfE[Q; W'] \geq \mbfE[P; W'].   \]
  And call an integral $n$-varifold $V\in \cI\cV_n(W)$ $\mbfE$-\textbf{stationary} in $W$, if for every $X\in \scX_c(W)$,  \[
    \delta\mbfE[V; W](X):= \frac{d}{dt}\Big|_{t=0} \mbfE[e^{tX}{ }_\sharp V; W] = 0;   \] 
  Call $V$ $\mbfE$-\textbf{stable} if it's $\mbfE$-stationary and for every $X\in \scX_c(W)$, \[
    \delta^2\mbfE[V; W](X, X):= \frac{d^2}{dt^2}\Big|_{t=0} \mbfE[e^{tX}{ }_\sharp V; W] \geq 0.   \]
  Note that for an $n$-varifold, $\mbfE$-functional is the $n$-th area-functional under the metric $\hat{g}:= e^{|x^2|/2n}\cdot g_{Euc}$ on $\RR^{n+1}$. Hence, being $\mbfE$-stationary (resp. $\mbfE$-stable, $\mbfE$-minimizing) is equivalent to being stationary (resp. stable, minimizing) with respect to the area functional under $\hat{g}$.
  
  For a 2-sided hypersurface $\Sigma\subset W$, by \cite[(3.16)]{Ding20_MinCone_SelfExpander}, being $\mbfE$-stable is equivalent to that for every $\phi\in C_c^1(\Sigma\cap W)$, 
  \begin{align}
   \int_W [|\nabla \phi|^2 - (|A_\Sigma|^2-\frac{1}{2})\phi^2]\cdot e^{|x|^2/4}d\|\Sigma\|(x) \geq 0.   \label{Equ_Stability Ineq for mbfE}
  \end{align}
  The Euler-Lagrangian operator of the quadratic form on the left hand side of (\ref{Equ_Stability Ineq for mbfE}) is,
  \begin{align}
   \cL_\Sigma := \Delta_\Sigma + \frac{X}{2}\cdot \nabla_\Sigma + |A_\Sigma|^2 -\frac{1}{2}.  \label{Equ_Jac oper wrt mbfE-functional}
  \end{align}
  Moreover,  if denote $\hat{\Delta}_\Sigma, \hat{A}_\Sigma$ to be the laplacian and second fundamental form of $\Sigma$ under metric $\hat{g}$, then a simple computation shows that for every function $u$ on $\Sigma$,
  \begin{align}
   \cL_\Sigma u = e^{\frac{|x|^2}{4n}}\cdot\big(\hat{\Delta}_\Sigma + |\hat{A}_\Sigma|^2 - e^{-\frac{|x|^2}{2n}}(1+\frac{n-1}{4n^2}|X^\Sigma|^2)\big)(e^{\frac{|x|^2}{4n}}u),  \label{Equ_Jac oper of mbfE vs Jac oper under hat(g)}
  \end{align}
  where $X^\Sigma$ is the projection of the position vector $X$ onto $T\Sigma$. \\

  Let $\cE\subset \SSp^n$ be a closed subset; Let $E = C(\cE)$ be the closed cone over $\cE$ in $\RR^{n+1}$.  Call a closed set $P\subset \RR^{n+1}$ \textbf{distant asymptotic to $E$} near infinity if locally in the Hausdorff distant sense, when $R\to +\infty$, $\eta_{1/R}(P) \to E$ and $\eta_{1/R}(\partial P) \to \partial E$.
  
  For a unit vector $v\in \SSp^n$ and $\alpha\in (0, \pi)$, define \[
   \cC(v; \alpha):= \{x\in \RR^{n+1}\setminus \{\mathbf{0}\}: \langle x/|x|, v \rangle > \cos \alpha \},    \]
  be the open cone in $v$ direction with open angle $\alpha$. $O(n)$-invariant self-expanders distant asymptotic to $\Clos(\cC(v; \alpha))$ are constructed in \cite[Lemma 2]{AngenentIlmanenChopp95_Eg}, and also studied using variational approach in \cite[Section 4]{Ding20_MinCone_SelfExpander}.  We summarize the results here.  By a rotation, WLOG $v= e_{n+1}$.
  \begin{Thm} \cite{AngenentIlmanenChopp95_Eg, Ding20_MinCone_SelfExpander} \label{Thm_Rot sym self-expander}
   For each $\alpha \in (0, \pi/2)$, there exists a unique $O(n)$-symmetric convex closed smooth domain $F(e_{n+1}; \alpha) \subset \cC(e_{n+1}, \alpha)$ minimizing $\mbfE$-functional in $\RR^{n+1}$ and distant asymptotic to $\Clos(\cC(e_{n+1}; \alpha))$ near infinity.  
   
   Moreover, $F(e_{n+1}, \alpha)$ is an $O(n)$-symmetric super graph, i.e. there exists $u_\alpha \in C^\infty([0, +\infty))$ such that \[
     F(e_{n+1}, \alpha) = \{(x, z): x\in \RR^n, z\geq u_\alpha(|x|)\},   \]
   and $u_\alpha$ satisfies 
   \begin{enumerate} [(i)]
   \item $u_\alpha(0)>0$, $u_\alpha'(0) = 0$, $u_\alpha'(r), u_\alpha''(r)>0$ for $r>0$ and $u(r)/r \to \cot \alpha$ when $r\to +\infty$;
   \item $u_\alpha$ is continuous and strictly monotone in $\alpha$ and for fixed $r\geq 0$, \[
    u_\alpha(r)\to +\infty \ \text{ when }\alpha \to 0, \ \ \ \ \  u_\alpha(r)\to 0 \ \text{ when }\alpha \to \pi/2.   \]
    In particular, $\{\partial F(e_{n+1}, \alpha)\}_{\alpha \in (0, \pi/2)}$ foliates $\RR^n\times (0, +\infty)$.
   \end{enumerate}
  \end{Thm}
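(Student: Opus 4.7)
The plan is to combine the shooting/ODE approach of \cite{AngenentIlmanenChopp95_Eg} with the variational approach of \cite{Ding20_MinCone_SelfExpander}. First, for existence I would proceed variationally: since $\mbfE$ is the $n$-dimensional area under the conformal metric $\hat g := e^{|x|^2/2n} g_{Euc}$, the $\mbfE$-minimizing problem is a constrained Plateau problem. For $R \gg 1$, minimize $\mbfE[P; B_R]$ among Caccioppoli sets $P \subset \cC(e_{n+1}; \alpha)$ with $P \setminus B_R = \cC(e_{n+1}; \alpha) \setminus B_R$. Using $O(n)$-symmetrization (valid since $\cC(e_{n+1}; \alpha)$ is $O(n)$-invariant and convex) I may assume the minimizer $P_R$ is rotationally symmetric, hence locally a radial graph $z = u_R(r)$ away from the axis. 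Schoen-Simon regularity plus standard interior estimates give smoothness of $\partial P_R$ in the interior. Letting $R \to \infty$, an $\mbfE$-version of Lemma \ref{Lem_Cptness of perimeter minzer in E_j} produces a limit $F(e_{n+1}; \alpha) \subset \cC(e_{n+1}; \alpha)$ that is $\mbfE$-minimizing and $O(n)$-invariant.

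Next, to see $F(e_{n+1}; \alpha)$ is a smooth super-graph satisfying (i), I would plug the ansatz $z = u(r)$ with $r = |x|$ into the self-expander equation $\vec{H} - X^\perp/2 = 0$, obtaining the ODE
\begin{equation*}
 \frac{u''}{1 + (u')^2} + \frac{(n-1) u'}{r} = \frac{u - r u'}{2}.
\end{equation*}
With $u_\alpha(0) > 0$ and $u_\alpha'(0) = 0$ (forced by smoothness on the axis), l'H\^opital on the ODE at $r = 0$ gives $u_\alpha''(0) = u_\alpha(0)/(2n) > 0$. Differentiating the ODE once, $v := u_\alpha'$ satisfies a linear second-order equation, and initial data $v(0) = 0$, $v'(0) > 0$ together with a standard comparison argument force $v > 0$ and $v' > 0$ on $(0, \infty)$, i.e. $u_\alpha' > 0$ and $u_\alpha'' > 0$. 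The asymptotic $u_\alpha(r)/r \to \cot \alpha$ is forced by the distant-asymptotic condition $\eta_{1/R}(F) \to \Clos(\cC(e_{n+1}; \alpha))$.

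For uniqueness and the strict monotonicity in (ii), the main tool is the strong maximum principle for the self-expander equation together with an avoidance/sliding argument. Given two $\mbfE$-minimizers $F, F'$ with the same asymptotic cone, translate $F'$ upward until $F' \subset F$, then slide down; the matched asymptotics at infinity force a first touching in the interior, contradicting strong maximum principle. The same argument applied to $\alpha_1 < \alpha_2$ yields $u_{\alpha_1} > u_{\alpha_2}$ everywhere. Continuity in $\alpha$ follows from uniqueness plus the compactness argument. To finish the foliation statement, I would verify $u_\alpha(r) \to 0$ as $\alpha \to \pi/2$ using the half-space $\{z \geq 0\}$ as a barrier self-expander that becomes arbitrarily close to $\cC(e_{n+1}; \alpha)$, and $u_\alpha(r) \to +\infty$ as $\alpha \to 0$ by rescaling: any subsequential limit of $F(e_{n+1}; \alpha_j)$ translated to fix $r = r_0$ would be a non-trivial self-expander supported inside the ray $\{t e_{n+1} : t \geq 0\}$, impossible for a hypersurface.

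The main obstacle I anticipate is the non-compactness at infinity: one must rule out loss of mass as $R \to \infty$ and control the limit $\alpha \to 0, \pi/2$ quantitatively. This requires explicit weighted sub/super-solutions (Angenent-Ilmanen-Chopp construct these using Huisken's monotonicity formula; Ding uses barriers adapted to the $\mbfE$-functional) giving definite rates at which $u_\alpha(r) - r \cot \alpha$ decays, so that the shooting parameter $u_\alpha(0)$ depends continuously and surjectively on $\alpha \in (0, \pi/2)$. The rest of the argument is then either classical ODE analysis or standard elliptic maximum principle machinery.
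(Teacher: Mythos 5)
Your overall architecture is reasonable, and several pieces match the paper's approach (monotone sliding via strong maximum principle for strict monotonicity in $\alpha$; using uniqueness to get continuity; compactness of minimizing boundaries to see $u_\alpha(0)\to\infty$ as $\alpha\to 0$ since the degenerate cone is a ray). But note that the paper does \emph{not} reprove existence, uniqueness, or property (i): those are cited outright from \cite{AngenentIlmanenChopp95_Eg, Ding20_MinCone_SelfExpander}, together with \cite[Lemma 3.1]{Ding20_MinCone_SelfExpander} for the fact that entire graphs are $\mbfE$-minimizing. What the paper actually proves from scratch is only (ii). Your long variational + ODE rederivation of (i) is fine in principle but is doing much more than the paper does.

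The one genuine gap is your argument that $u_\alpha(r)\to 0$ as $\alpha\to\pi/2$. You propose to ``use the half-space $\{z\geq 0\}$ as a barrier self-expander that becomes arbitrarily close to $\cC(e_{n+1};\alpha)$.'' This does not work: the only horizontal hyperplane that is a self-expander is $\{z=0\}$, and slid translates $\{z=c\}$, $c>0$, are \emph{not} self-expanders, so they cannot be used in a sliding/strong-maximum-principle argument against $\partial F(e_{n+1};\alpha)$. Since $\partial F(e_{n+1};\alpha)$ already lies strictly above $\{z=0\}$, the fixed barrier $\{z=0\}$ gives no new information; you would need a barrier approaching $\partial F$ from below, but there is no family of exact self-expanders between $\{z=0\}$ and $\partial F(e_{n+1};\alpha)$ that you can invoke a priori. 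The paper's proof instead extracts the quantitative gradient bound $0\leq u_\alpha'<\cot\alpha$ from property (i) (convexity and the asymptotic slope): since $\cot\alpha\to 0$, the monotone limit of $u_\alpha$ has vanishing derivative, hence is a constant; a constant-height graph is a self-expander only if the constant is $0$. That gradient bound is the missing ingredient in your argument, and without it the limit could conceivably be a nonconstant convex function.
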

  \begin{proof}
   For each $\alpha\in (0, \pi/2)$, in \cite{AngenentIlmanenChopp95_Eg} and \cite{Ding20_MinCone_SelfExpander}, a unique $O(n)$-invariant entire convex graph $S_\alpha\subset \cC(e_{n+1}, \alpha)$ solving self-expanding equation and asymptotic to $\partial \cC(e_{n+1}; \alpha)$ is constructed, where the graphical function $u_\alpha$ satisfies (i). 
   Moreover, the uniqueness of $S_\alpha$ guarantees they varies continuously in $\alpha$.  Let $F(e_{n+1}, \alpha)$ be the closed domain above $S_\alpha$.
   
   As an entire graph, by \cite[Lemma 3.1]{Ding20_MinCone_SelfExpander}, $S_\alpha$ are $\mbfE$-minimizing in $\RR^{n+1}$. Since when $\alpha\searrow 0$, $\Clos(\cC(e_{n+1}, \alpha))\to \{r\cdot e_{n+1}: r\geq 0\}$ locally in the Hausdorff distant sense, by compactness of minimizing boundary we must have, $dist(\mathbf{0}, S_\alpha)\to +\infty$ when $\alpha \searrow 0$, hence for each $r\geq 0$, $u_\alpha (r)\to +\infty$ when $\alpha \searrow 0$.  This also implies that for each $\alpha\in (0, \pi/2)$, when $\alpha'$ sufficiently close to $0$, $S_{\alpha'}\subset  F(e_{n+1}, \alpha)$.  Let $\bar{\alpha}:= \sup\{\beta\in (0, \alpha]: S_t\subset F(e_{n+1}, \alpha) \text{ for every }t\in (0, \beta)\}$. By strong maximum principle for $\mbfE$-functional, $\bar{\alpha} = \alpha$ and $S_t\subset \Int F(e_{n+1},\alpha)$ whenever $t\in (0, \alpha)$.  In other words, $u_\alpha$ is strictly monotone in $\alpha$.
   
   Note that (i) in the Theorem also implies that $0\leq u_\alpha ' < \cot\alpha$ on $[0, +\infty)$ and is monotone increasing.  Hence when $\alpha\nearrow \pi/2$, $u_\alpha$ converges to a constant function whose graph is also a self-expander, hence must be $0$.  This finishes the proof of (ii).
  \end{proof}
  
  For every unit vector $v\in \SSp^n$ and $\alpha\in (0, \pi/2)$, let $T\in O(n+1)$ be an orthogonal transform mapping $e_{n+1}$ to $v$, and denote $F(v; \alpha):= T(F(e_{n+1}, \alpha)) \subset \cC(v; \alpha)$ to be the domain with $\mbfE$-minimizing boundary distant asymptotic to $\cC(v; \alpha)$ near infinity, where $F(e_{n+1}, \alpha)$ is given by Theorem \ref{Thm_Rot sym self-expander}.
  This family of self-expanders are good barriers for general asymptotic conic self-expanders.
  
  \begin{Cor} \label{Cor_Two-side barrier for self-expander}
   Let $\cE\subset \SSp^n$ be a closed subset such that $\Clos(\Int(\cE)) = \cE$, let $0<R\leq +\infty$.  Suppose $F = \spt[F]\subset \RR^{n+1}$ is a closed subset distant asymptotic to $C(\cE)$ near infinity with $|\partial [F]|$ being $\mbfE$-stationary in $(\RR^{n+1}\setminus \partial C(\cE))\cup \BB_R$ and coincide with $C(\cE)$ outside $\BB_R$, i.e. $F\setminus \BB_R = C(\cE)\setminus \BB_R$. (We use the convention that $\BB_R = \RR^{n+1}$ if $R=+\infty$.)
   Then, 
   \begin{align*}
   \begin{cases}
    F(v, \alpha) \subset F, &\ \text{ if }\ \cC(v; \alpha)\cap \SSp^n \subset \cE;\\
    F(v, \alpha) \cap F = \emptyset, &\ \text{ if }\ \Clos(\cC(v; \alpha)\cap \SSp^n) \cap \cE = \emptyset.
   \end{cases}
   \end{align*}
  \end{Cor}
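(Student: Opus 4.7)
The plan is to use the smooth, monotonically nested foliation $\{\partial F(v,\beta)\}_{\beta\in (0,\pi/2)}$ from Theorem \ref{Thm_Rot sym self-expander}, with each $F(v,\beta)$ distant asymptotic to $\Clos\cC(v,\beta)$, as a one-parameter family of $\mbfE$-minimizing barriers, sliding $\beta$ by continuity. The two essential ingredients are the strong maximum principle for $\mbfE$-minimal hypersurfaces (equivalently, minimal hypersurfaces under the conformal metric $\hat g = e^{|x|^2/2n} g_{Euc}$, combined with the Solomon--White viscosity mean convexity \cite{SolomonWhite89_Maxim} and an Allard-type regularization at possibly singular touching points), and the quantitative Hausdorff asymptotic comparison coming from $\eta_{1/R}(F)\to C(\cE)$ and $\eta_{1/R}(\partial F)\to\partial C(\cE)$.

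For case (i), set $\beta_0 := \inf\{\beta\in(0,\alpha]: F(v,\beta)\not\subset F\}$ with the convention $\inf\emptyset=\alpha$. It suffices to show $\beta_0=\alpha$, since then $\Int F(v,\alpha)=\bigcup_{\beta<\alpha}F(v,\beta)\subset F$, hence $F(v,\alpha)=\Clos\Int F(v,\alpha)\subset F$. First $\beta_0>0$: for $\beta$ small, $F(v,\beta)\subset\{x\cdot v\geq u_\beta(0)\}$ sits arbitrarily far from the origin, and since $\cC(v,\alpha)\cap\SSp^n$ is an open subset of $\SSp^n$ in $\cE$ (hence in $\Int\cE$), the asymptotic comparison forces $F(v,\beta)\subset\Int F$. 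Assume $0<\beta_0<\alpha$ for contradiction; closedness gives $F(v,\beta_0)\subset F$, so every $p_j\in F(v,\beta_j)\setminus F$ with $\beta_j\searrow\beta_0$ lies in the shrinking slab $F(v,\beta_j)\setminus F(v,\beta_0)$. If $\{p_j\}$ stays bounded, $p_j\to p_\infty\in\partial F(v,\beta_0)\cap\partial F$ with $\Int F(v,\beta_0)\subset F$ locally on one side; the maximum principle forces $\partial F(v,\beta_0)\equiv\partial F$ locally, and by an open-and-closed argument on the connected hypersurface $\partial F(v,\beta_0)$, globally. Passing to tangent cones at infinity gives $\partial\cC(v,\beta_0)\subset\partial C(\cE)$, contradicting that $\partial\cC(v,\beta_0)\cap\SSp^n\subset\cC(v,\alpha)\cap\SSp^n\subset\Int\cE$ is disjoint from $\partial\cE$ (using $\beta_0<\alpha$). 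If $|p_j|\to\infty$, the accumulation directions of $p_j/|p_j|$ lie in $\Clos\cC(v,\beta_0)\cap\SSp^n\subset\Int\cE$ at positive angular distance from $\partial\cE$, and the asymptotic comparison implies $p_j\in\Int F$ for $j$ large, again a contradiction.

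Case (ii) runs on the same template with $\beta_0:=\inf\{\beta\in(0,\pi/2): F(v,\beta)\cap F\neq\emptyset\}$, aiming at $\beta_0>\alpha$. If $\beta_0\leq\alpha$, then $\Int F(v,\beta_0)\cap F=\emptyset$, and in the bounded subcase a touching point $p_\infty\in\partial F(v,\beta_0)\cap\partial F$ forces $\partial F(v,\beta_0)\subset\partial F$ and hence $\partial\cC(v,\beta_0)\subset\partial C(\cE)$; this contradicts that $\partial\cC(v,\beta_0)\cap\SSp^n\subset\Clos\cC(v,\alpha)\cap\SSp^n$ is disjoint from $\cE\supset\partial\cE$. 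The escape subcase is ruled out because $p_j/|p_j|$ accumulate in $\Clos\cC(v,\alpha)\cap\SSp^n$ at positive angular distance from $\cE$, while $p_j\in F$ must be within $o(|p_j|)$ of $C(\cE)$.

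The main obstacle is the maximum principle step at a touching point $p_\infty$ which could a priori be a singular point of $\partial F$: one needs that a smooth $\mbfE$-minimal barrier touching an $\mbfE$-stationary optimally regular varifold from one side forces both local regularity and local coincidence. At regular points this is the classical Hopf principle in $\hat g$; at singular points it follows from the viscosity mean convexity of $F$ afforded by \cite{SolomonWhite89_Maxim} combined with Allard's regularity, which forces $p_\infty$ to be regular and the two hypersurfaces to agree locally. The remaining ingredients---continuity of the $\{F(v,\beta)\}$ family, connectedness of $\partial F(v,\beta_0)$ for unique continuation, and the quantitative asymptotic barrier comparison---are routine.
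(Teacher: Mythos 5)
Your proof is correct and runs on essentially the same sliding-barrier-plus-strong-maximum-principle template as the paper's, but you elaborate it in a way that handles the $R<+\infty$ and $R=+\infty$ cases uniformly. The paper only writes out the $R<+\infty$ case, in which it can exploit the exact coincidence $F\setminus\BB_R=C(\cE)\setminus\BB_R$: once Solomon--White forces $\partial F(v,\bar\alpha)\subset\spt(\partial[F])$, one gets an immediate contradiction because outside $\BB_R$ the smooth barrier $\partial F(v,\bar\alpha)\subset\Clos(\cC(v;\bar\alpha))$ lies strictly in $\Int(C(\cE))=\Int(F)$, away from $\partial F=\partial C(\cE)$. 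Your version replaces this with the weaker Hausdorff asymptotic comparison $\eta_{1/r}(F)\to C(\cE)$, $\eta_{1/r}(\partial F)\to\partial C(\cE)$, which necessitates the bounded/unbounded dichotomy for the touching sequence $p_j$ and the passage to tangent cones at infinity; this is exactly the extra bookkeeping needed to make the paper's ``the case $R=+\infty$ will be similar'' precise. Two small remarks: (a) the ``open-and-closed'' step to upgrade local to global coincidence is unnecessary, since the statement of Solomon--White already gives the global inclusion $\partial F(v,\beta_0)\subset\spt(\partial[F])$ once $\partial F(v,\beta_0)$ is connected and lies on one side; and (b) your aside about ``Allard regularity'' at singular touching points is a detour --- Solomon--White applies directly to a smooth minimal barrier touching a stationary varifold from one side, without first establishing regularity of $\partial F$ at the touching point. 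Neither affects the correctness of the argument.
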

  \begin{proof}
   We prove for $R<+\infty$; The case when $R = +\infty$ will be similar.
   Since $F\setminus \BB_R = C(\cE)\setminus \BB_R$, by Theorem \ref{Thm_Rot sym self-expander} (ii), for sufficiently small $\beta\in (0, \alpha)$, $F(v, \beta)\subset F$.  Consider \[
     \bar{\alpha}:= \sup\{\beta\in (0, \alpha): F(v, \beta)\subset F\},   \]
   If $\bar{\alpha}<\alpha$, then by continuity of $F(v,\beta)$ in $\beta$ from Theorem \ref{Thm_Rot sym self-expander},  $F(v, \bar{\alpha}) \subset F$ and $\partial F(v, \bar{\alpha})\cap \spt(\|\partial [F]\|) \neq \emptyset$.  Since $|\partial [F]|$ and $|\partial F(v, \bar{\alpha})|$ are both $\mbfE$-stationary in $(\RR^{n+1}\setminus \partial C(\cE))\cup \BB_R$, by strong maximum principle \cite{SolomonWhite89_Maxim}, $\partial F(v, \bar{\alpha})\subset\spt(\partial [F])$, contradicts to that $\Clos(\cC(v; \bar{\alpha}))\setminus \BB_R \subset \Int(F)\setminus \BB_R$.   Hence $\bar{\alpha} = \alpha$ and $F(v, \alpha)\subset F$. 
   
   Similarly, when $\Clos(\cC(v; \alpha)\cap \SSp^n) \cap \cE = \emptyset$, the argument above provides,  \[
     \sup\{\beta\in (0, \pi/2): F(v, \beta)\cap F = \emptyset\} >\alpha.   \]
   Hence by Theorem \ref{Thm_Rot sym self-expander}, $F(v, \alpha)\cap F =\emptyset$.
  \end{proof}
  
  Note that using Corollary \ref{Cor_Two-side barrier for self-expander}, one can also prove the existence of $\mbfE$-minimizing closed subset distant asymptotic to $C(\cE)$ near infinity, as is sketched in \cite[Lecture 2 F]{Ilmanen98_LecturesMCF} and proved for $C^2$-domains $\cE$ in \cite[Theorem 6.3]{Ding20_MinCone_SelfExpander}.  
  
  In Section \ref{Sec_Pf Main Thm}, we need the following existence result with constraint.
  \begin{Lem} \label{Lem_Exist Self-Expander w Obstacle}
   Let $E\subset \RR^{n+1}$ be a closed $C^{1,1}$-optimally regular mean convex cone, which is not perimeter minimizing itself in $E$.  
   Then there exists a closed Caccioppoli set $F\subset \Int(E)$ such that $F = \spt[F]$ is distant asymptotic to $E$ near $\infty$, and that $\partial [F]$ minimizes $\mbfE$-functional among \[
     \{\partial [Q]: Q\Delta F \subset E \}.   \]
   In particular, $\partial [F]$ is a self-expanding integral cycle with optimal regularity.
  \end{Lem}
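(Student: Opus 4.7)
The plan is to solve the minimization problem through an exhaustion by balls, using Corollary~\ref{Cor_Two-side barrier for self-expander} to control the asymptotic behavior at infinity. For each $R > 1$, consider the obstacle problem of minimizing $\mbfE[\partial [Q]; \BB_{2R}]$ over Caccioppoli sets $Q \subset E$ with $Q \setminus \BB_R = E \setminus \BB_R$. Existence of a minimizer $F_R$ follows from lower semicontinuity of $\mbfE$ and BV-compactness, cleanest when $\mbfE$ is viewed as the $n$-area functional under the conformal metric $\hat g = e^{|x|^2/2n}\cdot g_{Euc}$. Since $E$ is $C^{1,1}$-optimally regular mean convex, $\partial E$ is a one-sided viscosity subsolution barrier for $\mbfE$-stationary currents; the strong maximum principle of \cite{SolomonWhite89_Maxim} (applied in $\hat g$) together with the cut-pasting argument ensures that the free part of $\partial [F_R]$ is $\mbfE$-stable in $\BB_{2R}$, and Schoen--Simon--Wickramasekera regularity applied in $\hat g$ then gives $\partial [F_R]$ optimal regularity.

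Next I would propagate barriers via Corollary~\ref{Cor_Two-side barrier for self-expander} and Theorem~\ref{Thm_Rot sym self-expander}. For every $v \in \SSp^n$ and every $\alpha \in (0, \pi/2)$ with $\Clos(\cC(v; \alpha)) \cap \SSp^n \subset \Int(\cE)$, where $\cE := E \cap \SSp^n$, the rotationally symmetric self-expander $F(v, \alpha)$ is a global $\mbfE$-minimizer contained in $E$. Since $F_R \cup F(v, \alpha)$ and $F_R \cap F(v, \alpha)$ are admissible competitors in both minimization problems, cut-pasting plus the strong maximum principle force $F(v, \alpha) \subset F_R$ for every $R$ large enough that $F(v, \alpha) \setminus \BB_R \subset E$. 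Letting $v$ approach an arbitrary point of $\Reg(\partial \cE)$ and $\alpha \nearrow \pi/2$, one obtains uniform-in-$R$ Hausdorff closeness of $\eta_{1/S}(F_R)$ to $E$ on any fixed annulus once $R/S$ is sufficiently large.

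Passing to a subsequential limit $R_j \to \infty$ by Lemma~\ref{Lem_Cptness of perimeter minzer in E_j} (applied under $\hat g$), one obtains a Caccioppoli set $F \subset E$ with $\partial [F]$ a globally $\mbfE$-minimizing integral cycle of optimal regularity, hence a stable self-expander; the barriers from the previous paragraph survive the limit to give $F$ distant asymptotic to $E$ near infinity. The hypothesis that $E$ is not perimeter minimizing in itself provides a Caccioppoli set $Q_0 \subset E$ and a bounded open $W_0$ with $\cP(Q_0; W_0) < \cP(E; W_0)$; rescaling $Q_0$ to a fixed small scale and using it as a competitor in the minimization for $F_R$, while estimating the $e^{|x|^2/4}$-weighted perimeters on $W_0$, yields a uniform $\mbfE$-gain, forcing $F_R \neq E$ and hence $F \neq E$. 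The strong maximum principle then upgrades this to $F \subset \Int(E)$.

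The principal difficulty lies in ensuring that the distant-asymptotic property (not merely the containment $F \subset E$) survives the limit: the strict monotonicity and foliation property in Theorem~\ref{Thm_Rot sym self-expander}(ii), applied at each regular boundary direction of $\partial \cE$, is exactly what allows the barrier inclusions to sweep out a full Hausdorff neighborhood of $\partial E$ inside $E$ uniformly in $R$. The singular set $\Sing(\partial E)$, of Hausdorff dimension $\leq n-7$, introduces no additional difficulty since regular boundary directions are dense on $\partial \cE$ and the associated self-expander barriers already cover every direction in the Hausdorff sense.
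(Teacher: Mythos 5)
Your proposal follows essentially the same approach as the paper: exhaust by balls $\BB_{2R}$ with the obstacle constraint $E\setminus\BB_R\subset Q\subset E$, obtain $\mbfE$-stationarity in $(\RR^{n+1}\setminus\partial E)\cup\BB_R$ and optimal regularity via \cite{Lin85} and Wickramasekera, pass to the limit with Lemma~\ref{Lem_Cptness of perimeter minzer in E_j} using the rotationally symmetric self-expanders of Corollary~\ref{Cor_Two-side barrier for self-expander} as barriers to preserve the asymptotic cone, and deduce $F\subset\Int(E)$ from $F\neq E$ by the strong maximum principle. Your argument for $F\neq E$ (rescaling a perimeter-gaining competitor to a fixed small scale where the weight $e^{|x|^2/4}$ is essentially $1$) is the same idea as the paper's (blowing up $E$ at the origin to reduce $\mbfE$-minimality to perimeter-minimality) read in the opposite direction; just note that the implication "$F_R\neq E$ hence $F\neq E$" should be run through the $\mbfE$-minimality of the limit $F$ rather than through the approximants alone.
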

  \begin{proof}
   For $R\geq 1$, minimize $\mbfE$-functional in $\BB_{2R}$ among Caccioppoli sets  \[
     \{Q\subset \RR^{n+1}: E\setminus \BB_R \subset Q\subset E\}   \]
   to find a Caccioppoli set $F_R\subset E$.  Since $\partial \BB_R$ and $\Reg(\partial E)$ are both expander mean convex (i.e. $\vec{H} - X^\perp/2$ either vanishes or points inward), by \cite[Chapter 2]{Lin85}, $|\partial [F_R]|$ is $\mbfE$-stationary in $(\RR^{n+1}\setminus \partial E)\cup \BB_R$. WLOG $F_R = \spt(F_R)$.  
   When $R\to +\infty$, by Lemma \ref{Lem_Cptness of perimeter minzer in E_j} and Corollary \ref{Cor_Two-side barrier for self-expander}, $F_R$ subconverges in $\mbfF$-metric to some closed Caccioppoli set $F\subset E$ which is distant asymptotic to $E$ near infinity, $\mbfE$-minimizing in $E$ and $\mbfE$-stationary in $\RR^{n+1}$.  In particular, $\partial F$ is optimally regular.
   
   By strong maximum principle \cite{SolomonWhite89_Maxim, Ilmanen96}, since $F \neq E$ (Otherwise $E$ is $\mbfE$-minimizing in $E$, and by blowing up $E$ at $\mathbf{0}$, $E$ is perimeter minimizing in $E$, which is a contradiction), we must have $F\subset \Int(E)$. 
  \end{proof}
  
  \subsection{Weak set flow and level set flow} \label{Subsec_Weak flow}
   Level set flow in the Euclidean space is introduced by \cite{EvansSpruck91} and independently by \cite{ChenGigaGoto91_LSF}, and is later generalized to defined on manifolds by \cite{Ilmanen92_LSF, Ilmanen93_IntroLSF}.  Here, we shall use the notion of weak (set) flow and level set flow possibly with boundary, developed by \cite{White95_WSF_Top}.
   \begin{Def} \cite{White95_WSF_Top} 
    Let $M$ be a compact $n$-manifold, possibly with boundary, $f: M\times [a, b]\to \RR^{n+1}$ be a continuous map. Call $\cM:= \{(f(t, x), t): t\in [a, b], x\in M\}\subset \RR^{n+1}\times \RR$ a \textbf{classical flow} if, 
    \begin{itemize}
    \item $f$ is smooth on $\Int(M)\times (a, b]$;
    \item $f(\cdot, t)$ is one-to-one on $M$ for each $t\in [a, b]$ and is a smooth embedding on $\Int(M)$ for each $t\in (a, b]$;
    \item For each $(x, t)\in \Int(M)\times (a, b]$, we have \[
      (\frac{\partial }{\partial t}f(x, t))^\perp = \vec{H}(x, t),  \]
      where $^\perp$ means projection onto normal direction of $f(M, t)$, and $\vec{H}(x,t)$ denotes the mean curvature vector of $f(M, t)$ at $f(x, t)$.
    \end{itemize}
    Call \[
      \partial_h \cM:= \{(f(x, t), t): \text{either }t=a \text{ and }x\in M, \text{ or }t\in [a, b] \text{ and }x\in \partial M\},   \]
    the \textbf{heat boundary} of $\cM$.
    
    Given a closed subset $\Gamma\subset \RR^{n+1}\times \RR_+$, call a closed subset $\cM\subset \RR^{n+1}\times \RR_+$ a \textbf{weak (set) flow} generated by $\Gamma$ at $t=0$, if 
    \begin{itemize}
    \item $\cM(0) = \Gamma(0)$;
    \item For every classical flow $\cM'$ with $\partial_h\cM' \cap \cM = \emptyset$ and $\cM' \cap \Gamma = \emptyset$, we always have $\cM' \cap \cM$.
    \end{itemize}
   \end{Def}
   In this paper, we need the following
   \begin{Thm} \cite[Theorem 4.1]{White95_WSF_Top}  \label{Thm_White_Avoid}
    Let $i=1,2$, $\cM_i$ be weak flow generated by $\Gamma_i$ at $t=0$.  Suppose for each $\tau\geq 0$, $\{(t, x)\in \cM_a: t\leq \tau\}$ is compact.
    Suppose also that at some $T>0$, \[
      dist_T(\cM_1, \cM_2) < min_{i\neq j}dist_T(\cM_i, \Gamma_j).   \]
    Then $dist_t(\cM_1, \cM_2) \geq dist_T(\cM_1, \cM_2)$ for $t$ on some interval $[T, T+\epsilon]$.
   \end{Thm}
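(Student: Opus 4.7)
The plan is to argue by contradiction: suppose that on every interval $[T, T+\epsilon]$ there is some $t>T$ with $d(t) := dist_t(\cM_1,\cM_2) < r := d(T)$.  By the compactness hypothesis on past time-slices, $d$ is lower semicontinuous, so one can extract sequences $t_n \searrow T$ together with $p_n \in \cM_1(t_n)$, $q_n \in \cM_2(t_n)$ subconverging to $p_\infty \in \cM_1(T)$, $q_\infty \in \cM_2(T)$ with $|p_\infty - q_\infty| = r$ (a limiting pair of closest points).  The strict gap $r < R := \min_{i\ne j} dist_T(\cM_i, \Gamma_j)$ leaves room to insert a classical flow as a barrier between $\cM_1$ and $\cM_2$.

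The barrier I would use is a family of shrinking round balls centered on $\cM_1$-points.  For each $p\in \cM_1(T)$ close to $p_\infty$, start at time $T$ with the sphere $\partial B(p,\rho_0)$ of radius $\rho_0$ slightly larger than $r$ but strictly less than $R$, and let it evolve by mean curvature, so that $\rho(t)^2 = \rho_0^2 - 2n(t-T)$.  This defines a classical flow $\cM'_p$ on a short interval $[T,T+\epsilon]$.  The heat boundary of $\cM'_p$ at time $T$ is $\partial B(p,\rho_0)$, which is strictly separated from $\cM_1(T)$ in a neighborhood of $p_\infty$ (because $\rho_0 < R \le dist_T(\cM_1,\Gamma_2)$ together with the distance-$r$ gap between $\cM_1(T)$ and $\cM_2(T)$ forces the sphere to lie in a thin shell avoiding both flows).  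One then checks that $\Gamma_2$ is disjoint from $\cM'_p$ as well, because $\rho_0 < R \le dist_T(\cM_1,\Gamma_2)$ and the ball can only shrink from $T$ onward.  The defining property of a weak set flow now forces $\cM'_p \cap \cM_2 = \emptyset$ on $[T,T+\epsilon]$.

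To extract the contradiction, observe that shrinking balls of radius $\rho_0$ just above $r$ can be threaded so that the union of $\partial B(p,\rho(t))$, taken over all $p$ in a small neighborhood of $p_\infty$ in $\cM_1(T)$, sweeps out every point within distance $\rho(t) - o(1)$ of $\cM_1(t_n)$ at times $t_n$ near $T$ (using the compactness to compare $\cM_1(t_n)$ with $\cM_1(T)$ in Hausdorff distance via the weak-flow shrinking-ball bound applied to $\cM_1$ itself).  But the sequence $q_n \to q_\infty$ with $|p_n - q_n| < r$ forces $q_n$ to eventually lie strictly inside some $B(p,\rho(t_n))$ for a nearby $p$, contradicting $\cM'_p \cap \cM_2 = \emptyset$.

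The main obstacle is making the third step rigorous: one needs that from $dist_T(\cM_1,\cM_2) = r$ one genuinely gets $dist_t(\cM_1,\cM_2) \ge r - o(1)$ at times immediately preceding $T$ (so that the barrier balls, whose radii $\rho(t)$ differ from $\rho_0$ by only $O(t-T)$, actually separate $\cM_1$ and $\cM_2$ throughout $[T,T+\epsilon]$).  This is where the compactness of past slices is used most heavily — it converts the a priori lower semicontinuity of $d$ into the quantitative uniform separation that the shrinking-ball barrier argument requires.  With that in hand, the rest is bookkeeping, and in fact gives the slightly stronger conclusion that $d(t) \ge d(T)$ rather than merely $d(t) \ge d(T) - o(1)$.
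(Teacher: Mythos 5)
This theorem is quoted from White's paper and the present paper offers no proof of its own, so there is nothing in the paper to compare against; I am evaluating your argument on its own terms.

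Your barrier construction does not satisfy the hypotheses of the weak-flow avoidance principle. You place a sphere $\partial B(p,\rho_0)$ with $p\in\cM_1(T)$ and $\rho_0$ \emph{strictly larger} than $r=dist_T(\cM_1,\cM_2)$, and then claim that this sphere ``lies in a thin shell avoiding both flows.'' That is false in general. First, the sphere is centered at a point of $\cM_1(T)$; unless $\cM_1(T)$ is contained in $B(p,\rho_0)$, the sphere of radius $\rho_0>0$ about $p$ will meet $\cM_1(T)$. Second, and more to the point, you want to apply the weak-flow property of $\cM_2$, which requires $\partial_h\cM'_p\cap\cM_2=\emptyset$, i.e.\ $\partial B(p,\rho_0)\cap\cM_2(T)=\emptyset$. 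But the nearest point of $\cM_2(T)$ to $p_\infty$ lies at distance $r<\rho_0$, hence strictly \emph{inside} $B(p_\infty,\rho_0)$; as $\cM_2(T)$ extends away from that nearest point it will in general cross the sphere $\partial B(p,\rho_0)$. So the heat-boundary condition fails and the conclusion $\cM'_p\cap\cM_2=\emptyset$ does not follow. Your final step also conflates two different things: even if one did know $\cM'_p\cap\cM_2=\emptyset$, that only says $\cM_2(t_n)$ avoids the \emph{sphere} $\partial B(p,\rho(t_n))$; having $q_n$ lie in the \emph{interior} $B(p,\rho(t_n))$ is not, by itself, a contradiction (and indeed it cannot be, because at time $T$ the ball $B(p,\rho_0)$ already contains points of $\cM_2(T)$ by your choice $\rho_0>r$).

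There is a second, more structural gap. The natural corrected barrier is to center shrinking spheres at points of $\cM_2(T)$ (or in the gap) with initial radius $\rho_0<r$, so that they genuinely avoid $\cM_1(T)$. Weak-flow avoidance then gives $dist(\cM_1(t),\cM_2(T))\geq\sqrt{\rho_0^2-2n(t-T)}$, and combining with the standard shrinking-ball containment $\cM_2(t)\subset \bar B_{\sqrt{2n(t-T)}}(\cM_2(T))$ away from $\Gamma_2$, one only reaches
\[
dist_t(\cM_1,\cM_2)\ \geq\ \sqrt{r^2-2n(t-T)}-\sqrt{2n(t-T)}\ =\ r-O(\sqrt{t-T}),
\]
which is strictly weaker than the asserted $dist_t(\cM_1,\cM_2)\geq r$. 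The loss of order $\sqrt{t-T}$ is intrinsic to the shrinking-sphere barrier, which does not keep constant distance from the flow it chases. Your closing sentence (``the rest is bookkeeping, and in fact gives $d(t)\geq d(T)$'') asserts precisely what would need to be proven; the sharp monotonicity requires a genuinely different mechanism than the crude shrinking-sphere bound, and as written you do not supply one.
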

   Here $dist_t(\cM_1, \cM_2):= dist_{\RR^{n+1}}(\cM_1(t), \cM_2(t))$, where $\cM_i(t):= \{x\in \RR^{n+1}: (x, t)\in \cM_i\}$.
   In particular, this Theorem implies that whenever $\cM_1\cap \Gamma_2 = \cM_2\cap \Gamma_1 = \emptyset$ in Theorem \ref{Thm_White_Avoid}, we always have $\cM_1\cap \cM_2 = \emptyset$.
   
   \cite{White95_WSF_Top} also provide the construction of level set flow under this definition.   
   \begin{Prop} \cite[Proposition 5.1]{White95_WSF_Top} \label{Prop_White_Exist LSF}
    Given a closed subset $\Gamma\subset \RR^{n+1}\times \RR_+$, there's a unique weak flow $\cM$ generated by $\Gamma$ at $t=0$ (known as the \textbf{level set flow}) such that it contains every weak flow generated by $\Gamma$ at $t=0$.
   \end{Prop}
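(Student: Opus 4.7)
The plan is to exhibit the level set flow as the closure of the union of all weak flows generated by $\Gamma$ at $t=0$, and then verify that this closure is itself a weak flow, automatically maximal by construction. Let $\scF = \{\cM_\alpha\}_{\alpha\in\Lambda}$ be the collection of all weak flows generated by $\Gamma$ at $t=0$ in the sense of the preceding definition. A preliminary point is that $\scF\neq \emptyset$: one may approximate $\Gamma$ by smooth space-time sets $\Gamma_k$, run classical mean curvature flow from each smooth initial slice as long as smoothness persists (ignoring what happens at singular times by simply adding all later times back into a maximal closed candidate), and extract a Hausdorff limit of the space-time tracks; the classical avoidance principle applied to the smooth approximants passes to the limit and gives a weak flow.

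Set
\[
  \cM := \overline{\bigcup_{\alpha \in \Lambda} \cM_\alpha} \subset \RR^{n+1}\times \RR_+.
\]
By construction $\cM$ is closed and contains every element of $\scF$. I first check $\cM(0) = \Gamma(0)$. The inclusion $\Gamma(0) \subset \cM(0)$ is trivial since any single $\cM_\alpha$ already has $\cM_\alpha(0) = \Gamma(0)$. For the reverse, suppose $(x_k, t_k) \in \cM_{\alpha_k}$ with $t_k \searrow 0$ and $x_k \to x_\infty \notin \Gamma(0)$. Then a tiny round sphere centered at $x_\infty$, disjoint from $\Gamma(0)$ (and from $\Gamma$ on a sufficiently short time interval), generates a shrinking-sphere classical flow which avoids $\Gamma$; the weak flow property of each $\cM_{\alpha_k}$ then forces the sphere to avoid $\cM_{\alpha_k}$, contradicting the fact that for large $k$ the sphere encloses $(x_k,t_k)\in \cM_{\alpha_k}$. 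Thus $\cM(0)=\Gamma(0)$.

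The crucial step is the avoidance axiom for $\cM$: given a classical flow $\cM'$ with $\partial_h\cM'\cap \cM = \emptyset$ and $\cM'\cap \Gamma = \emptyset$, conclude $\cM'\cap \cM = \emptyset$. Since each $\cM_\alpha \subset \cM$, the hypothesis gives $\partial_h\cM'\cap \cM_\alpha = \emptyset$ and $\cM'\cap \Gamma = \emptyset$, so the weak-flow property of each $\cM_\alpha$ yields $\cM'\cap \cM_\alpha = \emptyset$, and hence $\cM'\cap \bigcup_\alpha \cM_\alpha = \emptyset$. The main obstacle is promoting disjointness from the union to disjointness from its closure. I would do this by a short perturbation argument: because $\partial_h\cM'$ is compact and sits at positive distance from the closed set $\cM$, one can thicken $\cM'$ slightly in its outward-normal direction to a classical flow $\widetilde{\cM'}\supsetneq \cM'$ (using short-time existence for smooth mean curvature flow with the enlarged initial and side-boundary data) whose heat boundary still has positive distance to $\cM$ and which still misses $\Gamma$. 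Re-running the argument for $\widetilde{\cM'}$ gives $\widetilde{\cM'}\cap \bigcup_\alpha \cM_\alpha = \emptyset$; but $\widetilde{\cM'}$ has nonempty interior containing $\cM'$, so any point of $\cM'\cap \cM$ would be accumulated by points of $\bigcup_\alpha \cM_\alpha$ inside $\widetilde{\cM'}$, a contradiction.

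Uniqueness is then immediate: $\cM \in \scF$ by the previous paragraphs, so any candidate $\cN$ containing every weak flow must contain $\cM$, while conversely $\cN\subset \cM$ since $\cN$ itself lies in $\scF$. The technical heart is the perturbation construction of $\widetilde{\cM'}$, where one needs short-time existence of smooth mean curvature flow with moving boundary to ensure one genuinely gets a classical flow rather than merely a $C^0$-perturbed set; everything else is bookkeeping plus repeated use of the classical avoidance against shrinking spheres.
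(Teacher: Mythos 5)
The paper does not prove this statement; it simply cites it as \cite[Proposition 5.1]{White95_WSF_Top}, so there is no in-text proof to compare against. Your overall strategy (take $\cM$ to be the closure of the union of all weak flows from $\Gamma$ and verify it is itself a weak flow) is indeed the standard route and matches White's argument, but there is a genuine gap in the step that closes the distance between ``disjoint from the union'' and ``disjoint from its closure.''

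The issue is the claimed construction of a classical flow $\widetilde{\cM'}\supsetneq\cM'$ obtained by ``thickening $\cM'$ in its outward-normal direction,'' followed by the assertion that ``$\widetilde{\cM'}$ has nonempty interior containing $\cM'$.'' By the definition in Section~\ref{Subsec_Weak flow}, a classical flow is the space-time track of a one-parameter family of embedded hypersurfaces; each time slice is an $n$-dimensional set in $\RR^{n+1}$, so the flow has empty interior in $\RR^{n+1}\times\RR$, and in particular no single classical flow can contain another as a proper subset with interior. The correct mechanism here is not one enlarged classical flow but a \emph{one-parameter family} of classical flows $\{\cM'_s\}_{|s|<\delta}$, obtained by smoothly perturbing the initial slice and the prescribed boundary motion and invoking continuous dependence of short-time solutions to mean curvature flow with moving boundary. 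For $\delta$ small enough, each $\partial_h\cM'_s$ stays at positive distance from $\cM$ and each $\cM'_s$ stays disjoint from $\Gamma$, so the weak-flow property of every $\cM_\alpha$ gives $\cM'_s\cap\cM_\alpha=\emptyset$ for all $\alpha$ and all $s$. The union $\bigcup_{|s|<\delta}\cM'_s$ then sweeps out a space-time neighborhood of $\cM'\setminus\partial_h\cM'$, and it is \emph{this union} (not any single flow) which is open near $\cM'$ and disjoint from $\bigcup_\alpha\cM_\alpha$, hence disjoint from the closure $\cM$. That gives the contradiction you want, whereas the single ``thickened'' classical flow cannot.

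Two smaller points. First, the non-emptiness of the collection $\scF$ is immediate: $\Gamma(0)\times\{0\}$ is already a weak flow generated by $\Gamma$ at $t=0$, so the limiting construction you sketch is unnecessary. Second, in the verification of $\cM(0)=\Gamma(0)$, the phrase ``the sphere encloses $(x_k,t_k)$'' is not by itself a contradiction, since a weak flow is only required to avoid the spherical hypersurface, not the ball it bounds; you need to center and size the shrinking sphere so that $(x_k,t_k)$ lies \emph{on} the sphere at time $t_k$ (or so that the sphere has already vanished by time $t_k$), and only then invoke avoidance. Both are easy fixes, but as written they do not quite close.
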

  
   We end up this section with an example.  
   \begin{Lem} \label{Lem_Trancated self-expander is a weak flow}
    Let $W\subset \RR^{n+1}$ be an open subset; $\Sigma\subset W$ be an $\mbfE$-stable hypersurface with optimal regularity. Then for every $W'\subset \subset W$ and every $\lambda\geq 0$, \[
      \cM:= \bigsqcup_{t\geq \lambda} \eta_{\sqrt{t}}(W'\cap \Clos(\Sigma) )\times \{t-\lambda\},   \]
    is a weak (set) flow generated at $t=0$ by \[
      \Gamma= \eta_{\sqrt{\lambda}}(W'\cap \Clos(\Sigma) )\times \{0\} \cup \bigsqcup_{t\geq \lambda} \eta_{\sqrt{t}}(\partial W'\cap \Clos(\Sigma) )\times \{t-\lambda\}.   \]
   \end{Lem}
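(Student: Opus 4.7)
The plan is to verify directly the two conditions defining a weak set flow. The initial condition is automatic since $\cM(0) = \eta_{\sqrt{\lambda}}(W' \cap \Clos(\Sigma)) = \Gamma(0)$ by construction. The real content is the avoidance axiom: for every classical flow $\cM'$ with $\partial_h \cM' \cap \cM = \emptyset$ and $\cM' \cap \Gamma = \emptyset$, one must show $\cM' \cap \cM = \emptyset$.

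The first input is that $\mbfE$-stability of $\Sigma$ on $W$ forces the self-expander equation $\vec{H}_\Sigma = X^\perp/2$ on $\Reg(\Sigma)$. Under the rescaling $y \mapsto \sqrt{t}\,y$ this equation becomes precisely the mean curvature flow equation, so the family $\{\eta_{\sqrt{t}}(\Reg(\Sigma) \cap W')\}_{t > 0}$ is a smooth classical MCF. More globally, the integral varifold family $s \mapsto (\eta_{\sqrt{s+\lambda}})_\sharp(|\Sigma|\llcorner W')$, $s \geq 0$, defines an integral Brakke flow on $\RR^{n+1} \setminus \Gamma(s)$: the Brakke inequality holds with equality on the smooth regular part, and the singular set has Hausdorff dimension $\leq n-7$ by the optimal regularity hypothesis, hence has $\scH^n$-measure zero and contributes nothing to the measure-theoretic inequality.

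I would finish by a first-contact argument. Assume for contradiction $\cM' \cap \cM \neq \emptyset$, and let $t_0$ be the first contact time with $p \in \cM'(t_0) \cap \cM(t_0)$. The conditions $\partial_h \cM' \cap \cM = \emptyset$ and $\cM' \cap \Gamma = \emptyset$ force $(p, t_0)$ to lie in the interior of both $\cM'$ and $\cM$, in particular $p \in \eta_{\sqrt{t_0+\lambda}}(\Int(W') \cap \Clos(\Sigma))$. If $p \in \eta_{\sqrt{t_0+\lambda}}(\Reg(\Sigma))$, then $\cM$ is a smooth MCF in a space-time neighborhood of $(p, t_0)$, as is $\cM'$, and the strong maximum principle for smooth mean curvature flows contradicts first-contact. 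The main obstacle is the singular contact case $p \in \eta_{\sqrt{t_0+\lambda}}(\Sing(\Sigma))$; this is ruled out via the general Brakke-flow-to-weak-flow correspondence (cf.\ Ilmanen, White), which asserts that the support of any integral Brakke flow is a set-theoretic subsolution of MCF and thus automatically satisfies the avoidance property against classical flows with compatible heat boundary. Applying this to the Brakke flow constructed in the previous step completes the contradiction, giving $\cM' \cap \cM = \emptyset$ as required.
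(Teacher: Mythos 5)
Your proof is in the same spirit as the paper's (a first-contact argument), but it diverges at the key step and relies on machinery that the paper deliberately avoids. After locating a first contact time $t_0$ and a contact point $p$ in the spacetime interior, the paper argues that the entire contact set lies in $\eta_{\sqrt{t_0+\lambda}}(\Int(W')\cap\Reg(\Sigma))$, picks a compact smooth neighborhood $\cN$ of its preimage inside $\Reg(\Sigma)\cap\Int(W')$, and observes that $t\mapsto\eta_{\sqrt{t}}(\cN)$ and $\cM'|_{[a,t_0]}$ are \emph{two classical flows} touching at $t=t_0$ but with disjoint heat boundaries. That reduces the whole lemma to White's maximum principle for classical flows (\cite[Lemma 3.1]{White95_WSF_Top}), with no need for Brakke-flow technology. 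Your treatment of the regular contact case is effectively the same step; the real divergence is how you dispose of the possibility $p\in\eta_{\sqrt{t_0+\lambda}}(\Sing(\Sigma))$.

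There you invoke a ``Brakke-flow-to-weak-flow correspondence,'' asserting that the support of any integral Brakke flow avoids classical flows with compatible heat boundary. Two issues. First, the object you build, $s\mapsto(\eta_{\sqrt{s+\lambda}})_\sharp(|\Sigma|\llcorner W')$, is a truncated flow with a moving lateral boundary supported on $\partial W'$; it is \emph{not} an integral Brakke flow on $\RR^{n+1}$, and ``a Brakke flow on $\RR^{n+1}\setminus\Gamma(s)$'' is a time-dependent domain, which is not a setting for which the standard avoidance theorem (Ilmanen's \cite[10.5]{Ilmanen94_EllipReg}) is stated. The paper is aware of exactly this: the closing remark after the proof points out that ``a more general result can be proved for integral Brakke motions with codimension $1$ restricted to a spacetime closed subset, using the same argument as \cite[10.5]{Ilmanen94_EllipReg}. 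But we don't need it in this paper.'' In other words, you are citing as a known fact precisely the statement the authors decided not to develop because it requires redoing Ilmanen's argument in the restricted setting. Second, even granting the truncated Brakke flow exists, verifying the Brakke inequality across $\Sing(\Sigma)$ is more than ``the singular set has $\scH^n$-measure zero'': the inequality controls the time derivative of mass against test functions, and one must check that no mass is lost at the singular set under the rescaling flow (this is true here, but needs an argument, e.g.\ via stationarity under $\hat g$ and the codimension bound, not a one-line dismissal).

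So your route is defensible in outline, and if you fleshed out the truncated Brakke flow construction and the restricted avoidance principle it would give a (somewhat more general) proof. But as written it outsources the singular-contact case to a theorem you would first have to prove, whereas the paper's argument sidesteps the issue entirely by localizing to a compact smooth piece and invoking only the elementary classical-flow maximum principle. The paper's step you would want to justify, if anything, is why the first-contact set necessarily avoids $\Sing(\Sigma)$; that is what its construction of $\cN$ quietly uses, and a blow-up argument (a non-flat stable minimal cone cannot lie on one side of a hyperplane through its vertex) is the standard way to see it. If you prefer your Brakke-flow route, at least state and prove the restricted avoidance principle you need rather than citing it as folklore.
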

   \begin{proof}
    Since the restriction of weak flow onto a sub-interval is also a weak flow, we can assume WLOG that $\lambda = 0$.
    Suppose for contradiction that there exists a classical flow $\cM'$ over $[a, b]\subset (0, +\infty)$ such that $\cM'\cap \Gamma = \partial_h \cM' \cap \cM = \emptyset$ but $\cM' \cap \cM \neq \emptyset$.  By definition, \[
      \mathbf{d}(t):= dist_{\RR^{n+1}}(\cM(t), \cM'(t)),   \]
    is a continuous function in $t\in [a, b]$ and vanishes at some point in $(a, b]$; And \[
      \mathbf{d}_\partial (t):= min\{dist_{\RR^{n+1}}(\cM(t), \partial_h\cM'\ (t)), dist_{\RR^{n+1}}(\Gamma(t), \cM'(t))\},   \]
    has infimum $>0$, and $\mathbf{d}(a) = \mathbf{d}_\partial (a)$. 
    Let \[
      T:= \inf\{s\in [a, b]: \mathbf{d}(s)=0\} \in (a, b],   \]
    Then since $\mathbf{d}_\partial (T)>0$ and $\Int(\cM'(t))$ is smooth, we know that $\cR:= \cM(T)\cap \cM'(T)\neq \emptyset$ is a closed subset contained in $\Reg(\cM(T))\setminus \Gamma(T) = \eta_{\sqrt{T}}(\Int(W')\cap \Sigma)$.  Take a smooth closed neighborhood $\cN\supset \eta_{1/\sqrt{T}}(\cR)$ in $\Sigma\cap \Int(W')$, we know that $\sqcup_{t\in [a, T]}\eta_{\sqrt{t}}(\cN)\times \{t\}$ and restriction of $\cM'$ on $[a, T]$ are two classical flows over $[a, T]$ which intersects along $t=T$ but not intersects along the heat boundary. This violates the maximum principle for classical flows \cite[Lemma 3.1]{White95_WSF_Top}.
   \end{proof}
   Note that a more general result can be proved for integral Brakke motions with codimension $1$ restricted to a spacetime closed subset, using the same argument as \cite[10.5]{Ilmanen94_EllipReg}.  But we don't need it in this paper.

\section{Harnack Inequality for Minimal Hypersurfaces} \label{Sec_Harnack Ineq}
  By \cite{SchoenSimon81}, the convergence of stable minimal hypersurfaces might result in multiplicity in the limit.  The goal of this section is to show that if a stable minimal hypersurface is away from higher multiplicity at scale $1$, then so is it in every smaller scales. $n\geq 3$. 
  
  Denote $\scM^{\geq 2}$ to be the space of hypersurfaces with at least one piece of higher multiplicity , more precisely,
  \begin{align*}
    \scM^{\geq 2} := \Big\{V\in \cI\cV_n(\BB_4^{n+1}): &\ V = \sum_{i\geq 1} m_i |\Sigma_i| \text{ for some disjoint minimal hypersurfaces }\\ 
    &\ \Sigma_i\subset (\BB_4, g) \text{ and } m_i\in \NN \text{ such that } \|g - g_{Euc}\|_{C^{3, 1/2}}\leq 1/10, \\
    &\ m_1\geq 2 \text{ and }\Sigma_1\cap \Clos(\BB_2) \neq \emptyset\ \Big\}.    
  \end{align*}
  \begin{Prop} \label{Prop_Multi 1 in smaller scale}
   Let $\Lambda>0$, $\epsilon>0$. There exists $\delta_1(n, \Lambda)>0$ and $\Psi(\epsilon| n, \Lambda)\in (0, 1)$ with the following property. 
   If $\Sigma \subset (\BB_4^{n+1}, g)$ is a stable minimal hypersurface with  
   \begin{align}
    \|g - g_{Euc}\|_{C^4}\leq \delta_1,&\  & \mbfM(\Sigma)\leq \Lambda, &\  &  \mbfF(|\Sigma|, \scM^{\geq 2}) \geq \epsilon.    \label{Equ_g close, Mass bd, uniform multi 1}
   \end{align}
   Then for every $r\in (0, 1/4)$ and $x\in \spt(\Sigma)\cap \BB_1$, we have \[
    \mbfF_{\BB_4}(|\eta_{x, r^{-1}}(\Sigma)|, \scM^{\geq 2}) \geq \Psi(\epsilon| n, \Lambda).   \]
  \end{Prop}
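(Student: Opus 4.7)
The plan is a contradiction-compactness argument relying on Schoen-Simon regularity. Suppose the conclusion fails for some $\epsilon > 0$. Then there exists a sequence of stable minimal hypersurfaces $\Sigma_j \subset (\BB_4, g_j)$ with $\|g_j - g_{Euc}\|_{C^4} \to 0$, $\mbfM(\Sigma_j) \leq \Lambda$, and $\mbfF(|\Sigma_j|, \scM^{\geq 2}) \geq \epsilon$, together with $x_j \in \spt(\Sigma_j) \cap \BB_1$ and $r_j \in (0, 1/4)$ such that the rescalings $V_j := |\eta_{x_j, r_j^{-1}}(\Sigma_j)|$ satisfy $\mbfF_{\BB_4}(V_j, \scM^{\geq 2}) \to 0$.

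First I would apply Schoen-Simon compactness \cite{SchoenSimon81} to both sequences $|\Sigma_j|$ and $V_j$: for $V_j$ the uniform mass bound on $\BB_4$ follows from applying the monotonicity formula to $\Sigma_j$. Along a subsequence, $|\Sigma_j| \to V_\infty$ and $V_j \to V$ as stable stationary integer varifolds on $\BB_4$ with optimal regularity. Using upper semi-continuity of density together with Solomon-White's strong maximum principle \cite{SolomonWhite89_Maxim} to rule out non-tangential merging of disjoint pieces, I would verify that $\scM^{\geq 2}$ is closed in the $\mbfF$-topology under the given mass and metric constraints. This forces $V \in \scM^{\geq 2}$, while $\mbfF(|\Sigma_j|, \scM^{\geq 2}) \geq \epsilon$ yields $V_\infty \notin \scM^{\geq 2}$. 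After further subsequences, set $x_j \to x_\infty \in \Clos(\BB_1) \cap \spt(V_\infty)$ and $r_j \to r_\infty \in [0, 1/4]$.

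In the case $r_\infty > 0$, we have $V = (\eta_{x_\infty, r_\infty^{-1}})_\sharp V_\infty$ on $\BB_4$, so the multiplicity-$\geq 2$ component of $V$ meeting $\Clos(\BB_2)$ pulls back through $\eta_{x_\infty, r_\infty^{-1}}$ to a multiplicity-$\geq 2$ component of $V_\infty$ meeting $\Clos(\BB_{2 r_\infty}(x_\infty))$. Since $r_\infty \leq 1/4$ and $x_\infty \in \Clos(\BB_1)$, this ball is contained in $\BB_{3/2} \subset \BB_2$, which gives $V_\infty \in \scM^{\geq 2}$ and a contradiction. In the case $r_\infty = 0$, a diagonal argument together with the monotonicity formula identifies $V$ with a tangent cone of $V_\infty$ at $x_\infty$, necessarily a stable minimal hypercone. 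Let $\Sigma_1 \subset \spt V$ be the multiplicity-$m_1 \geq 2$ component meeting $\Clos(\BB_2)$, pick a regular point $y \in \Sigma_1$, and invoke \cite[Theorem 1]{SchoenSimon81}: for large $j$, $V_j$ near $y$ consists of $m_1$ disjoint $C^{1,1/2}$ graphs over $T_y \Sigma_1$. Letting $j \to \infty$, $\Theta(V_\infty, x_\infty) = \Theta(V, 0) \geq m_1 \geq 2$, so $x_\infty \in \Sing(V_\infty)$ and $V_\infty$ has multiple smooth branches through $x_\infty$ tangent to a common hyperplane (obtained from the planar regular part of $V$ through the origin, or by iterating to a planar tangent cone when $V$ itself is non-planar). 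The strong maximum principle of \cite{SolomonWhite89_Maxim} for stable stationary integer varifolds with optimal regularity then forces these branches to coincide as a single multiplicity-$\geq m_1$ piece of $V_\infty$ through $x_\infty \in \BB_1 \subset \BB_2$, yielding $V_\infty \in \scM^{\geq 2}$, the desired contradiction.

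The main obstacle is the $r_\infty = 0$ case: rigorously upgrading the multiplicity-$\geq 2$ structure of the tangent cone into an honest multiplicity-$\geq 2$ component of $V_\infty$ in a fixed neighborhood of $x_\infty$, by combining Schoen-Simon's $\epsilon$-regularity with the strong maximum principle to collapse disjoint tangential branches. A secondary technical point is establishing closedness of $\scM^{\geq 2}$ in the $\mbfF$-topology subject to the standing bounds, which again rests on ruling out non-tangential intersections in the limit via Solomon-White.
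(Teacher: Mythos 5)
The critical gap is in your case $r_\infty = 0$, which is in fact the entire content of the proposition; your argument there does not close, for several interlocking reasons.

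First, the diagonal limit $V = \lim_j |\eta_{x_j, r_j^{-1}}(\Sigma_j)|$ is \emph{not} a tangent cone of $V_\infty$ at $x_\infty$ in general. A tangent cone is obtained by blowing up the fixed limit $V_\infty$; here you are blowing up the varying $\Sigma_j$ along a diagonal, and the two need not agree --- this is precisely the phenomenon the proposition is designed to rule out. In fact, if $V$ \emph{were} a tangent cone of the stable hypersurface $V_\infty$, then by Ilmanen's theorem (cited in the paper, \cite{Ilmanen96}) it would automatically be multiplicity~$1$, giving $V \notin \scM^{\geq 2}$ and contradicting your closedness claim immediately --- so your subsequent density argument would be superfluous, a sign that the identification is not justified. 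Second, the asserted equality $\Theta(V_\infty, x_\infty) = \Theta(V, \mathbf{0})$ does not hold: monotonicity only gives $\theta(\mathbf{0},1;\|V\|) \leq \Theta(V_\infty, x_\infty)$, with strict inequality exactly when density jumps at intermediate scales $r_j \ll s \ll 1$. Third, the multiplicity-$\geq 2$ component $\Sigma_1$ of $V$ need not pass through $\mathbf{0}$ (it is only required to meet $\Clos(\BB_2)$), so $\Theta(V,\mathbf{0}) \geq m_1$ is unjustified. Finally, even granting $\Theta(V_\infty, x_\infty) \geq 2$, this does \emph{not} place $V_\infty$ in $\scM^{\geq 2}$: high density at a singular point is compatible with $V_\infty$ being a genuine multiplicity-$1$ varifold (e.g.\ a minimizing cone with a complicated singularity). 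You attempt to convert the density bound into a decomposition with a multiplicity-$\geq 2$ piece via Schoen-Simon $\epsilon$-regularity and Solomon-White, but this would require knowing \emph{a priori} that $V_\infty$ is graphical/sheeted near $x_\infty$, which is circular.

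The paper's actual proof handles intermediate scales directly. Using the mass bound and area monotonicity, the scales from $1$ down to $r_j$ are partitioned into $O_{n,\Lambda}(1)$ ``jump intervals''; on each interval the density ratio is nearly constant, so by the Almost Cone Rigidity Lemma (Lemma~\ref{Lem_Cone Rigidity}) the hypersurface is $\delta_2$-close at every such scale to some $m|C|$ with $C$ a stable cone and $m \in \NN$. The decisive ingredient you are missing is Lemma~\ref{Lem_Multi 1 cone away from multi>1 vfds}: multiplicity-$1$ stable cones with bounded density are a uniform $10\delta_2$ away from $\scM^{\geq 2}$. This gives a quantitative gap that lets one propagate $m=1$ from one scale to the next (consecutive cone approximations are $5\delta_2$-close, hence if one has $m=1$ so does the next), and an induction over all scales yields the contradiction with $\mbfF(|\eta_{x_j,r_j^{-1}}(\Sigma_j)|, \scM^{\geq 2}) \to 0$. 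Without a quantitative, scale-by-scale argument of this sort, the jump from scale $1$ to scale $r_j$ with $r_j \to 0$ cannot be controlled.
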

  Here, $\Psi(\epsilon| n, \Lambda)$ denotes a function in $\epsilon, n, \Lambda$, which tends to $0$ when $n, \Lambda$ are fixed and $\epsilon\to 0$.
  
  To prove Proposition \ref{Prop_Multi 1 in smaller scale}, we need the following preparations.
  \begin{Lem} \label{Lem_Multi 1 cone away from multi>1 vfds}
   For every $\Lambda>0$, there exists $\delta_2(n, \Lambda)>0$ such that for every stable minimal hypercone $C\subset \RR^{n+1}$ with $\|C\|(\BB_1)\leq \Lambda$, we have \[
     \mbfF_{\BB_4}(|C|, \scM^{\geq 2}) \geq 10\delta_2.   \]
  \end{Lem}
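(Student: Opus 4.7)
The plan is to argue by contradiction and compactness. Suppose the conclusion fails; then there exist sequences of stable minimal hypercones $C_k\subset \RR^{n+1}$ with $\|C_k\|(\BB_1)\leq\Lambda$ and varifolds $V_k\in\scM^{\geq 2}$ satisfying $\mbfF_{\BB_4}(|C_k|,V_k)\to 0$. Decomposing $V_k=m_{1,k}|\Sigma_{1,k}|+V_k'$ according to the definition of $\scM^{\geq 2}$, after passing to a subsequence I may assume $m_{1,k}\equiv m_1\geq 2$ is constant and the background metrics $g_k$ converge in $C^3$ to some $g_\infty$ with $\|g_\infty-g_{Euc}\|_{C^{3,1/2}}\leq 1/10$. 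Combining the Schoen--Simon compactness theorem \cite{SchoenSimon81} with Allard's integral compactness, I extract integral varifold limits $|C_k|\to V_\infty$, $V_k\to V_\infty$, and $m_1|\Sigma_{1,k}|\to W_1$, with $W_1\leq V_\infty$; moreover $V_\infty$ is a stationary cone (as the varifold limit of cones) and $W_1$ is stationary in $g_\infty$.

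Next I will show $W_1\neq 0$ using the monotonicity formula. Picking $p_k\in\Sigma_{1,k}\cap\Clos(\BB_2)$, and using that $\Sigma_{1,k}$ is minimal in the nearly Euclidean metric $g_k$, the monotonicity formula yields $\|\Sigma_{1,k}\|(B_1(p_k))\geq c(n)>0$ for a constant depending only on $n$. Therefore $\|m_1|\Sigma_{1,k}|\|(\Clos(\BB_3))\geq 2c(n)$, and upper semicontinuity of varifold mass on compact sets passes this lower bound to the limit, giving $\|W_1\|(\Clos(\BB_3))\geq 2c(n)>0$.

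The contradiction then arises from a multiplicity clash. As an integral limit of varifolds of constant multiplicity $m_1\geq 2$, the varifold $W_1$ has density at least $2$ at $\|W_1\|$-almost every point of its support. On the other hand, I claim $V_\infty$ has multiplicity one at every regular point of its support, since it is the varifold limit of single embedded multiplicity-one hypercones $|C_k|$: by Schoen--Simon $\epsilon$-regularity, any regular point of $V_\infty$ with density $d\geq 2$ would force $C_k$ to consist locally of $d$ pairwise disjoint smooth sheets converging to the same limit sheet, and propagating this via the cone structure of $C_k$ and the meeting of the putative components at the origin contradicts the $\scH^{n-2}$-nullity of $\Sing(C_k)$ guaranteed by optimal regularity. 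Granted this claim, the inequality $W_1\leq V_\infty$ confines $\spt(W_1)$ to $\Sing(V_\infty)$, which has Hausdorff dimension at most $n-7$ by Schoen--Simon and hence vanishing $\scH^n$-measure; since $W_1$ is integral and therefore $n$-rectifiable, this forces $W_1=0$, contradicting the preceding paragraph.

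The main obstacle in this plan is the multiplicity-one assertion for $V_\infty$: a priori, nothing prevents two near-coincident cone components of a single optimally regular stable hypercone $C_k$ from merging to a double sheet in the varifold limit, so the technical core of the argument is to rule this out, via the combination of Schoen--Simon sheeting near a putative higher-multiplicity regular point of $V_\infty$ with the dimensional restriction on $\Sing(C_k)$. The remaining steps are essentially routine consequences of varifold compactness and the monotonicity formula.
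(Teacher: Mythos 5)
Your contradiction–compactness setup is sound, and the use of monotonicity to show $W_1\neq 0$ is fine, but the argument has a genuine gap precisely at the point you flag as the ``main obstacle,'' and your proposed fix does not close it. You claim that higher multiplicity of $V_\infty$ at a regular point would force $C_k$ to consist of $d\geq 2$ disjoint sheets that must ``meet at the origin,'' contradicting $\scH^{n-2}(\Sing(C_k))=0$. But for a cone this is no contradiction at all: the sheets can approach one another as $r\to 0$ and meet \emph{only} at the single point $\mathbf{0}$, and $\scH^{n-2}(\{\mathbf{0}\})=0$. Nor does Frankel-type connectedness of the cross-section $\Gamma_k:=C_k\cap\SSp^n$ directly give a contradiction: a connected embedded minimal hypersurface in $\SSp^n$ can a priori converge with multiplicity (with ``necks'' concentrating near the limit's singular set), and ruling this out is exactly the content of the lemma. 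In other words, your ``multiplicity-one claim'' for $V_\infty$ is essentially a restatement of what you are trying to prove, and the dimensional argument you sketch does not supply a proof.

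The paper closes this step by a completely different idea. Since $|C_j|\to m|C_\infty|$ with $m\geq 2$, one runs the by-now-standard multiplicity argument (as in \cite{Sharp17}, Section 4, Claims 5--6): the ordered sheets of $C_j$ over $C_\infty$ produce, after renormalization, a \emph{positive} Jacobi field $u$ on $\Reg(C_\infty)$. Crucially, because $C_j$ and $C_\infty$ are cones, this $u$ is homogeneous of degree one, so writing $u(r\omega)=r\,v(\omega)$ and separating variables turns the cone Jacobi equation $(\Delta_{C_\infty}+|A_{C_\infty}|^2)u=0$ into $(\Delta_\Gamma+|A_\Gamma|^2+(n-1))v=0$ on $\Gamma_\infty=C_\infty\cap\SSp^n$, i.e.\ a positive Jacobi field for the \emph{spherical} Jacobi operator. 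This forces $\Gamma_\infty$ to be a stable minimal hypersurface in $\SSp^n$, which is impossible since $\mathrm{Ric}_{\SSp^n}>0$. That positivity-of-Ricci input is what your sketch is missing; the dimension bound on $\Sing(C_k)$ alone is not enough to preclude sheets of a cone collapsing onto each other as $r\to 0$. If you want to salvage your route, you would need to replace the $\scH^{n-2}$-nullity step with this Jacobi-field/Ricci-positivity argument (or an equivalent one), at which point you are essentially reproving the lemma the paper's way; the $W_1\leq V_\infty$ bookkeeping then becomes unnecessary.
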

  \begin{proof}
   Suppose otherwise, there exist stable minimal hypercones $C_j$ with uniformly bounded density at origin such that $\mbfF_{\BB_4}(|C_j|, \scM^{\geq 2})\to 0$.  Then by \cite{SchoenSimon81} and definition of $\scM^{\geq 2}$ we have, $|C_j|\to m|C_\infty|$ for some stable minimal hypercone $C_\infty$ and integer $m\geq 2$.  Also by the same argument as \cite[Section 4, Claim 5 \& 6]{Sharp17} we see that $C_j$ induces a positive homogeneous degree $1$ Jacobi field $u\in C_{loc}^\infty(C_\infty)$.  

   The goal next is to show that such Jacobi field do not exist.
   Let $\Gamma:= C_\infty \cap \SSp^n$, and since $u$ is homogeneous degree $1$, we can write $u(r\omega) = r\cdot v(\omega)$, $r>0$, $\omega\in \Gamma$.  By rewriting the Jacobi field equation in polar coordinates we derive 
   \begin{align*}
    0 = (\Delta_{C_\infty} + |A_{C_\infty}|^2 )u & = [\partial_r^2 + \frac{n-1}{r}\partial_r + \frac{1}{r^2}(\Delta_{\Gamma} + |A_{\Gamma}|^2)] (r\cdot v) \\
    & = r^{-1}\cdot[\Delta_{\Gamma} v + (|A_{\Gamma}|^2+n-1)v ],
   \end{align*}
   where $\Delta_{\Gamma} v + |A_{\Gamma}|^2 + n-1$ is the Jacobi operator of $\Gamma\subset \SSp^n$.   Since $v>0$, this implies $\Gamma \subset \SSp^n$ is a stable minimal hypersurface, which is impossible.
  \end{proof}
  
  The following Almost Cone Rigidity Lemma follows directly from Schoen-Simon's Compactness Theorem \cite[Theorem 2]{SchoenSimon81} and monotonicity of area.
  \begin{Lem}[Almost Cone Rigidity] \label{Lem_Cone Rigidity}
   For every $\epsilon>0$ and $\Lambda>0$, there exists $\delta_3(\epsilon, \Lambda, n)\in (0, 1)$ such that if $\Sigma \subset (\BB_4^{n+1}, g)$ is a stable minimal hypersurface with $\Sigma\cap \BB_1 \neq \emptyset$ and
   \begin{align*}
    \|g - g_{Euc}\|_{C^4}\leq \delta_3,&\  & \|\Sigma\|(\BB_4) \leq \Lambda, &\  & \theta_g(\mathbf{0}, 4; \|\Sigma\|) - \theta_g(\mathbf{0}, 1; \|\Sigma\|) \leq \delta_3.    
   \end{align*}
   Then there exists a stable minimal hypercone $C\subset \RR^{n+1}$ and $m\in \NN$ such that $\mbfF_{\BB_2}(|\Sigma|, m|C|)\leq \epsilon$.
  \end{Lem}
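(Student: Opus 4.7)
The plan is a standard compactness--rigidity argument by contradiction. Suppose the lemma fails, so there exist $\epsilon_0, \Lambda > 0$ and a sequence of stable minimal hypersurfaces $\Sigma_j \subset (\BB_4^{n+1}, g_j)$ satisfying the hypotheses with $\delta_3 = 1/j$, yet with $\mbfF_{\BB_2}(|\Sigma_j|, m|C|) > \epsilon_0$ for every stable minimal hypercone $C\subset \RR^{n+1}$ and every $m\in \NN$. The first step is to apply Schoen--Simon's Compactness Theorem \cite[Theorem 2]{SchoenSimon81} to pass to a subsequence along which $|\Sigma_j|$ converges in the varifold $\mbfF$-metric to a stationary integral varifold $V_\infty$ in $(\BB_4, g_{Euc})$ whose regular support is a stable minimal hypersurface with singular set of Hausdorff codimension at least $7$. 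The smooth convergence $g_j \to g_{Euc}$ together with varifold convergence implies $\theta_{g_j}(\mathbf{0}, r; \|\Sigma_j\|) \to \theta(\mathbf{0}, r; \|V_\infty\|)$ at each fixed $r$, so the density-drop hypothesis passes to the limit to give $\theta(\mathbf{0}, 4; \|V_\infty\|) = \theta(\mathbf{0}, 1; \|V_\infty\|)$; the hypothesis $\Sigma_j\cap \BB_1\neq \emptyset$ simultaneously forces $V_\infty$ to be nontrivial on $\BB_2$.

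Next, by the rigidity case of the Euclidean monotonicity formula recalled in Section \ref{Sec_Pre}, the equality of densities on $[1, 4]$ forces $V_\infty$ to be invariant under dilation with center $\mathbf{0}$ on the annulus $\AAa(\mathbf{0}; 4, 1)$; by Schoen--Simon regularity this piece has the form $V_\infty\llcorner \AAa(\mathbf{0}; 4, 1) = V'\llcorner \AAa(\mathbf{0}; 4, 1)$ for a stationary integer cone varifold $V'$ obtained by radial extension of the annular piece to all of $\RR^{n+1}$. To propagate this equality into $\BB_1$, I would combine unique continuation for smooth sheets of $\spt V_\infty$ across $\partial \BB_1$ (matching each connected component of the regular part to the corresponding component of the cone $V'$) with the strong maximum principle of Solomon--White applied to concentric spheres, which rules out any additional component of $\spt V_\infty$ compactly contained in $\BB_1$ (for such a component would provide a closed minimal hypersurface in $\RR^{n+1}$, impossible by the convex hull property). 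This gives $V_\infty\llcorner \BB_2 = V'\llcorner \BB_2$, a nontrivial stationary integer cone varifold that we express as $m|C|$ for some stable minimal hypercone $C\subset \RR^{n+1}$ and $m\in \NN$.

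Finally, the $\mbfF$-convergence $|\Sigma_j| \to V_\infty = m|C|$ on $\BB_2$ then contradicts the standing assumption $\mbfF_{\BB_2}(|\Sigma_j|, m|C|) > \epsilon_0$, completing the proof. The step I expect to be the main technical obstacle is the propagation of the cone structure from the annulus $\AAa(\mathbf{0}; 4, 1)$ into the ball $\BB_1$: one must rule out hidden pieces of $V_\infty$ concentrated near the vertex, which requires combining unique continuation along smooth sheets with a maximum-principle argument for the radial function $|x|^2$ to exclude closed components of the support. Everything else is routine from Schoen--Simon compactness and Euclidean monotonicity.
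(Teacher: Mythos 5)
Your argument is correct and is exactly the route the paper has in mind: the paper gives no proof of Lemma \ref{Lem_Cone Rigidity}, saying only that it \emph{follows directly from Schoen--Simon's Compactness Theorem and monotonicity of area}, which is precisely your compactness-plus-rigidity-plus-propagation scheme. The one detail you should make explicit is why the limiting cone varifold has a single multiplicity $m$ (so that it can be written as $m|C|$ rather than $\sum_i m_i|C_i|$): the regular part of the link $C\cap\SSp^n$ is connected by a Frankel-type argument (and the codimension $\geq 7$ singular set cannot disconnect it), so the locally constant multiplicity is globally constant; and for ruling out a compact piece inside $\Clos(\BB_1)$ it is cleaner to test stationarity against a cut-off of the position vector field, yielding $n\|V\|(Y)=0$, than to invoke Solomon--White.
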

  
  \begin{proof}[Proof of Proposition \ref{Prop_Multi 1 in smaller scale}]
   Take $\delta_1 = \delta_3(\delta_2, n, \Lambda)/10$, given by Lemma \ref{Lem_Multi 1 cone away from multi>1 vfds} and \ref{Lem_Cone Rigidity}. 
   
   Suppose for contradiction, there exists some $\epsilon>0$ and stable minimal hypersurfaces $\Sigma_j\subset (\BB_4, g_j)$, $x_j\in \spt(\Sigma_j)\cap \BB_1$ and $r_j \in (0, 1/4)$ such that $\|g_j - g\|_{C^4}\leq \delta_1$, $\mbfF_{\BB_4}(|\Sigma_j|, \scM^{\geq 2}) \geq \epsilon$ but $\mbfF_{\BB_4}(|\eta_{x_j, r_j^{-1}}(\Sigma_j)|, \scM^{\geq 2})\to 0$ when $j\to \infty$.  By unique continuation for stable minimal hypersurfaces, $r_j\to 0$ as $j\to \infty$. By possibly replace $\epsilon$ by a smaller $\epsilon'$ and translate-rescale $\Sigma$, WLOG $x_j \equiv \mathbf{0}$.
   
   For each $j\geq 1$, let \[
    I_j:= \{\alpha\in \NN: \theta_{g_j}(\mathbf{0}, 4^{-\alpha}; \|\Sigma_j\|) - \theta_{g_j}(\mathbf{0}, 4^{-\alpha-1}; \|\Sigma_j\|) \geq \delta_1 \}\cup\{+\infty, 0\}   \]
   Since $\theta_{g_j}(\mathbf{0}, r; \|\Sigma\|)$ is non-decreasing in $r$, by the mass upper bound on $\Sigma_j$, we have $\sharp I_j \leq C(\Lambda, n)$. Write 
   \begin{align*}
    I_j=:\{0=\alpha_j^{(0)}< \alpha_j^{(1)}<...<\alpha_j^{(\sharp I_j -1)} = +\infty \}; &\ & \alpha_j^k := +\infty, \ \text{ for }k\geq \sharp I_j.      
   \end{align*}

   Consider 
   \begin{align*}
    i_0:= \sup\Big\{i\in \ZZ_{\geq 0}: \text{ for every sequence }& \{0<s_j\in [4^{-\alpha_j^{(i)}-1}, 1/2)\}_{j\geq 1},\\
    & \liminf_{j\to \infty} \mbfF_{\BB_4}(|\eta_{\mathbf{0}, s_j^{-1}}(\Sigma_j)|, \scM^{\geq 2}) > 0 \Big\}.     
   \end{align*}
   By our contradiction assumption, $0\leq i_0 < C(\Lambda, n)$ and for infinitely many $j$, $\alpha_j^{i_0}<+\infty$.  Also by passing to a subsequence in $j$, suppose $0< s_j\in [4^{-\alpha_j^{(i_0+1)}-1}, 4^{-\alpha_j^{(i_0)}-1})$ such that $\mbfF_{\BB_4}(|\eta_{s_j^{-1}}(\Sigma_j)|, \scM^{\geq 2}) \to 0$ when $j\to \infty$.
   
   Denote for simplicity $\hat{\Sigma}_j:= \eta_{4^{\alpha_j^{(i_0)}+1}}(\Sigma_j)$, by definition of $i_0$ and Schoen-Simon's Compactness Theorem \cite{SchoenSimon81}, we have 
   \begin{align}
    |\hat{\Sigma}_j| \to |\hat{\Sigma}_\infty| \ \ \ \text{ in } \BB_2, \label{Equ_hat(Sigma_j) converg multi 1}
   \end{align}
   for some multiplicity $1$ stable minimal hypersurface $\hat{\Sigma}_\infty$ in $\BB_2$; Also, $\hat{s}_j:= s_j\cdot 4^{\alpha_j^{(i_0)}+1} \in (0, 1)$ satisfies 
   \begin{align}
    \mbfF(|\eta_{\hat{s}_j^{-1}}(\hat{\Sigma}_j)|, \scM^{\geq 2}) \to 0\ \ \  \text{ as } j\to \infty. \label{Equ_hat(s_j)^-1 hat(Sigma_j) to multi 2}
   \end{align}
   And by definition of $I_j$, for every integer $l\in [0, -\log_4(\hat{s}_j)-2)$, we have 
   \begin{align}
    \theta_{g_j}(\mathbf{0}, 4^{-l}; \|\hat{\Sigma}_j\|) - \theta_{g_j}(\mathbf{0}, 4^{-l-1}; \|\hat{\Sigma}_j\|) \leq \delta_1. \label{Equ_theta diff small for hat(Sigma_j)}
   \end{align}
   By (\ref{Equ_hat(Sigma_j) converg multi 1}) and (\ref{Equ_hat(s_j)^-1 hat(Sigma_j) to multi 2}), $\hat{s}_j \searrow 0$; By (\ref{Equ_theta diff small for hat(Sigma_j)}) and Lemma \ref{Lem_Cone Rigidity}, for every $j\geq 1$ and $l\in [1, -\log_4(\hat{s}_j) -1)$, there exists some stable minimal hypercone $C_j^{(l)}$ and integer $m_j^{(l)} \in \NN$ such that 
   \begin{align}
    \mbfF_{\BB_4}(|\eta_{4^l}(\hat{\Sigma}_j)|, m^{(l)}_j|C_j^{(l)}|)\leq \delta_2. \label{Equ_4^l hat(Sigma_j) close to cone w multip}   
   \end{align}
   
   
   On the other hand, recall by \cite{Ilmanen96}, each tangent cone of a stable minimal hypersurface is a multiplicity $1$ stable hypercone. Hence, by Lemma \ref{Lem_Multi 1 cone away from multi>1 vfds}, there exists some integer $l_0 = l_0(\hat{\Sigma}_\infty)>>1$ such that \[
    \mbfF_{\BB_4}(|\eta_{4^{l_0}}(\hat{\Sigma}_\infty)|, \scM^{\geq 2}) \geq 9\delta_2.  \]
   And since $|\hat{\Sigma}_j|\to |\hat{\Sigma}_\infty|$, we can choose $j_0>>1$ such that for every $j\geq j_0$, $s_j < 4^{-l_0-3}$ and \[
    \mbfF_{\BB_4}(|\eta_{4^{l_0}}(\hat{\Sigma}_j)|, \scM^{\geq 2}) \geq 8\delta_2   \]
   Combine this with (\ref{Equ_4^l hat(Sigma_j) close to cone w multip}), we derive $\mbfF_{\BB_2}(m^{(l_0)}_j|C^{(l_0)}_j|, \scM^{\geq 2})\geq 7\delta_2$, and hence $m^{(l_0)}_j = 1$.  
   Moreover, recall for every two integral varifold $V_1, V_2 \in \cI\cV_n(\BB_2^{n+1})$ and every $\lambda>1$, we have \[
    \mbfF_{\BB_2}((\eta_\lambda)_{\sharp} V_1, (\eta_\lambda)_{\sharp} V_2) \leq \lambda \cdot\mbfF_{\BB_2}(V_1, V_2).  \]
   Thus combine with (\ref{Equ_4^l hat(Sigma_j) close to cone w multip}) we see, 
   \begin{align*}
    &\ \mbfF_{\BB_2}(m^{(l)}_j|C^{(l)}_j|, m^{(l+1)}_j|C^{(l+1)}_j|)  \\
    \leq &\ \mbfF_{\BB_2}(m^{(l)}_j|C^{(l)}_j|, |\eta_{4^{l+1}}(\hat{\Sigma}_j)|) + \mbfF_{\BB_2}(|\eta_{4^{l+1}}(\hat{\Sigma}_j)|, m^{(l+1)}_j|C^{(l+1)}_j|) \leq 4\delta_2 + \delta_2 = 5\delta_2.     
   \end{align*}
   Therefore, using Lemma \ref{Lem_Multi 1 cone away from multi>1 vfds} and starting at $l_0$, we can prove inductively that $m^{(l)}_j = 1$ for integer $l\in [l_0, -\log_4(\hat{s}_j)-1)$ and again by Lemma \ref{Lem_Multi 1 cone away from multi>1 vfds} and (\ref{Equ_4^l hat(Sigma_j) close to cone w multip}), we have $\limsup_{j\to \infty}\mbfF_{\BB_4}(|\eta_{\hat{s}_j^{-1}}(\hat{\Sigma}_j)|, \scM^{\geq 2}) \geq \delta_2 > 0$. This contradicts to (\ref{Equ_hat(s_j)^-1 hat(Sigma_j) to multi 2}).
  \end{proof}

  Following the argument in \cite{BombieriGiusti72_Harnack}, an immediate corollary of Proposition \ref{Prop_Multi 1 in smaller scale} is the Neumann-Sobolev type inequality below for stable minimal hypersurfaces,
  \begin{Cor}[Neumann-Sobolev Inequality] \label{Cor_Neum-Sobolev Ineq}
   Let $\epsilon, \Lambda>0$; $\delta_1$ be in Proposition \ref{Prop_Multi 1 in smaller scale}.  Then there exists $\beta(n, \epsilon, \Lambda)\in (0, 1/2)$, $C(n, \epsilon, \Lambda)>0$ with the following property.
   If $\Sigma$ is a stable minimal hypersurface in $(\BB_4^{n+1}, g)$ satisfying (\ref{Equ_g close, Mass bd, uniform multi 1}). Then for every $f\in W^{1,1}(\Sigma)$, we have \[
    \inf_{k\in \RR} \|f - k\|_{L^{\frac{n}{n-1}}(\BB_\beta; \|\Sigma\|)} \leq C\cdot \int_{\BB_4} |\nabla_\Sigma f|\ d\|\Sigma\|.   \]
  \end{Cor}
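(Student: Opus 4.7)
The plan is to follow the strategy of Bombieri-Giusti \cite{BombieriGiusti72_Harnack}, which derives a Neumann-Sobolev inequality from the Michael-Simon-type Sobolev inequality for minimal hypersurfaces together with uniform upper and lower bounds on the density ratio $\theta_g(\cdot, \cdot; \|\Sigma\|)$ valid at every point and every scale. Here Proposition \ref{Prop_Multi 1 in smaller scale} will play the role of the quantitative substitute for the minimizing property used crucially in \cite{BombieriGiusti72_Harnack}.

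The first step is to upgrade Proposition \ref{Prop_Multi 1 in smaller scale} to uniform density bounds: there should exist $C = C(n,\epsilon,\Lambda)>0$ such that for every $x\in\spt(\Sigma)\cap\BB_1$ and every $r\in(0,1/4)$,
\[
C^{-1}\leq \theta_g(x, r; \|\Sigma\|) \leq C.
\]
The upper bound follows from the monotonicity formula and the global mass bound $\mbfM(\Sigma)\leq\Lambda$. For the lower bound, argue by contradiction: if $\theta_{g_j}(x_j,r_j;\|\Sigma_j\|)\to 0$ along a sequence satisfying the hypotheses (\ref{Equ_g close, Mass bd, uniform multi 1}), then by \cite{SchoenSimon81} the rescalings $\eta_{x_j, r_j^{-1}}(\Sigma_j)$ subconverge to a stationary limit with vanishing density at the origin, which by monotonicity must be the zero varifold, and hence would $\mbfF_{\BB_4}$-approach $\scM^{\geq 2}$, contradicting Proposition \ref{Prop_Multi 1 in smaller scale}.

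With uniform density bounds in hand, the rest of the Bombieri-Giusti argument adapts directly. Choose a small $\beta=\beta(n,\epsilon,\Lambda)\in(0,1/2)$, take $k\in\RR$ to be the median of $f$ on $\Sigma\cap\BB_\beta$ with respect to $\|\Sigma\|$, and decompose $f-k=(f-k)_+-(f-k)_-$; by the choice of median each truncation has support of $\|\Sigma\|$-mass at most $\tfrac{1}{2}\|\Sigma\|(\BB_\beta)$. Apply the Michael-Simon Sobolev inequality to suitable cutoff versions $(f-k)_\pm\cdot\chi$ with $\chi$ supported on $\BB_{2\beta}$, then run the classical Bombieri-Giusti level-set decomposition and De Giorgi-type iteration: the density lower bound at every scale is used to control both the truncation levels and the set $\{\chi=1\}\cap\spt\Sigma$, while the contribution of $|\nabla\chi|$ on the annular region is absorbed into $\int_{\BB_4}|\nabla_\Sigma f|\,d\|\Sigma\|$ on the right-hand side. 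The main obstacle is exactly the uniform lower density bound, since in \cite{BombieriGiusti72_Harnack} this is obtained from the Euclidean isoperimetric inequality applied to the domain bounded by the minimizing $\partial E$---a tool unavailable for general stable minimal hypersurfaces, where multiplicity-$\geq 2$ degeneration would destroy the bound. Proposition \ref{Prop_Multi 1 in smaller scale} is precisely the quantitative replacement that rules this degeneration out uniformly across all scales.
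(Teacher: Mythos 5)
There is a genuine gap: the proposal misidentifies what Proposition~\ref{Prop_Multi 1 in smaller scale} needs to replace in the Bombieri–Giusti scheme, and the remaining steps as you describe them do not close.

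Uniform density bounds are not the bottleneck. The upper bound follows from monotonicity and $\mbfM(\Sigma)\leq\Lambda$, and the lower bound $\theta_g(x,r;\|\Sigma\|)\geq 1-o(1)$ is automatic at any $x\in\spt\Sigma$ for a stationary integral varifold — no quantitative multiplicity-$1$ input is required, and your contradiction argument (that a zero-density limit would $\mbfF$-approach $\scM^{\geq 2}$) does not even run, since the zero varifold is uniformly $\mbfF$-far from $\scM^{\geq 2}$ by definition. What Proposition~\ref{Prop_Multi 1 in smaller scale} actually replaces in Bombieri–Giusti is the \emph{relative} (Neumann-type) isoperimetric inequality on the hypersurface: for a decomposition $[\Sigma]=T_1+T_2$ with $\|\Sigma\|=\|T_1\|+\|T_2\|$, the interface mass $\mbfM(\partial T_1)=\mbfM(\partial T_2)$ controls $\min\{\mbfM_{\BB_\beta}(T_1),\mbfM_{\BB_\beta}(T_2)\}^{1-1/n}$. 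In \cite{BombieriGiusti72_Harnack} this is obtained from the Euclidean isoperimetric inequality applied to the minimizing domain $E$ itself, which has no analogue for a general stable hypersurface. The paper proves this isoperimetric estimate by compactness: a sequence of contradicting decompositions, after slicing and the differential inequality from Michael–Simon, has both pieces persist in the rescaled limit; Proposition~\ref{Prop_Multi 1 in smaller scale} forces the limit $\Sigma^\infty$ and its tangent cone at infinity to be multiplicity $1$ (hence $\Sigma^\infty$ connected), and the constancy theorem then forbids a nontrivial boundaryless decomposition.

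Your route of applying Michael–Simon to $(f-k)_\pm\cdot\chi$ and absorbing the $|\nabla\chi|$ term into $\int_{\BB_4}|\nabla_\Sigma f|$ is circular: the error term is $\int |f-k|\,|\nabla\chi|\,d\|\Sigma\|$, not a term in $|\nabla_\Sigma f|$, and bounding it by $\int|\nabla_\Sigma f|$ is itself a Poincaré/Neumann-type estimate on the annulus — exactly what you are trying to establish. The cutoff-plus-De-Giorgi iteration only works once the Neumann isoperimetric inequality is in hand; with that ingredient one does not even need cutoffs, since (as the paper does) choosing $k$ to be a median of $f$ on $\BB_\beta$ and combining the co-area formula with the Hardy–Littlewood–Pólya inequality gives the Neumann–Sobolev estimate directly. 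So the decisive missing step in the proposal is the isoperimetric Claim and the compactness/constancy argument behind it.
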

  \begin{proof}
   We shall first show the following,\\
   
\noindent   \textbf{Claim.} There exists $\beta(n, \epsilon, \Lambda)\in (0, 1/2)$, $\tilde{C}(n, \epsilon, \Lambda)>0$ with the following property.
   If $\Sigma$ is a stable minimal hypersurface in $(\BB_4^{n+1}, g)$ satisfying (\ref{Equ_g close, Mass bd, uniform multi 1}); $T_1, T_2 \in \mbfI^n(\BB_4)$ are integral currents such that 
   \begin{align}
   [\Sigma] = T_1 + T_2; &\  & \|\Sigma\| = \|T_1\| + \|T_2\|.  \label{Equ_Sigma decomp T_1+T_2} 
   \end{align}
   Then we have for every $x\in \spt(\Sigma)\cap \BB_1$, 
   \begin{align}
    \mbfM_{\BB_4}(\partial T_1) = \mbfM_{\BB_4}(\partial T_2) \geq \tilde{C}^{-1}\cdot min\{\mbfM_{\BB_\beta(x)}(T_1), \mbfM_{\BB_\beta(x)}(T_2)\}^{1-1/n}.  \label{Equ_Neumann Isop ineq} 
   \end{align}
   \textit{Proof of Claim.}  By Lemma \ref{Lem_Multi 1 cone away from multi>1 vfds}, by a translation and rescaling of $\Sigma$ and considering a smaller $\epsilon$, WLOG $x = \mathbf{0}$. Suppose the Claim fails, then there are stable minimal hypersurfaces $\Sigma^j$ satisfying (\ref{Equ_g close, Mass bd, uniform multi 1}) and integral currents $T^j_1, T^j_2 \in \mbfI_n(\BB_4)$ such that $[\Sigma^j] = T^j_1 + T^j_2$, $\|\Sigma^j\| = \|T^j_1\| + \|T^j_2\|$ and that
   \begin{align}
    \mbfM_{\BB_4}(\partial T^j_1) = \mbfM_{\BB_4}(\partial T^j_2) < \frac{1}{j}\cdot min\{\mbfM_{\BB_{j^{-1}}}(T^j_1), \mbfM_{\BB_{j^{-1}}}(T^j_2)\}^{1-1/n}.  \label{Equ_Reverse Neumann Isop ineq}
   \end{align}
   Also by slicing Theorem \cite{Simon83_GMT}, for each $T = T^j_i$ ($j\geq 1$, $i=1,2$) and a.e. $s\in (j^{-1}, 4)$, \[
    \mbfM_{\BB_4}(\partial (T \llcorner \BB_s)) \leq \mbfM_{\BB_s}(\partial T) + \frac{d}{ds} \mbfM_{\BB_s}(T).   \]
   Combined with (\ref{Equ_Reverse Neumann Isop ineq}) and Micheal-Simon inequality \cite[Theorem 18.6]{Simon83_GMT}, this implies 
   \begin{align}
    \mbfM_{\BB_s}(T)^{\frac{n-1}{n}} \leq \frac{S(n)}{j}\mbfM_{\BB_s}(T)^{\frac{n-1}{n}} + \frac{d}{ds} \mbfM_{\BB_s}(T),\ \ \ \text{ for a.e. }t\in (j^{-1}, 1).  \label{Equ_M_(B_t)(T) Diff ineq}   
   \end{align}
   Take $j\geq 10S(n)$ in (\ref{Equ_M_(B_t)(T) Diff ineq}) and integrate over $(j^{-1}, t)$, we derive
   \begin{align}
    \mbfM_{\BB_t}(T^j_i) \geq \delta(n)\cdot (t-j^{-1})^n, \ \ \ \forall t\in (j^{-1}, 1).  \label{Equ_Mass lower decay for T^j_i}
   \end{align}
   By Proposition \ref{Prop_Multi 1 in smaller scale} and Schoen-Simon compactness, when $j\to \infty$, $|\eta_{\mathbf{0}, j}(\Sigma^j)|\to |\Sigma^\infty|$ in $\cI\cV_n(\RR^n)$ for some multiplicity $1$ stable minimal hypersurface $\Sigma^\infty$. Moreover, since the tangent cone $C$ of $\Sigma^\infty$ at infinity is also some rescaled limit of $\Sigma^j$, we must have by Proposition \ref{Prop_Multi 1 in smaller scale} that $C$ is of multiplicity $1$ and hence $\Sigma^\infty$ is connected.
   
   By (\ref{Equ_Reverse Neumann Isop ineq}), Fleming-Federer Compactness Theorem and area monotonicity formula for minimal hypersurfaces,  \[
     \mbfF((\eta_{\mathbf{0},j})_\sharp T^j_i, T^\infty_i) \to 0,   \] 
   for some $T^\infty_i \in \mbfI_n(\RR^{n+1})$ ($i=1,2$) such that
   \begin{align*}
    \partial T^\infty_1 = \partial T^\infty_2 = 0; &\ & T^\infty_1 + T^\infty_2 = [\Sigma^\infty], &\ & \|T^\infty_1\| + \|T^\infty_2\| = \|\Sigma^\infty\|.
   \end{align*}
   And by (\ref{Equ_Mass lower decay for T^j_i}), $\spt(T^\infty_1)\cap \BB_2 \neq \emptyset$, $\spt(T^\infty_2)\cap \BB_2 \neq \emptyset$. But by constancy Theorem \cite[Theorem 26.27]{Simon83_GMT}, this is impossible for a connected stable minimal hypersurface $\Sigma^\infty$.  Hence the Claim is proved.\\
   
   Now to prove Corollary \ref{Cor_Neum-Sobolev Ineq}, suppose WLOG $f\in C^1(\Sigma)$, and denote for simplicity $p:= n/(n-1)$. Choose $k\in \RR$ such that 
   \begin{align}
    \mbfM_{\BB_\beta}(\{f>k\}),\ \mbfM_{\BB_\beta}(\{f<k\}) \leq \frac{1}{2} \mbfM_{\BB_\beta}(\Sigma).  \label{Equ_Choice of k wrt f}
   \end{align}
   Let $f^\pm:= (f-k)^\pm = max\{\pm(f-k), 0\}$; $A^\pm(t):= [\Sigma]\llcorner \{f^\pm > t\}$.  Then by co-area formula, we have
   \begin{align*}
    \int_{\BB_4\cap \Sigma} |\nabla_\Sigma f^+|\ d\|\Sigma\| & = \int_0^\infty \mbfM_{\BB_4}(\partial A^+(t))\ dt \\
    & \geq \tilde{C}^{-1}\int_0^\infty \mbfM_{\BB_\beta}(A^+(t))^{1/p}\ dt \\
    & \geq \tilde{C}^{-1}\Big(p\cdot\int_0^\infty \mbfM_{\BB_\beta}(A^+(t)) t^{p-1}\ dt \Big)^{1/p} \\
    & = C^{-1}\|f^+\|_{L^p(\BB_\beta; \|\Sigma\|)}, 
   \end{align*}
   where the first inequality follows from the Claim and choice of $k$; the second inequality follows from the following Hardy-Littlewood-Polya Inequality: If $p>1$ and $\eta\geq 0$ is a non-increasing function on $(0, +\infty)$, then \[
    \Big(\int_0^\infty \eta^{1/p}\ dt \Big)^p \geq p\int_0^\infty \eta(t)t^{p-1}\ dt.   \]
   Similarly one derive $\int_{\BB_4} |\nabla_\Sigma f^-|\ d\|\Sigma\| \geq C^{-1}\|f^-\|_{L^p(\BB_\beta; \|\Sigma\|)}$.  Hence Corollary \ref{Cor_Neum-Sobolev Ineq} is proved.   
  \end{proof}

  By De Giorgi-Moser iteration process \cite{GilbargTrudinger01} and the abstract John-Nirenberg Ineq \cite[Section 4]{BombieriGiusti72_Harnack}, Corollary \ref{Cor_Neum-Sobolev Ineq} implies the following Harnack inequality for stable minimal hypersurfaces.
  \begin{Cor}[Harnack Inequality] \label{Cor_Harnack Ineq}
   Let $\Sigma$ be in Corollary \ref{Cor_Neum-Sobolev Ineq}. There exists $\eta(\epsilon, n, \Lambda)\in (0, 1)$ such that if $0<u\in W^{1,2}_{loc}(\Sigma)$ satisfies $\Delta_\Sigma u - \Lambda u \leq 0$ on $\Sigma$ in the distribution sense, then for every $p\in (0, \frac{n}{n-2})$, every $x\in \spt(\Sigma)\cap \BB_1$ and every $r\in (0, \eta)$, we have \[
     \big(\frac{1}{\|\Sigma\|(\BB_r(x))}\int_{\Sigma\cap \BB_r(x)} |u|^p\ d\|\Sigma\| \big)^{1/p} \leq C(p, \epsilon, \Lambda, n)\cdot\inf_{\Sigma\cap \BB_r(x)} u.   \]
  \end{Cor}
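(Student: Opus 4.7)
The strategy follows the classical Bombieri--Giusti adaptation of De Giorgi--Nash--Moser theory to stable minimal hypersurfaces: a Moser iteration on negative powers of $u$ controls $\sup u^{-1}$ by a low-$L^p$ mean, a logarithmic Caccioppoli estimate feeds into the abstract John--Nirenberg lemma of \cite[Section 4]{BombieriGiusti72_Harnack} to bridge between positive and negative $L^p$-norms at some small exponent $s_0 > 0$, and a final upward Moser iteration on positive powers extends the admissible exponent range up to $n/(n-2)$. A crucial preliminary observation is that by Proposition \ref{Prop_Multi 1 in smaller scale}, for every $x \in \spt(\Sigma) \cap \BB_1$ and $r \in (0,1/4)$ the rescaled hypersurface $\eta_{x,r^{-1}}(\Sigma)$ still belongs to the class satisfying (\ref{Equ_g close, Mass bd, uniform multi 1}) with uniform parameter $\Psi(\epsilon\mid n,\Lambda)$, so that Corollary \ref{Cor_Neum-Sobolev Ineq} applies at every relevant scale with a constant depending only on $(\epsilon,\Lambda,n)$; the radius threshold $\eta$ in the statement will absorb this rescaling.

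For the sup-estimate on $u^{-1}$, set $v := u^{-\beta}$ for $\beta > 0$. The chain rule combined with $\Delta_\Sigma u \leq \Lambda u$ shows $\Delta_\Sigma v \geq -\Lambda \beta\, v$ distributionally, so $v$ is a non-negative sub-solution. Testing with $\phi = v^{k-1}\zeta^2$ for $k \geq 1$ and a cutoff $\zeta \in C_c^1(\BB_r(x))$ produces, after absorbing the cross-term, the Caccioppoli estimate
\[
\int_\Sigma \zeta^2 |\nabla v^{k/2}|^2 \leq C(k,\Lambda)\int_\Sigma v^k \bigl(|\nabla \zeta|^2 + \zeta^2\bigr).
\]
Combined with the $W^{1,2} \hookrightarrow L^{2n/(n-2)}$ Sobolev embedding on $\Sigma$ (derivable from Michael--Simon and hence from Corollary \ref{Cor_Neum-Sobolev Ineq} after rescaling), the standard Moser iteration yields
\[
\sup_{\Sigma \cap \BB_{r/2}(x)} u^{-1} \leq C\Bigl(\tfrac{1}{\|\Sigma\|(\BB_r(x))}\int_{\Sigma \cap \BB_r(x)} u^{-q}\, d\|\Sigma\|\Bigr)^{1/q}
\]
for every $q > 0$ and $r \in (0,\eta)$.

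For the bridging to positive $L^p$-norms, test the super-solution inequality directly against $\phi = u^{-1}\zeta^2 \geq 0$; after absorbing the cross-term via Cauchy--Schwarz one obtains the logarithmic estimate $\int_\Sigma \zeta^2 |\nabla \log u|^2 \leq 4\int|\nabla \zeta|^2 + 2\Lambda \int \zeta^2$. Applying Corollary \ref{Cor_Neum-Sobolev Ineq} to $f = \log u$ then provides a uniform $L^{n/(n-1)}$ mean-oscillation bound for $\log u$ on every ball $\BB_r(x)$ with $r < \eta$. The abstract John--Nirenberg argument of \cite[Section 4]{BombieriGiusti72_Harnack} is purely measure-theoretic and requires only this mean-oscillation bound together with the doubling of $\|\Sigma\|$ (supplied by monotonicity of area and the uniform mass bound); it outputs some $s_0 = s_0(\epsilon,\Lambda,n) > 0$ and a constant $C$ with
\[
\Bigl(\tfrac{1}{\|\Sigma\|(\BB_r(x))}\int u^{s_0}\Bigr)\Bigl(\tfrac{1}{\|\Sigma\|(\BB_r(x))}\int u^{-s_0}\Bigr) \leq C.
\]
Combined with the sup-estimate of the previous paragraph applied with $q = s_0$, this establishes the weak Harnack inequality at the exponent $p = s_0$.

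Finally, to extend to arbitrary $p \in (0, n/(n-2))$, one runs a second Moser iteration, this time on positive powers $u^q$ with $0 < q < 1/2$: testing against $\phi = u^{2q-1}\zeta^2$ and using Cauchy--Schwarz produces $\int |\nabla u^q|^2 \zeta^2 \leq C(q,\Lambda)\int u^{2q}(|\nabla \zeta|^2 + \zeta^2)$, which combined with the $L^{2n/(n-2)}$-Sobolev embedding yields the reverse-Hölder-type bound $\|u\|_{L^{2qn/(n-2)}(\BB_{r'})} \leq (C/(r-r')^2)^{1/(2q)} \|u\|_{L^{2q}(\BB_r)}$. A finite Moser ladder starting from $p = s_0$ and iterating by the factor $n/(n-2)$ reaches any prescribed $p < n/(n-2)$; the iteration breaks down precisely when the running exponent would cross $1$ (equivalently $q = 1/2$), which is the sign-change in the Caccioppoli step and accounts for the sharp threshold $n/(n-2)$. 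The main technical subtlety is not any single iteration step, but ensuring that all Caccioppoli and Sobolev constants stay uniform under rescaling to smaller concentric balls --- and this is precisely what Proposition \ref{Prop_Multi 1 in smaller scale} delivers, by ruling out degeneration into the higher-multiplicity class $\scM^{\geq 2}$ at any intermediate scale and so keeping every rescaled hypersurface inside the regime where Corollary \ref{Cor_Neum-Sobolev Ineq} applies with a scale-invariant constant.
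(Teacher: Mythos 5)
Your proposal is correct and follows exactly the route the paper itself indicates: the paper's proof of Corollary \ref{Cor_Harnack Ineq} is a one-line reference to De Giorgi--Moser iteration (\cite{GilbargTrudinger01}) and the abstract John--Nirenberg lemma of \cite[Section 4]{BombieriGiusti72_Harnack}, applied with the Neumann--Sobolev inequality of Corollary \ref{Cor_Neum-Sobolev Ineq} as input, and your write-up simply unpacks that reference in the standard Bombieri--Giusti form (negative-power Moser iteration for the infimum, logarithmic Caccioppoli plus John--Nirenberg for the bridge, positive-power Moser ladder up to the critical exponent $n/(n-2)$), correctly identifying Proposition \ref{Prop_Multi 1 in smaller scale} as the mechanism keeping the constants scale-invariant. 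One minor imprecision worth flagging: the $W^{1,2}\hookrightarrow L^{2n/(n-2)}$ embedding used in the two Moser steps comes from the Michael--Simon inequality directly and does not require the multiplicity-one hypothesis, whereas Corollary \ref{Cor_Neum-Sobolev Ineq} (a Poincar\'e--Sobolev inequality with the $\inf_k$ term) is what genuinely needs Proposition \ref{Prop_Multi 1 in smaller scale} and is used only in the John--Nirenberg bridging step; phrasing it as the former being ``derivable from'' the latter inverts the logical dependence, though it does not affect the argument.
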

  \begin{Rem}
   By a slight modification, a similar Neumann-Sobolev Inequality and Harnack inequality still holds for minimal hypersurfaces with bounded index, where the constant will also depend on the index upper bound.
   
   On the other hand, the assumption in (\ref{Equ_g close, Mass bd, uniform multi 1}) that $\Sigma$ is away from varifolds with multiplicity $\geq 2$ at scale $1$  CANNOT be dropped.  A typical counterexample is a sequence of pairs of parallel hyperplanes $V_j := |L^{(1)}_j| + |L^{(2)}_j|$ converging to a multiplicity $2$ hyperplane, and a sequence of functions on $\spt(V_j)$ which is $1$ on $L^{(1)}_j$ and $1/j$ on $L^{(2)}_j$.  Counterexample in unstable situation could be a blow down sequence of catenoids converging to a multiplicity $2$ hyperplane and a positive harmonic function which approaches $1$ on one end and $0$ on the other end.
  \end{Rem}

\section{Growth Rate Lower Bound for Positive Jacobi Fields} \label{Sec_Growth Rate Lower Bd}
  The goal of this section is to prove the following.
  \begin{Prop} \label{Prop_Growth Rate Lower Bd}
   Let $\epsilon \in (0, 1)$, $\Lambda>0$, $n\geq 7$; \[
    \gamma_n := -\frac{n-2}{2} + \sqrt{(\frac{n-2}{2})^2 - n+1}   \] 
   For every $\gamma>\gamma_n$, there exists $C = C(\epsilon, \Lambda, n, \gamma)>0$ and $\tau_0(\epsilon, \Lambda, n, \gamma)\in (0, 1/2)$ such that the following hold.
   
   Let $\Sigma \subset (\BB_4^{n+1}, g)$ be a two-sided stable minimal hypersurface satisfying (\ref{Equ_g close, Mass bd, uniform multi 1});
   $u\in W^{1,2}_{loc}(\Sigma)$ be a positive super-solution of $(\Delta_\Sigma + |A_\Sigma|^2 - \Lambda)u = 0$, i.e.   \[
    \int_\Sigma \nabla_\Sigma u \cdot \nabla_\Sigma \xi + (\Lambda-|A_\Sigma|^2)u\xi\ \geq 0,\ \ \ \forall \xi\in C^1_c(\Sigma, \RR_{\geq 0}).   \]
   Then for every $p\in \Sing(\Sigma)\cap \BB_1$ and every $r\in (0, \tau_0)$, we have  
   \begin{align}
    \inf_{\Sigma\cap \BB_r(p)} u \geq C\cdot \|u\|_{L^1(\Sigma \cap \BB_{\tau_0}(p))}\cdot r^\gamma.
   \end{align}
  \end{Prop}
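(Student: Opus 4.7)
The argument follows the strategy of Simon \cite{Simon08}, adapted from the area-minimizing setting to stable minimal hypersurfaces via the Harnack inequality of Corollary \ref{Cor_Harnack Ineq}. The exponent $\gamma_n$ is the larger root of the indicial equation $\mu^{2}+(n-2)\mu+(n-1)=0$, which governs the critical radial Jacobi mode on a stable minimal cone $C\subset \RR^{n+1}$ where the stability inequality on the link $\Sigma_\infty\subset\SSp^n$ is saturated; the condition $n\ge 7$ is exactly what makes the discriminant non-negative.

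\textbf{Step 1 (Weighted integral inequality).} Setting $\rho(x):=|x-p|$, I would test the weak super-solution inequality against $\phi:=u^{1-2\alpha}\eta^{2}\rho^{-2\beta}$ and simultaneously apply the stability inequality \eqref{Equ_2nd Variation for area in general} to $\zeta:=u^{1-\alpha}\eta\,\rho^{-\beta}$, where $\alpha\in(0,1/2)$ is small, $\eta$ is a cutoff, and $\beta$ is a parameter. For a specific linear combination of the two inequalities the contributions $\int_\Sigma|A_\Sigma|^{2}u^{2-2\alpha}\eta^{2}\rho^{-2\beta}$ cancel exactly, and the cross terms are absorbed via Cauchy-Schwarz. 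What survives has the form
\begin{equation*}
 Q(\alpha,\beta)\!\int_\Sigma u^{2-2\alpha}\eta^{2}\rho^{-2\beta-2}\,d\scH^n \;\le\; C\!\int_\Sigma u^{2-2\alpha}\bigl(|\nabla_\Sigma\eta|^{2}\rho^{-2\beta}+\Lambda\eta^{2}\rho^{-2\beta}\bigr)d\scH^n,
\end{equation*}
where $Q(\alpha,\beta)$ is a quadratic in $\beta$ whose roots, in the limit $\alpha\to 0$, are precisely $-\gamma_n$ and $-\bar\gamma_n:=-(n-2)/2-\sqrt{((n-2)/2)^{2}-(n-1)}$, so $Q(\alpha,\beta)>0$ whenever $\gamma:=-\beta\in(\gamma_n,\bar\gamma_n)$ and $\alpha$ is small enough. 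Choosing $\eta$ supported on dyadic annuli $\AAa(p;2r,r)$, the above converts (via coarea) into a differential inequality $r\Phi'(r)\ge -Cr^{\sigma}\Phi(r)$, $\sigma>0$, for $\Phi(r):=r^{-n-2\gamma}\!\int_{\Sigma\cap\BB_r(p)}u^{2-2\alpha}\,d\scH^n$, and integration from $\tau_0$ down to $r$ yields $\Phi(r)\ge c\,\Phi(\tau_0)$ for $r\in(0,\tau_0)$.

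\textbf{Step 2 (Upgrading to pointwise via Harnack).} The integrated lower bound, combined with the mass bound $\|\Sigma\|(\BB_{\tau_0})\le\Lambda$ and H\"older's inequality, forces the existence of a point $q\in\Sigma\cap \AAa(p;\tfrac{3r}{2},\tfrac{r}{2})$ with $u(q)\gtrsim r^{\gamma}\,\|u\|_{L^{1}(\Sigma\cap\BB_{\tau_0}(p))}$. By Proposition \ref{Prop_Multi 1 in smaller scale}, the hypothesis \eqref{Equ_g close, Mass bd, uniform multi 1} is inherited under the rescaling $\eta_{q,r^{-1}}$, so Corollary \ref{Cor_Harnack Ineq} applies to $u$ --- which is a positive super-solution of $\Delta_\Sigma u-\Lambda u\le 0$ since $|A_\Sigma|^{2}u\ge 0$ --- in a ball of radius comparable to $r$ around $q$. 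A standard Harnack chain across a cover of $\Sigma\cap\partial\BB_{r}(p)$ by such balls, followed by one more application of Harnack in $\BB_r(p)$ itself, propagates the lower bound at $q$ to a pointwise lower bound on all of $\Sigma\cap\BB_{r/4}(p)$, yielding the claim after slightly shrinking $\tau_0$.

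\textbf{Main obstacle.} The principal technical difficulty will be making the integration by parts of Step 1 rigorous in the presence of $\Sing(\Sigma)$, since neither $\phi$ nor $\zeta$ is compactly supported in $\Reg(\Sigma)$. As in \cite[\S 3]{Simon08}, one multiplies the weights by a cutoff vanishing in a shrinking neighborhood of $\Sing(\Sigma)$ and passes to the limit using $\scH^{n-7+\epsilon}(\Sing(\Sigma))=0$ from \cite{SchoenSimon81}; the restriction $\gamma>\gamma_n$ ensures that $\rho^{-2\beta}$ remains integrable near $\Sing(\Sigma)$ with respect to $\scH^{n}\llcorner\Sigma$, and the capacity-type approximation error vanishes. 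A secondary subtlety, that Corollary \ref{Cor_Harnack Ineq} requires the "away from higher multiplicity" condition \eqref{Equ_g close, Mass bd, uniform multi 1} to hold \emph{after} rescaling, is exactly what Proposition \ref{Prop_Multi 1 in smaller scale} supplies and accounts for the role of the threshold $\tau_0$.
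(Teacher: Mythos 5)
Your plan diverges substantially from the paper's. The paper deduces Proposition \ref{Prop_Growth Rate Lower Bd} from the scale-by-scale Growth Lemma \ref{Lem_Growth Jac Field} (either imported from \cite{Simon08} or reproved in Appendix \ref{Sec_Growth Lem} by a blow-up/contradiction argument), iterating over dyadic scales and classifying scales as ``good'' or ``bad'' depending on whether $\Sigma$ is near a cone there; the finiteness of bad scales follows from mass monotonicity plus Lemma \ref{Lem_Cone Rigidity}, Lemma \ref{Lem_Multi 1 cone away from multi>1 vfds}, and Proposition \ref{Prop_Multi 1 in smaller scale}, and Harnack (Corollary \ref{Cor_Harnack Ineq}) bridges the bad scales. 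You instead propose a direct weighted integral estimate. Your Step 2 (locating a good point and propagating by Harnack, with Proposition \ref{Prop_Multi 1 in smaller scale} supplying the scale-invariance of the multiplicity-one hypothesis) is sound and close in spirit to what the paper does at the iteration stage.

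Step 1, however, has a fatal sign error. Both the super-solution inequality tested with $\xi=u^{1-2\alpha}\eta^2\rho^{-2\beta}$ and the stability inequality tested with $\zeta=u^{1-\alpha}\eta\rho^{-\beta}$ produce an \emph{upper} bound on the same quantity $\int_\Sigma |A_\Sigma|^2 u^{2-2\alpha}\eta^2\rho^{-2\beta}$, so no nonnegative linear combination of the two makes that term ``cancel exactly.'' More fundamentally, the exponent $\gamma_n$ originates in the indicial ODE $U''+\frac{n-1}{r}U'+\frac{n-1}{r^2}U\le 0$ on a cone, and the constant $n-1$ there is the upper bound $\mu_1(\Gamma)\le-(n-1)$ on the first eigenvalue of $-\Delta_\Gamma-|A_\Gamma|^2$ on the link $\Gamma=C\cap\SSp^n$ (Lemma \ref{Lem_Lower Bound on Reg Scale}, citing \cite{Lawson69_Rigidity_QuadraticMH,Perdomo02_1stEigenV_ClifTorus,ZhuJ18_1stEigenV_JacOper_Sphere}). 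This is \emph{not} a stability statement: stability of the cone gives the opposite-direction bound $\mu_1(\Gamma)\ge-\frac{(n-2)^2}{4}$ (the Hardy constant), so inserting the stability inequality into your Step 1, even if the algebra could be fixed, would produce the double root $-\frac{n-2}{2}$ rather than $\{\gamma_n,\bar\gamma_n\}$. Since $\gamma_n>-\frac{n-2}{2}$ and the bound is sharp on the Simons cone (where $u=r^{\gamma_n}$ is a positive Jacobi field), an estimate of the form $\inf_{\BB_r} u\gtrsim r^{\gamma}\|u\|_{L^1}$ is actually \emph{false} for $\gamma\in\big(-\frac{n-2}{2},\gamma_n\big)$; so any derivation yielding the Hardy exponent must be wrong. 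Your phrase ``where the stability inequality on the link is saturated'' reflects this misattribution: what is being used is the unconditional instability of non-totally-geodesic minimal links in $\SSp^n$, not stability. To repair the argument you would need to inject that spectral fact, which is exactly what the paper's Lemma \ref{Lem_Growth Jac field, Cone} does via the first Dirichlet eigenfunction $\varphi_\Omega$ on (a slightly shrunk) link; a pure stability-plus-super-solution manipulation cannot reach $\gamma_n$.
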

  Note that the exponent $\gamma_n$ is optimal and can be realized by a homogeneous positive Jacobi field on Simons cone.
  
  The key for the proof is the following Growth Lemma.  Let \[
   \scC_n^{=1}(\Lambda):= \{|C|: C\subset \RR^{n+1} \text{ stable minimal hypercone with }\|C\|(\BB_4)\leq \Lambda\}\setminus\{\text{hyperplanes}\},  \]
  be the space of multiplicity $1$ varifolds associated to nontrivial stable minimal hypercones.   For a function $\phi$ over $\Sigma$, $p>0$ and a domain $\Omega \subset \Sigma$,  let \[
   \|\phi\|_{L^p_*(\Omega; \|\Sigma\|)} := \big(\|\Sigma\|(\Omega)^{-1}\cdot \int_\Omega |\phi|^p\ d\|\Sigma\| \big)^{1/p}.  \]
  \begin{Lem}[Growth Lemma] \label{Lem_Growth Jac Field}
   Let $\Lambda>0$, $\gamma>\gamma_n$ be fixed as in Prop \ref{Prop_Growth Rate Lower Bd}.   There exists $\delta(n, \Lambda, \gamma)\in (0, 1)$ and $r_0(n, \Lambda, \gamma)\in (0, 1/2)$ with the following property.
   
   Let $\Sigma\subset (\BB_{4r_0^{-1}}, g)$ be a two-sided stable minimal hypersurface with normal field $\nu$, satisfying
   \begin{align}
    \Sing(\Sigma)\ni \mathbf{0}, &\  & \|g-g_{Euc}\|_{C^4}\leq \delta, &\ & \mbfF_{\BB_4}(|\Sigma|, \scC_n^{=1}(\Lambda)) \leq \delta.  \label{Equ_Almost Cone Assumption}
   \end{align} 
   Let $0< u\in W^{1,2}_{loc}(\Sigma)$ be a positive supersolution of $\Delta_\Sigma u + (|A_\Sigma|^2-\delta) u = 0$ in the distribution sense on $\Sigma$, in other words, 
   \begin{align} 
    \int \nabla_\Sigma u \cdot \nabla_\Sigma \xi + (\delta -|A_\Sigma|^2)u\xi\ d\|\Sigma\|\geq 0,\ \ \ \forall \xi\in C^1_c(\Sigma, \RR_{\geq 0}).  \label{Equ_u satisf Perturb Jac supersol}
   \end{align}
   Then $u\in L^1(\BB_1; \|\Sigma\|)$ and \[ 
    \|u\|_{L^1_*(\BB_r; \|\Sigma\|)} \geq \|u\|_{L^1_*(\BB_1; \|\Sigma\|)}\cdot r^\gamma, \ \ \ \forall r\in [r_0/2, r_0].  \]
  \end{Lem}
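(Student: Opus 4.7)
The plan is a proof by contradiction and blow-up, reducing the one-step growth estimate to a sharp ODE analysis for positive supersolutions of the Jacobi equation on a multiplicity-one stable minimal hypercone, combined with a spectral bound on the link.

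\emph{Setup and extraction of limits.} First I would fix $r_0 \in (0, 1/2)$ small (to be chosen at the end) and suppose, toward a contradiction, that for every $\delta > 0$ there exists a triple $(\Sigma, u, r)$ satisfying (\ref{Equ_Almost Cone Assumption})--(\ref{Equ_u satisf Perturb Jac supersol}) with $r \in [r_0/2, r_0]$ that violates the conclusion. Taking $\delta_j \searrow 0$, normalizing $\|u_j\|_{L^1_*(\BB_1; \Sigma_j)} = 1$, and applying Schoen--Simon compactness together with hypothesis (\ref{Equ_Almost Cone Assumption}) yields $|\Sigma_j| \to |C|$ in the $\mbfF$-metric on $\BB_4$ for some nontrivial multiplicity-one stable minimal hypercone $C \subset \RR^{n+1}$, with smooth convergence on compact subsets of $C_\Reg$. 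Combined with the Harnack inequality (Corollary \ref{Cor_Harnack Ineq}), the $L^1_*$ normalization, and Schauder estimates, this produces a $C^{2,\alpha}_{loc}$ limit $u_\infty$ on $C_\Reg \cap \BB_{3/2}$ that is positive by the strong maximum principle and satisfies $(\Delta_C + |A_C|^2) u_\infty \leq 0$. Passing to a further subsequence, $r_j \to r^* \in [r_0/2, r_0]$ and, by the Harnack estimate propagated into the limit, $\|u_\infty\|_{L^1_*(\BB_1; C)} \geq c_0 > 0$ for a constant $c_0 = c_0(n, \Lambda)$.

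\emph{Cone analysis and spectral bound.} Next I would analyze $u_\infty$ on the cone by separation of variables. Writing points of $C$ as $r\omega$ with $\omega \in \Gamma := C \cap \SSp^n$, let $\phi_1 > 0$ denote the principal eigenfunction of $L_\Gamma := \Delta_\Gamma + |A_\Gamma|^2$ with eigenvalue $\mu_1$ on $\Gamma_\Reg$, and set $f(r) := \int_{\Gamma_\Reg} u_\infty(r\omega) \phi_1(\omega)\, d\scH^{n-1}(\omega)$. The supersolution inequality together with self-adjointness of $L_\Gamma$ reduces to
\[
f''(r) + \frac{n-1}{r} f'(r) + \frac{\mu_1}{r^2} f(r) \leq 0, \quad r \in (0, 1].
\]
Substituting $f = r^{\gamma^+} g$ with $\gamma^+ := -\tfrac{n-2}{2} + \tfrac{1}{2}\sqrt{(n-2)^2 - 4\mu_1}$ converts this into $(r^\alpha g')' \leq 0$ with $\alpha = 1 + \sqrt{(n-2)^2 - 4\mu_1} > 0$, from which standard ODE comparison gives $f(r) \geq c\, f(1)\, r^{\gamma^+}$ on $(0, 1]$. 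The essential algebraic input is the spectral bound $\mu_1 \geq n-1$, equivalently $\gamma^+ \leq \gamma_n$, which I would prove by testing the Rayleigh quotient for $\mu_1$ against $\phi = |A_\Gamma|$: combining Kato's inequality $|\nabla|A_\Gamma||^2 \leq |\nabla A_\Gamma|^2$ with the integrated Simons identity for minimal $\Gamma \subset \SSp^n$,
\[
\int_\Gamma |A_\Gamma|^4 = \int_\Gamma |\nabla A_\Gamma|^2 + (n-1)\int_\Gamma |A_\Gamma|^2,
\]
gives $\mu_1 \geq (n-1) + (\int |\nabla A_\Gamma|^2 - \int |\nabla|A_\Gamma||^2)/\int|A_\Gamma|^2 \geq n-1$. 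Since $C$ is not a hyperplane, $|A_\Gamma| \not\equiv 0$ and the Rayleigh quotient is well-defined. A sandwich using $\phi_1 \asymp 1$ on compact subsets of $\Gamma_\Reg$ then upgrades the ODE bound to
\[
\|u_\infty\|_{L^1_*(\BB_r; C)} \geq c_1\, r^{\gamma_n}\, \|u_\infty\|_{L^1_*(\BB_1; C)}, \quad r \in (0, 1],
\]
with $c_1 = c_1(n, \Lambda) > 0$.

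\emph{Contradiction and main obstacle.} Finally, I would pass to the limit at the fixed scale $r^*$: the $C^{2,\alpha}_{loc}$ convergence on $C_\Reg$, uniform Harnack bounds, and codimension-$7$ smallness of $\Sing(C)$ together justify $\|u_j\|_{L^1_*(\BB_{r_j}; \Sigma_j)} \to \|u_\infty\|_{L^1_*(\BB_{r^*}; C)}$. Combining the assumed failure with the previous estimate yields $(r^*)^\gamma \geq c_0 c_1\, (r^*)^{\gamma_n}$, i.e., $(r^*)^{\gamma - \gamma_n} \geq c_0 c_1$. Since $r^* \leq r_0$ and $\gamma > \gamma_n$, choosing $r_0$ so that $r_0^{\gamma - \gamma_n} < c_0 c_1$ contradicts this. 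The hardest step I expect will be the spectral bound $\mu_1 \geq n-1$: it requires handling possible codimension-$7$ singularities of the link $\Gamma$ where $|A_\Gamma|$ may blow up, so the Simons identity and Kato inequality must be integrated through these singularities via cutoff arguments that exploit the stability-type $L^2$ control on $|A_\Gamma|$ and $|\nabla A_\Gamma|$. A related delicate point is the passage to the limit of $L^1_*$ norms across a ball meeting $\Sing(C)$, which is precisely the role of the Harnack estimate developed in Section \ref{Sec_Harnack Ineq}.
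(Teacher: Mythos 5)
Your outline follows the same blow-up-to-cone strategy as the paper's proof in Appendix A, and the main ingredients (Harnack inequality, separation of variables on the cone, spectral bound on the link, ODE comparison) all appear, but there are two genuine gaps.

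The first concerns the passage to the limit. You claim that the positive supersolutions $u_j \in W^{1,2}_{loc}(\Sigma_j)$ converge in $C^{2,\alpha}_{loc}$ on $C_\Reg$ ``by Schauder estimates.'' But a weak supersolution has no a priori $C^{2,\alpha}$ (or even $C^0$) regularity: the inequality $\Delta_\Sigma u + (|A_\Sigma|^2 - \delta)u \leq 0$ says nothing about controlling $\Delta u$, only that it has a sign modulo lower order terms, so Schauder does not apply. The paper resolves this with Lemma \ref{Lem_Solving equ with two-side barrier}: it solves a Dirichlet problem on an exhausting sequence of smooth domains of $\Sigma\cap\BB_1$ to produce an \emph{exact} solution $v$ of $\Delta_\Sigma v + (|A_\Sigma|^2 - \delta)v = 0$ sandwiched between $\frac{1}{2}\inf_{\BB_1}u$ and $u$; the positivity of $u$ yields coercivity of the quadratic form (a Fischer--Colbrie--Schoen argument) while $(1 + \mathrm{dist})/2$ serves as a subsolution barrier. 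It is then $v_j$, not $u_j$, that converges in $C^2_{loc}$ to a positive Jacobi field on $C_\Reg$, and the Harnack inequality applied to $u_j$ makes $u_j$ and $v_j$ interchangeable in the $L^1_*$ estimates. Without some such replacement you cannot extract the limit you need.

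The second gap concerns the spectral bound and the eigenfunction. Proving $\mu_1 \geq n-1$ via Kato plus the integrated Simons identity is the right algebraic idea, but you acknowledge (without resolving) that the integration by parts needs care on a possibly singular link $\Gamma$. More critically, even granting the spectral bound, the final cone estimate requires the ratio $\sup\phi_1 / \|\phi_1\|_{L^1}$ to be bounded by a constant depending \emph{only on $n$ and $\Lambda$}; otherwise the constant $c_1$ in your inequality $\|u_\infty\|_{L^1_*(\BB_r;C)} \geq c_1 r^{\gamma_n}\|u_\infty\|_{L^1_*(\BB_1;C)}$ depends on the particular limiting cone $C$, and the choice $r_0 = r_0(n,\Lambda,\gamma)$ at the end cannot be made uniformly. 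The paper handles both problems simultaneously in Lemma \ref{Lem_Lower Bound on Reg Scale}: a compactness argument shows there is a single regularity scale $\rho_0(n,\Lambda)$ so that the Dirichlet eigenvalue on $\Omega_{\rho_0}(C)\subset\Gamma_\Reg$ is already $\leq -(n-1)$ for every $C\in\scC_n^{=1}(\Lambda)$. This produces a compactly supported eigenfunction $\varphi_\Omega$ that lives in a region where $|A_\Gamma|\leq 2\rho_0^{-1}$, giving the uniform $\sup/L^1$ bound by interior elliptic estimates, and making the boundary term in the integration by parts have the correct sign. (For the spectral inequality itself the paper cites Lawson/Perdomo/Zhu rather than reproving it.) You would need an analogue of this device for your estimate to close.
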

  In \cite[Theorem 1]{Simon08}, a general Growth Theorem is established for multiplicity $1$ class of minimal submanifolds.  It follows directly from Proposition \ref{Prop_Multi 1 in smaller scale} that minimal hypersurfaces in Euclidean balls together with their blow-up limits forms a multiplicity $1$ class of hypersurfaces. Hence Simon's result applies here to obtain Lemma \ref{Lem_Growth Jac Field}.   
  For sake of completeness, we include in the Appendix \ref{Sec_Growth Lem} a direct proof using Harnack Inequality in Corollary \ref{Cor_Harnack Ineq}.
  
  \begin{proof}[Proof of Proposition \ref{Prop_Growth Rate Lower Bd}]
   WLOG $p=\mathbf{0}$.
   Let $\tau:= min\{r_0^{N+1}: r_0^N \geq \eta(\epsilon, \Lambda, n)/4\}$ be given by Lemma \ref{Lem_Growth Jac Field} and Corollary \ref{Cor_Harnack Ineq}; $\tau_0:= \tau^2$. By definition, $\tau, \tau_0$ only depend on $\epsilon, \Lambda, n$.  Let \[
    J:= \{j\in \ZZ_{\geq 2}: \eta_{\tau^j}^{-1}(\Sigma)\cap \BB_{4r_0^{-1}} \subset (\BB_{4r_0^{-1}}, \tau^{-2j}\eta_{\tau^j}^* g) \text{ satisfies }(\ref{Equ_Almost Cone Assumption}), u\circ \eta_{\tau^j} \text{ satisfies }(\ref{Equ_u satisf Perturb Jac supersol}) \}.   \]
   By area monotonicity, Lemma \ref{Lem_Cone Rigidity} and Proposition \ref{Prop_Multi 1 in smaller scale}, we see that 
   \begin{align}
    \sharp (\NN \setminus J) \leq N(n, \Lambda, \epsilon, \gamma)<+\infty.  \label{Equ_Bad Scale Uniformly Bounded}   
   \end{align}
   Now for every $r\in (0, \tau_0)$, let $l\in \ZZ_{\geq 2}$ be such that $\tau^{l+1}\leq r < \tau^l$.  Denote for simplicity $I_u(r):= \|u\|_{L^1_*(\BB_r; \|\Sigma\|)}$.  Note that by Corollary \ref{Cor_Harnack Ineq}, for every $j\geq 1$,
   \begin{align}
    I_u(\tau^{j+1}) \geq \frac{1}{\|\Sigma\|(\BB_{\tau^{j+1}})} \inf_{\BB_{\tau^j}} u \geq c(n, \Lambda, \epsilon)I_u(\tau^j).  \label{Equ_Rough Est for I_u(tau^j) btwn Scales}
   \end{align}
   And for every $j\in J$, by Lemma \ref{Lem_Growth Jac Field}, we have better estimate,
   \begin{align}
    I_u(\tau^{j+1}) \geq I_u(\tau^j) \cdot \tau^\gamma .  \label{Equ_Refined Est for I_u(tau^j) btwn Scales}
   \end{align}
   Combine Corollary \ref{Cor_Harnack Ineq}, (\ref{Equ_Bad Scale Uniformly Bounded}), (\ref{Equ_Rough Est for I_u(tau^j) btwn Scales}) and (\ref{Equ_Refined Est for I_u(tau^j) btwn Scales}) we have,
   \begin{align*}
    \inf_{\Sigma\cap \BB_r} u & \geq c(n, \Lambda, \epsilon) \|u\|_{L^1_*(\BB_{\tau^l; \|\Sigma\|})} \\
    & = c(n, \Lambda, \epsilon)\Big(\prod_{j=2}^{l-1} \frac{I_u(\tau^{j+1})}{I_u(\tau^j)} \Big) \cdot \|u\|_{L^1_*(\BB_{\tau_0}; \|\Sigma\|)} \\
    & = c(n, \Lambda, \epsilon)\Big(\prod_{2\leq j\leq l-1; j\in J} \frac{I_u(\tau^{j+1})}{I_u(\tau^j)} \Big)\cdot \Big(\prod_{2\leq j\leq l-1; j\notin J} \frac{I_u(\tau^{j+1})}{I_u(\tau^j)} \Big) \cdot \|u\|_{L^1_*(\BB_{\tau_0}; \|\Sigma\|)} \\
    & \geq c(n, \Lambda, \epsilon)\cdot(\tau^\gamma)^{l-\sharp (\NN\setminus J)}\cdot c(n, \Lambda, \epsilon)^{\sharp (\NN\setminus J)} \cdot \|u\|_{L^1_*(\BB_{\tau_0}; \|\Sigma\|)} \\
    & \geq c(n, \Lambda, \epsilon, \gamma)\|u\|_{L^1_*(\BB_{\tau_0}; \|\Sigma\|)} \cdot r^\gamma .
   \end{align*}
   This finishes the proof of the Proposition.
  \end{proof}
  
  Since $\gamma_n< -1 <0$, a direct corollary of Proposition \ref{Prop_Growth Rate Lower Bd} is,
  \begin{Cor} \label{Cor_pos super Jac field div near sing}
   Let $(M, g)$ be a smooth Riemannian manifold (not necessarily complete), $\Lambda>0$.  Let $\Sigma \subset M$ be a two-sided stable minimal hypersurface with optimal regularity.  Let $u \in C^2_{loc}(\Sigma)$ be a positive function such that \[
     \Delta_\Sigma u + |A_\Sigma|^2 u - \Lambda u \leq 0,   \]
   on $\Sigma$.  Then for every $x\in \Sing(\Sigma)$, \[
     \liminf_{y\to x} u(y) = +\infty.   \]
  \end{Cor}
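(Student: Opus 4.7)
The plan is to deduce the corollary from a single application of Proposition \ref{Prop_Growth Rate Lower Bd} after rescaling $\Sigma$ and $u$ at a fixed singular point $x \in \Sing(\Sigma)$, using the fact that $\gamma_n < -1 < 0$ to force divergence of the resulting power $r^\gamma$. If $n \leq 6$ then $\Sing(\Sigma) = \emptyset$ by Schoen--Simon regularity, so we may assume $n \geq 7$. I would fix once and for all an exponent $\gamma \in (\gamma_n, 0)$.

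Next, in a normal coordinate chart centered at $x$, I identify a small geodesic ball with a Euclidean ball and, for each small $\rho > 0$, consider the rescaled surface $\Sigma_\rho := \eta_{x, \rho^{-1}}(\Sigma)$ equipped with the rescaled metric $g_\rho := \rho^{-2}(\eta_{x, \rho^{-1}})^* g$, together with the pulled-back function $u_\rho := u \circ \eta_{x, \rho^{-1}}^{-1}$. Standard scaling gives $\|g_\rho - g_{Euc}\|_{C^4(\BB_4)} \to 0$, a uniform mass bound $\|\Sigma_\rho\|(\BB_4) \leq \Lambda_0$ from the monotonicity formula, and a distributional inequality
\[
 \Delta_{\Sigma_\rho} u_\rho + (|A_{\Sigma_\rho}|^2 - \rho^2 \Lambda) u_\rho \leq 0,
\]
so the effective Jacobi coefficient becomes arbitrarily small with $\rho$. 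The one substantive point to check is the multiplicity-one condition $\mbfF_{\BB_4}(|\Sigma_\rho|, \scM^{\geq 2}) \geq \epsilon_0$ uniformly for small $\rho$. I will argue this by contradiction: a sequence $\rho_j \searrow 0$ violating it would, by Schoen--Simon compactness \cite{SchoenSimon81}, yield a subsequential integral varifold limit $V_\infty$ that is a tangent cone of $\Sigma$ at $x$ and that carries a component of multiplicity $\geq 2$; this contradicts the multiplicity-one property of tangent cones of stable minimal hypersurfaces \cite{Ilmanen96}. This produces a scale $\rho_0$ and constants $\epsilon_0, \Lambda_0 > 0$ at which $\Sigma_{\rho_0}$ satisfies (\ref{Equ_g close, Mass bd, uniform multi 1}).

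With these preparations in place I apply Proposition \ref{Prop_Growth Rate Lower Bd} to $(\Sigma_{\rho_0}, u_{\rho_0})$ at $\mathbf{0} \in \Sing(\Sigma_{\rho_0}) \cap \BB_1$ with the chosen $\gamma$, which produces constants $C, \tau_0 > 0$ such that
\[
 \inf_{\Sigma_{\rho_0} \cap \BB_r(\mathbf{0})} u_{\rho_0} \geq C \cdot \|u_{\rho_0}\|_{L^1(\Sigma_{\rho_0} \cap \BB_{\tau_0})} \cdot r^\gamma
\]
for every $r \in (0, \tau_0)$. Since $u$ is positive and continuous on the regular part, the $L^1$-factor is strictly positive, and since $\gamma < 0$ the right-hand side diverges as $r \searrow 0$. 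Undoing the rescaling $\eta_{x, \rho_0^{-1}}$ translates this into $\liminf_{y \to x} u(y) = +\infty$, as required. The main obstacle is the multiplicity-one argument in the middle paragraph; everything else amounts to bookkeeping of scaling conventions and a direct invocation of the Proposition.
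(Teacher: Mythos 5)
Your proposal is essentially the intended argument; the paper gives no proof of this corollary and simply calls it a ``direct corollary'' of Proposition~\ref{Prop_Growth Rate Lower Bd}, so what you are doing is precisely the bookkeeping that is left to the reader: localize near $x$, rescale, check the hypotheses~(\ref{Equ_g close, Mass bd, uniform multi 1}), apply the Proposition with any $\gamma\in(\gamma_n,0)$, and use $r^\gamma\to\infty$. That structure is correct, and the reductions (ruling out $n\le 6$ by Schoen--Simon, absorbing the mass bound and the rescaled coefficient $\rho^2\Lambda$ into a single $\Lambda$) are fine.

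The one place where you assert rather than prove is the verification of the multiplicity-one condition $\mbfF_{\BB_4}(|\Sigma_\rho|,\scM^{\ge 2})\ge\epsilon_0$. You argue that a failing sequence $\rho_j\searrow 0$ would give a tangent cone ``that carries a component of multiplicity $\ge 2$.'' This step is not automatic: $\scM^{\ge 2}$ is not a closed subset of $\cI\cV_n(\BB_4)$ (the disjoint minimal hypersurfaces in its definition carry no stability or curvature bound, so they need not subconverge), and a limit point with density $\ge 2$ at some $x_\infty\in\Clos(\BB_2)$ need not have a genuine multiplicity-$\ge 2$ sheet --- it could be a multiplicity-one cone with a high-density singularity. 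The clean route, and the one the paper has already set up for exactly this purpose, is Lemma~\ref{Lem_Multi 1 cone away from multi>1 vfds}: any multiplicity-one stable minimal cone $C$ with bounded density satisfies $\mbfF_{\BB_4}(|C|,\scM^{\ge 2})\ge 10\delta_2(n,\Lambda)>0$. Combining this with Schoen--Simon compactness for the $\Sigma_{\rho_j}$ and the multiplicity-one property of tangent cones from \cite{Ilmanen96}, the sequence $|\Sigma_{\rho_j}|$ would converge in $\mbfF$ to $|C|$ for such a $C$, directly contradicting $\mbfF_{\BB_4}(|\Sigma_{\rho_j}|,\scM^{\ge 2})\to 0$ by the triangle inequality. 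Replacing your ``multiplicity $\ge 2$ component'' assertion by an invocation of Lemma~\ref{Lem_Multi 1 cone away from multi>1 vfds} closes the gap and makes the proof tight; everything else in your write-up is correct.
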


\section{Proof of Main Theorem } \label{Sec_Pf Main Thm}
  Throughout this section, let $E\subset \RR^{n+1}$ be a closed viscosity mean convex cone with connected nonempty interior. Let $\cE := E\cap \SSp^n$ be a closed viscosity mean convex domain in $\SSp^n$.
  Let's start with the following regularity Lemma.
  \begin{Lem} \label{Lem_Regular Minzer w Mean Convex Cone Bdy}
   Suppose $\partial [E]$ is not minimizing in $E$ (This is automatically true if $E$ is not a Caccioppoli set).  Then there exists a closed subset $P\subset E$ such that,
   \begin{enumerate} [(i)]
   \item $P\supset E\setminus \BB_1$ and $P\cap \BB_1 \subset \Int(E)$;
   \item $\partial P \cap \BB_1$ is a smooth radial graph over $\Int(E)\cap \SSp^n$, i.e., there exists $v\in C^\infty(\Int(\cE); (0, 1))$ such that \[
     P\cap \BB_1 = \{r\omega: \omega\in \Int(\cE), v(\omega)\leq r < 1\};   \] 
   \item $\partial [P]\llcorner \BB_1$ is area-minimizing in $\BB_1\cap \Int(E)$.
   \end{enumerate}
  \end{Lem}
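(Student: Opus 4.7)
The strategy is to realize $P$ as a minimizer of a perimeter problem with obstacle $E$ and exterior data fixed as $E\setminus \BB_1$, then extract the radial-graph structure from the dilation invariance of $E$, and finally upgrade to $C^\infty$ regularity by means of the main Jacobi-field divergence result of the paper (Corollary~\ref{Cor_pos super Jac field div near sing}).

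\emph{Construction and obstacle regularity.}  I first minimize $\cP(\cdot; \BB_2)$ among the class
\[
  \cA := \{Q \text{ Caccioppoli in }\RR^{n+1} : Q\subset E,\ Q\setminus \BB_1 = E\setminus \BB_1\}.
\]
The inclusion $Q\subset E$ together with the fixed exterior gives a uniform perimeter bound, so a direct method together with Fleming-Federer compactness produces a minimizer $P\in \cA$. Outside $\BB_1$ we have $P\equiv E$, which is the first half of (i). The assumption that $\partial[E]$ is not minimizing in $E$ supplies a competitor beating $Q=E$, hence $P\neq E$ as Caccioppoli sets. Where the obstacle is inactive, $\partial[P]\llcorner \BB_1$ is perimeter-minimizing in $\Int(E)\cap \BB_1$ with no constraint, which is (iii). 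When $E$ is only viscosity mean convex rather than $C^{1,1}$-regular, I first approximate $E$ by $C^{1,1}$-optimally regular mean convex cones via Remark~\ref{Rem_Viscosity mean convex}(3) and pass to the limit through Lemma~\ref{Lem_Cptness of perimeter minzer in E_j}.

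\emph{Strict interior containment.}  To prove $P\cap \BB_1\subset \Int(E)$, suppose for contradiction that some $x_0\in \partial P\cap \partial E\cap \BB_1$. Viscosity mean convexity provides smooth mean convex inner barriers at $x_0$, and the Solomon-White strong maximum principle \cite{SolomonWhite89_Maxim} (whose barrier version applies here by \cite{IlmanenSternbergZiemer98}) forces $\partial P$ to coincide with $\partial E$ on an open neighborhood of $x_0$ inside $\partial E$. The singular set of $\partial E$ has codimension $\geq 7$ and hence does not disconnect $\partial E$, so this coincidence propagates along the full connected component of $\partial E$ containing $x_0$; combined with the connectedness of $\Int(E)$, this forces $P=E$ as Caccioppoli sets, contradicting $P\neq E$. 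Thus $\partial P\cap B_1\subset \Int(E)$.

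\emph{Radial graph via dilation.}  For any $\lambda>1$, $\eta_\lambda(P)$ lies in $\eta_\lambda(E)=E$, agrees with $E$ outside $\BB_\lambda$, and is itself a minimizer in the obstacle class at scale $\lambda$. The cut-paste identity
\[
  \cP(P\cap \eta_\lambda(P); \BB_\lambda) + \cP(P\cup \eta_\lambda(P); \BB_\lambda) = \cP(P; \BB_\lambda) + \cP(\eta_\lambda(P); \BB_\lambda)
\]
combined with the minimality of both $P$ and $\eta_\lambda(P)$ makes $P\cap \eta_\lambda(P)$ and $P\cup \eta_\lambda(P)$ minimizers as well, and the strong maximum principle on their common stationary boundary pieces then forces $\eta_\lambda(P)\subset P$ for every $\lambda>1$ (the opposite inclusion is excluded by the exterior ordering $\eta_\lambda(P)\supset E\setminus \BB_\lambda\supsetneq E\setminus \BB_1\subset P$). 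This monotonicity in $\lambda$ is precisely the statement that $P\cap \BB_1$ is the radial super-graph of a lower semicontinuous function $v:\Int(\cE)\to [0,1]$. The sphere $\partial \BB_1$ is a strict mean-convex inner barrier which gives $v<1$ on $\Int(\cE)$, and a tangent-cone-at-origin argument using (i) rules out $\mathbf{0}\in \partial P$, which gives $v>0$.

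\emph{Smoothness and main difficulty.}  On the regular part of $\partial P\cap \BB_1$, the function $\phi := X\cdot \nu_{\partial P}$ is the Jacobi field generated by the dilation family $\{\eta_\lambda\}$. The strict monotonicity $\eta_\lambda(P)\subsetneq P$ from the previous step together with the strong maximum principle for Jacobi fields forces $\phi$ to have a definite sign on each connected component, which after choosing $\nu$ appropriately we may take to be $\phi>0$. The bound $|\phi|\leq |X|\leq 1$ on $\BB_1$ makes $\phi$ bounded. If $x\in \Sing(\partial P)\cap \BB_1$ existed, Corollary~\ref{Cor_pos super Jac field div near sing} applied to $\phi$ on a neighborhood of $x$ in $\partial P$ would yield $\liminf_{y\to x}\phi(y)=+\infty$, contradicting the boundedness of $\phi$. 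Hence $\Sing(\partial P)\cap \BB_1=\emptyset$, so $\partial P\cap \BB_1$ is smooth and $v\in C^\infty(\Int(\cE);(0,1))$. The sharpest technical difficulty in this proof lies in the interior containment step (i)—propagating a local maximum-principle coincidence $\partial P\equiv \partial E$ across the low-dimensional singular set of $\partial E$ when $E$ is only viscosity mean convex; the dilation comparison of (ii) and the smoothness upgrade via Corollary~\ref{Cor_pos super Jac field div near sing} are essentially direct once (i) is available.
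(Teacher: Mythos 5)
Your strategy matches the paper's at a high level: minimize perimeter with obstacle, prove the minimizer pulls off $\partial E$ inside $\BB_1$, compare with dilations to get a radial graph, and use Corollary~\ref{Cor_pos super Jac field div near sing} to kill the singular set. However, there is a genuine gap at the cone vertex $\mathbf{0}$, and a secondary weakness in how the dilation ordering is obtained.

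\emph{The origin.} Your proof of (i) assumes that at any contact point $x_0\in\partial P\cap\partial E\cap\BB_1$ ``viscosity mean convexity provides smooth mean convex inner barriers at $x_0$.'' This fails at $x_0=\mathbf{0}$: since $\partial[E]$ is assumed not minimizing in $E$, the cone $\partial E$ is not a hyperplane, hence $\partial E$ is genuinely singular at its apex and admits no smooth barrier there. So your argument only rules out contact away from the origin, i.e.\ gives $\partial P\cap\partial E\cap\BB_1\subset\{\mathbf{0}\}$, not the full statement (i). The paper treats $\mathbf{0}$ as a separate case: if $\mathbf{0}\in\spt(P)$, one blows up $P$ at $\mathbf{0}$ (using the compactness Lemma~\ref{Lem_Cptness of perimeter minzer in E_j}), obtaining a perimeter-minimizing tangent cone in $E$; its spherical cross section is a stationary hypersurface in $\SSp^n$ lying inside the mean convex region $\cE$ and touching $\partial\cE$, and the strong maximum principle on $\SSp^n$ then forces equality with $\cE$, a contradiction. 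This step cannot be absorbed into the barrier argument and is the crux: your final ``tangent-cone-at-origin argument \ldots rules out $\mathbf{0}\in\partial P$'' is only a pointer, not a proof, and it must come \emph{before} the dilation comparison, because that comparison needs $\mathbf{0}\notin P$ to even get started.

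\emph{The dilation ordering.} The cut-paste relation is an inequality, $\cP(A\cap B)+\cP(A\cup B)\le\cP(A)+\cP(B)$, not an equality, and more importantly this machinery alone does not tell you which of $\eta_\lambda(P)\subset P$ or $P\subset\eta_\lambda(P)$ holds; your parenthetical about ``exterior ordering'' does not pin it down. The paper instead performs a sliding argument: set $r:=\sup\{s>0:\eta_t(P)\supset P\ \forall t\in(0,s)\}$. For $t<\mathrm{dist}(\mathbf{0},P)$ (positive precisely because the origin case has been excluded) one has $\eta_t(P)\cap\BB_t=\emptyset$ so $\eta_t(P)\supset P$ trivially; then $r<1$ produces an interior touching of two one-sided minimizing boundaries, excluded by Ilmanen's strong maximum principle, so $r=1$. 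This is the clean mechanism for the ordering and hence for the nonnegativity of $\phi=X\cdot\nu_P$. Your smoothness upgrade via Corollary~\ref{Cor_pos super Jac field div near sing} is then correct and is the same as the paper's.
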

  \begin{proof}
   Let's first further assume that $\partial E\cap \SSp^n$ is $C^{1,1}$ optimally regular mean convex in $\SSp^n$. Consider minimize perimeter in $\BB_2$ among family of Caccioppoli sets \[
     \{Q: E\setminus \BB_1\subset Q \subset E\},   \]
   to get a Caccioppoli set $P\subset E$. Since $\partial [E]$ is not minimizing in $E$, we must have $P\neq E$.  By \cite[Chapter 2]{Lin85}, since $E$ and $\Clos(\BB_1)$ are $C^{1, 1}$ mean convex away from a low dimensional subset, we know that $\|\partial [P]\|$ is a stationary integral varifold in $(\RR^{n+1}\setminus \partial E )\cup \BB_1$.  Then by Solomon-White's strong maximum principle \cite{SolomonWhite89_Maxim} and Ilmanen's strong-maximum principle \cite{Ilmanen96}, either $E = P$ (which is impossible) or $\spt(P)\cap \partial E\cap \BB_1 \subset \{\mathbf{0}\}$. 
   If $\mathbf{0}\in \spt(P)$, then by Lemma \ref{Lem_Cptness of perimeter minzer in E_j}, the tangent cone of $P$ at $\mathbf{0}$ is also perimeter-minimizing in $E$, hence cannot coincide with $E$, this also violates the strong maximum principle for cross sections of $E$ and $P$ in $SS^n$.
   
   Now we have shown that $\spt(P)\cap \BB_1 \subset \Int(E)$, since $P$ minimize perimeter in $E\cap \BB_1$, we have $\dim \Sing(\partial [P]) \leq n-7$.  In particular, WLOG $P = \spt[P]$. Consider the rescalings of $P$ and 
   \begin{align}
     r:= \sup\{s>0 : \eta_t(P)\supset P,  \forall t\in (0, s)\} \in (0, 1].  \label{Equ_First touching scale}   
   \end{align}
   If $r<1$, then there exists $y\in \BB_1\cap \partial P\cap \eta_r(\partial P)$ being a touching point of $P\subset \eta_r(P)$ on the boundary, which violates strong maximum principle \cite{Ilmanen96}.  Hence $r=1$.
   
   Let $\nu_P$ be the inward pointed normal field defined on $\Reg(\partial P)$. Consider the Jacobi field $\phi:= X\cdot \nu_P$ on $\Reg(\partial P)$ induced by rescalings, where $X$ is the position vector.  Since $r=1$ in (\ref{Equ_First touching scale}), we know that $\phi\geq 0$ on $\Reg(\partial P\cap \BB_1)$.  And since $\phi$ satisfies the Jacobi field equation \[
     \Delta \phi + |A_{\partial P}|^2 \phi = 0,    \]
   by strong maximum principle, either $\phi \equiv 0$ (which then implies $P$ being a cone, contradicts to that $\mathbf{0}\notin P$), or $\phi>0$ on regular part of $\partial P$.  Then by Corollary \ref{Cor_pos super Jac field div near sing} and that $\phi\leq 1$, we know that $\Sing(\partial P\cap \BB_1) = \emptyset$.  
   And thus, by $\phi>0$ and $r=1$ in (\ref{Equ_First touching scale}), we know that the projection $x\mapsto x/|x|$ is injective restricting to $\partial P\cap \BB_1$ and onto $\cE \subset \SSp^n$, which indicates that $\partial P\cap \BB_1$ is a radial graph.
   
   Now for a general $E$, by Corollary \ref{Cor_C^1,1 optimal reg mean convex approx}, we can approximate $\cE$ from interior by a family of connected $C^{1,1}$-optimally regular mean convex closed subset $\cE_j\subset \SSp^n$.  Let $P_j\subset C(\cE_j)$ be the corresponding closed subset given above, which is perimeter minimizing in $C(\cE_j)\cap \BB_1$ and has stationary boundary in $(\RR^{n+1}\setminus \partial C(\cE_j)) \cup \BB_1$, with inward pointed normal vector $\nu_j$ on $\partial P_j \cap \BB_1$, and $\phi_j:= r\partial_r \cdot \nu_j$ be the Jacobi field on $\partial P_j \cap \BB_1$ induced by rescaling.  
   Let $j\to \infty$, by Lemma \ref{Lem_Cptness of perimeter minzer in E_j}, after passing to a subsequence, $P_j$ converges to some closed subset $P \subset E$ locally in the Hausdorff distant sense, where $P\supset E\setminus \BB_1$ and $|\partial P|$ is a stable minimal hypersurface in $(\RR^{n+1}\setminus \partial E)\cup \BB_1$.  Thus by mean convexity of $\Clos(\BB_1)$, we have $\Clos(P\cap \BB_1) \cap \SSp^n = \cE$.  
   Also, on $\Reg(\partial P\cap \BB_1)$, $\phi_j$ subconverges to $\phi = X \cdot \nu_{\partial P}\geq 0$, which is a Jacobi field on $\partial P\cap \BB_1$.  By strong maximum principle, either $\phi \equiv 0$ (which implies that $P$ is a cone, and then $E = P$ with boundary being a stable minimal hypercone, reduces to the previous situation), or $\phi >0$ everywhere.  Repeat the argument above using Corollary \ref{Cor_pos super Jac field div near sing}, we know that $P$ is also a smooth radial graph over $\cE\subset \SSp^n$. 
  \end{proof}
  
  Now we are ready to prove the following stronger version of Theorem \ref{Thm_Intro_MinSmooth}.
  \begin{Thm} \label{Thm_Pf_MinSmooth}
   Let $E\subset \RR^{n+1}$ be closed and invariant under scaling. Suppose $C = \partial [E]$ is a stationary integral cone and is area-minimizing in $E$. Then there exists $S\subset \Int(E)$ minimizing in $E$ and foliates $\Int(E)$ by rescaling.
  \end{Thm}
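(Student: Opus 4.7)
The plan is to construct $S$ as a subsequential limit of rescaled Plateau minimizers from Lemma \ref{Lem_Regular Minzer w Mean Convex Cone Bdy} applied to smooth inner approximations of $E$, and then to upgrade to smoothness using the divergence result Corollary \ref{Cor_pos super Jac field div near sing} applied to the rescaling Jacobi field.

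First, I would use Corollary \ref{Cor_C^1,1 optimal reg mean convex approx} to interior-approximate $\cE := E \cap \SSp^n$ by a sequence of connected $C^{1,1}$-optimally regular mean convex domains $\cE_j \nearrow \cE$, and set $E_j := C(\cE_j)$. Since $E_j \subsetneq E$ and $\partial E$ is area-minimizing in $E$, $\partial [E_j]$ cannot be area-minimizing in $E_j$: otherwise the strong maximum principle applied to cross-sections in $\SSp^n$ would force $\cE_j = \cE$. Thus Lemma \ref{Lem_Regular Minzer w Mean Convex Cone Bdy} applies and yields Caccioppoli sets $P_j \subset E_j$ such that $\partial P_j \cap \BB_1 = \{v_j(\omega)\omega : \omega \in \Int\cE_j\}$ is a smooth radial graph, with $\partial [P_j]\llcorner \BB_1$ area-minimizing in $\BB_1 \cap \Int E_j$. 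The cone structure of $E_j$ makes the Plateau problem scale-invariant, so $\lambda P_j$ is the analogous minimizer on $\BB_\lambda$ for every $\lambda > 0$.

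Next, I rescale to $\tilde P_j := d_j^{-1} P_j$ where $d_j := \min_\omega v_j(\omega) = \mathrm{dist}(\mathbf 0, \partial P_j)$, so that $\partial \tilde P_j$ has distance $1$ to the origin and is area-minimizing in $E_j \cap \BB_{d_j^{-1}}$. A diagonal application of Lemma \ref{Lem_Cptness of perimeter minzer in E_j} on growing balls extracts a subsequential $\mbfF$-limit $\tilde P_j \to P_\infty$, with $P_\infty \subset E$ area-minimizing in $E$ on every bounded region, and $\partial P_\infty$ realizing distance $1$ to $\mathbf 0$. On $\Reg(\partial P_\infty)$ consider the Jacobi field $\phi := X \cdot \nu$ induced by the scaling variation. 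Because each $\tilde P_j$ is a smooth radial graph whose dilates $\{\lambda \tilde P_j\}_{\lambda>0}$ foliate $\Int E_j \setminus \{\mathbf 0\}$, the corresponding $\phi_j$ are positive; passing to the limit gives $\phi \geq 0$. If $\phi \equiv 0$ then $\partial P_\infty$ would be a cone, contradicting the distance-$1$ normalization, so by the strong maximum principle $\phi > 0$ on $\Reg(\partial P_\infty)$. Since $\phi \leq |X|$ is bounded on compact sets, Corollary \ref{Cor_pos super Jac field div near sing} then forces $\Sing(\partial P_\infty) = \emptyset$, making $S := \partial P_\infty$ a properly embedded smooth hypersurface; positivity of $\phi$ upgrades $\{\lambda S\}_{\lambda>0}$ to a smooth foliation of $\Int E \setminus \{\mathbf 0\}$, and a calibration argument shows each leaf is area-minimizing in $E$.

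The hardest part will be control of the diagonal limit: verifying that $d_j \to 0$ (so $\tilde P_j$ is area-minimizing on an exhausting sequence of balls) and that $\partial P_\infty$ is not a cone (so $\phi \not\equiv 0$). Both are ensured by the full minimality of $\partial E$ in $E$ — if $d_j$ stayed bounded below, the unrescaled limit would itself be a smooth strictly-inside minimizer bounded away from $\mathbf{0}$, and a shorter version of the same Jacobi-field argument would already yield the desired $S$. The multiplicity-one hypothesis needed to invoke Corollary \ref{Cor_pos super Jac field div near sing} is secured by the graphicality of each $\tilde P_j$ together with Proposition \ref{Prop_Multi 1 in smaller scale}.
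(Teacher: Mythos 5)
Your overall architecture matches the paper's: interior approximation by $C^{1,1}$-optimally-regular mean convex cones, Lemma \ref{Lem_Regular Minzer w Mean Convex Cone Bdy} to get smooth minimizing radial graphs $P_j$, rescale by $d_j:=\mathrm{dist}(\mathbf{0}, P_j)$, take the $\mbfF$-limit, and use positivity of the scaling Jacobi field $\phi = X\cdot\nu$ plus Corollary \ref{Cor_pos super Jac field div near sing} to kill singularities. That much is right.

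The genuine gap is your treatment of $d_j\to 0$. You write that if $d_j$ stayed bounded below, ``the unrescaled limit would itself be a smooth strictly-inside minimizer bounded away from $\mathbf{0}$, and a shorter version of the same Jacobi-field argument would already yield the desired $S$.'' This is not so. The unrescaled limit $P_\infty$ always satisfies $P_\infty = E$ outside $\BB_1$ (since each $P_j\supset E_j\setminus\BB_1$ and $E_j\to E$), so $\partial P_\infty$ coincides with $\partial E$ outside $\BB_1$: it is not strictly inside $E$, it is not generally smooth (it inherits all singularities of $\partial E$ outside $\BB_1$), it does not foliate $\Int(E)$, and it is only perimeter-minimizing on a bounded region. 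In fact the scaling Jacobi field would vanish identically on the open set $\Reg(\partial E)\setminus\BB_1$, which, combined with $\phi\geq 0$ and strong maximum principle, forces $\phi\equiv 0$ and $P_\infty$ a cone --- i.e.\ it points toward a contradiction rather than toward the desired $S$, but you would still need to rule out the alternative of $\phi\equiv 0$ in order to close that contradiction. The paper's mechanism for forcing $d_j\to 0$ is the perimeter control built into Corollary \ref{Cor_C^1,1 optimal reg mean convex approx}: one can (and the paper does) choose the approximation so that $\limsup_j\|\partial\cE_j\|(\SSp^n) = \|\partial\cE\|(\SSp^n)$, which combined with $\|\partial[P_j]\|(\BB_1)\leq\|\partial[E_j]\|(\BB_1)$ (comparison with $E_j$ itself as a competitor) yields $\|\partial[P_\infty]\|(\BB_1)\leq\|\partial[E]\|(\BB_1)$; since $E$ is itself minimizing in $E$, this forces $P_\infty = E$ by unique continuation, whence $d_j\to 0$. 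You never invoke this perimeter bound, and without it your diagonal limit is not controlled.

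Two smaller points. First, your justification that $\partial[E_j]$ is not minimizing in $E_j$ appeals to ``the strong maximum principle applied to cross-sections,'' but since $\partial\cE_j$ and $\partial\cE$ are disjoint (not touching), this should be a Frankel-type argument (disjoint minimal hypersurfaces in $\SSp^n$ must intersect, by positive Ricci), as the paper uses between the nested $\partial\cE_j$'s. Second, the claim that the rescaled limit $\hat P_\infty$ minimizes perimeter in $E$ (rather than just being a limit of minimizers in the shrinking domains $E_j$) requires the additional step --- present in the paper --- of first showing $\spt(\hat P_\infty)\subset\Int(E)$ via the Solomon--White/Ilmanen maximum principle, and you should make that explicit.
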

  \begin{proof}
   By Corollary \ref{Cor_C^1,1 optimal reg mean convex approx}, there exists a increasing sequence of connected closed $C^{1,1}$ optimally regular mean convex domain $\cE_j \subset \Int(\cE)$ approximating $\cE$ such that $\limsup_j\|\partial \cE_j\|(\SSp^n) = \|\partial \cE\|(\SSp^n)$.  And since $\cE_j \subset \Int(\cE_{j+1})$, by Frankel property of minimal hypersurfaces in $\SSp^n$, for $j>>1$, $E_j:= C(\cE_j)$ do not have stationary boundary, hence are not perimeter minimizing in $E_j$.  
   By Lemma \ref{Lem_Regular Minzer w Mean Convex Cone Bdy}, there exist closed subsets $P_j \subset C(\cE_j)$ such that $P_j\cap \BB_1$ are smooth minimizing radial graphs in $\Int(C(\cE_j))\cap \BB_1$. When $j\to \infty$, by Lemma \ref{Lem_Cptness of perimeter minzer in E_j}, $P_j$ $\mbfF$-subconverges to some Caccioppoli set $P_\infty \subset E$ with $\partial P_\infty$ a stable minimal hypersurface in $(\RR^{n+1}\setminus \partial E)\cup \BB_1$ and $P_\infty$ coincide with $E$ on $\BB_1^c$. Moreover, as a minimizing boundary in $C(\cE_j)$, $\partial [P_j]$ satisfies \[
    \|\partial P_j\|(\BB_1)\leq \|\partial [E_j]\|(\BB_1).  \]
   Hence by taking $j\to \infty$ we have,
   \begin{align}
    \|\partial [P_\infty]\|(\BB_1) \leq \|\partial [E]\|(\BB_1).  \label{Equ_P_infty minz perim in E}   
   \end{align}
   Since $E$ is perimeter minimizing in $E$, (\ref{Equ_P_infty minz perim in E}) implies that $P_\infty$ also minimizes perimeter in $\BB_2\cap E$. Hence by unique continuation of stable minimal hypersurfaces, $P_\infty = E$.  Therefore, when $j\to \infty$,  \[
     d_j:= dist(\mathbf{0}, P_j) \searrow 0.   \]
   
   By Lemma \ref{Lem_Cptness of perimeter minzer in E_j}, when $j\to \infty$, $\hat{P}_j:= \eta_{1/d_j}(P_j)$ are perimeter-minimizing in $\BB_{1/d_j}\cap E_j$ and hence $\mbfF$-subconverges to some $\hat{P}_\infty \subset E$, minimizing perimeter in $\Int(E)$ and has $\partial \hat{P}_\infty$ a stable minimal hypersurface in $\RR^{n+1}$.  Since \[
    dsit(\mathbf{0}, \spt(\hat{P}_\infty)) = \lim_{j\to \infty} dist(\mathbf{0},\hat{P}_j) = 1,  \]
   we know that $\partial \hat{P}_\infty$ is not supported on $\partial E$. Hence by strong maximum principle \cite{SolomonWhite89_Maxim, Ilmanen96}, $\spt(\hat{P}_\infty) \subset \Int(E)$ and $\hat{P}_\infty$ also minimize perimeter in $E$. 
   
   Let $\hat{\nu}_j$ be the inward pointed normal field of $\hat{P}_j$ on $\partial \hat{P}_j\cap \BB_{1/d_j}$, $1\leq j\leq \infty$; $\psi_j:= X\cdot \hat{\nu}_j$ be the Jacobi field induced by rescaling defined on $\Reg(\partial \hat{P}_j)\cap \BB_{1/d_j}$.  Since $\hat{P}_j$ are also radial graphs, when $j<\infty$, $\psi_j>0$.
   When $j\to \infty$, $\psi_j$ subconverges to $\psi_\infty$ on $\Reg(\partial \hat{P}_\infty)$.  Hence $\psi_\infty\geq 0$.  Again by strong maximum principle for Jacobi field equation, either $\psi_\infty \equiv 0$ (which implies $\hat{P}_\infty$ is a cone and violates that $dist(\mathbf{0}, \hat{P}_\infty) = 1$), or $\psi_\infty > 0$ everywhere.  Then by Corollary \ref{Cor_pos super Jac field div near sing}, $\Sing(\partial \hat{P}_\infty) = \emptyset$ and $S = \partial \hat{P}_\infty \subset \Int(E)$ is area-minimizing in $E$.  Furthermore, $\psi_\infty >0$ everywhere implies, by the same argument as Lemma \ref{Lem_Regular Minzer w Mean Convex Cone Bdy}, that $S$ is a smooth radial graph over $\cE$, hence foliates $\Int(E)$ by rescalings.  
  \end{proof}

  Now we turn to the non-minimizing case,
  \begin{Thm} \label{Thm_Pf_MeanConv}
   Let $E\subset \RR^{n+1}$ be a closed, viscosity mean convex cone with connected interior. Suppose $C = \partial [E]$ is NOT minimizing in $E$ (This is the case when $E$ is not a Caccioppoli set). 
   
   Let $\cD:=\bigcup_{t\geq 0} D_t\times \{t\} \subset \RR^{n+1}\times \RR$ be the level set flow of $D_0:= \RR^{n+1}\setminus \Int(E)$.  Then $D_t = \eta_{\sqrt{t}}(D_1)$, $\forall t>0$; $D_1$ is a smooth domain and $\partial D_1$ is a self-expander supported in $\Int(E)$ and minimizing $\mbfE$-functional in $\Int(E)$. 
   
   Moreover, $\partial D_1$ is a radial graph over $\cE$, i.e. there exists $v\in C^\infty(\cE;(0, +\infty))$ which diverges near $\partial \cE$ such that\[
     D_1^c = \{r\omega: \omega\in \cE, r>v(\omega)\}.   \]
   In particular, the mean curvature of $\partial D_1$ with respect to outward pointed normal field is positive along $\partial D_1$.
  \end{Thm}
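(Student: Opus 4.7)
The plan is to approximate $\cE$ from inside by $C^{1,1}$-optimally regular mean convex domains $\cE_j$, produce $\mbfE$-minimizing self-expanders $S'_j \subset \Int(E_j)$ (where $E_j := C(\cE_j)$) via Lemma \ref{Lem_Exist Self-Expander w Obstacle}, control the $j\to\infty$ limit through an avoidance principle against the level set flow $\cD$, identify the limiting self-expander with $\partial D_1$, and obtain smoothness via the positive-Jacobi-field analysis of Corollary \ref{Cor_pos super Jac field div near sing}. The self-similarity $D_t = \eta_{\sqrt{t}}(D_1)$ is immediate: because $D_0 = \RR^{n+1}\setminus\Int(E)$ is a cone, for every $\lambda > 0$ the rescaled family $\{\eta_\lambda(D_{\lambda^2 t})\}_{t\geq 0}$ is again a weak set flow starting from $D_0$, so uniqueness/maximality (Proposition \ref{Prop_White_Exist LSF}) forces $\eta_\lambda D_{\lambda^2 t} = D_t$. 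For the construction, Corollary \ref{Cor_C^1,1 optimal reg mean convex approx} gives $\cE_j \subset \Int(\cE)$ with $\cE_j \to \cE$ and $\partial \cE_j$ having nonzero mean curvature away from a low-dimensional singular set; a Frankel-type argument as in the proof of Theorem \ref{Thm_Pf_MinSmooth} then shows $\partial[E_j]$ is not perimeter-minimizing in $E_j$, so Lemma \ref{Lem_Exist Self-Expander w Obstacle} produces optimally regular $\mbfE$-minimizing $S'_j = \partial F_j$ with $F_j \subset \Int(E_j)$ distant asymptotic to $\partial E_j$ at infinity.

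The dynamical input is avoidance. By Lemma \ref{Lem_Trancated self-expander is a weak flow}, on any bounded window the family $\{\eta_{\sqrt{t}}(S'_j)\}_{t>0}$ is a weak set flow. Comparing with $\cD$ via Theorem \ref{Thm_White_Avoid}: for small $\epsilon>0$ the initial slice $\eta_{\sqrt\epsilon}(S'_j) \subset \Int(E_j) \subset \Int(E)$ is disjoint from $D_0 = E^c$, and the asymptotics $S'_j \sim \partial E_j \subset \Int(E)$ near infinity is disjoint from $D_0$ there too, so avoidance forces $\eta_{\sqrt{t}}(S'_j) \cap \Int(D_t) = \emptyset$ for every $t \geq \epsilon$; sending $\epsilon \searrow 0$ and using the self-similarity of $\cD$ yields $\eta_\lambda(S'_j) \cap \Int(D_1) = \emptyset$ for every $\lambda \geq 1$. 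Since $\mbfE$-minimization is area minimization under the conformal metric $\hat g := e^{|x|^2/2n}g_{Euc}$, Lemma \ref{Lem_Cptness of perimeter minzer in E_j} yields a subsequential $\mbfF$-limit $F_\infty \subset E$ with $S' := \partial F_\infty$ optimally regular, $\mbfE$-minimizing in $E$, distant asymptotic to $\partial E$, and still satisfying $\eta_\lambda(S') \cap \Int(D_1) = \emptyset$ for $\lambda \geq 1$. To identify $S'$ with $\partial D_1$: the family $\{\eta_{\sqrt{t}}(S')\}_{t > 0}$ itself defines a weak set flow whose asymptotics at infinity match $\cD$, and combining the one-sided barrier property with the maximality of $\cD$ (Proposition \ref{Prop_White_Exist LSF}) forces $\partial D_1 = S'$ and $D_1^c = F_\infty$.

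For regularity, set $\phi := X\cdot \nu_{S'} = 2 H_{S'}$ on $\Reg(S')$ with $\nu_{S'}$ the outward normal. The barrier $\eta_\lambda(S') \cap \Int(D_1) = \emptyset$ for $\lambda \geq 1$ together with $S' = \partial D_1$ means every rescaling of $S'$ sits on the same side of $S'$; differentiating at $\lambda = 1$ gives $\phi \geq 0$. If $\phi \equiv 0$ then $S'$ would be a cone and hence coincide with $\partial E$, contradicting that $\partial[E]$ is not $\mbfE$-minimizing while $S'$ is; thus by the strong maximum principle for the $\mbfE$-Jacobi operator $\cL_{S'}$ we conclude $\phi > 0$ on $\Reg(S')$. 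Passing to the conformal metric $\hat g$ via identity (\ref{Equ_Jac oper of mbfE vs Jac oper under hat(g)}), $S'$ becomes a two-sided stable minimal hypersurface under $\hat g$ and $e^{|X|^2/(4n)}\phi$ is a locally bounded positive super-solution of the appropriate Jacobi-type inequality, so Corollary \ref{Cor_pos super Jac field div near sing} forces $\Sing(S') = \emptyset$. Finally, $\phi > 0$ everywhere, together with the first-touching-scale argument of Lemma \ref{Lem_Regular Minzer w Mean Convex Cone Bdy} applied to $\{\eta_\lambda S'\}$, promotes these rescalings to a strict foliation of $\Int(E)$, so $S'$ is a radial graph over $\cE$ described by some $v \in C^\infty(\cE;(0,\infty))$ diverging near $\partial \cE$, with strictly positive mean curvature $H_{S'} = \phi/2 > 0$.

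The main obstacle I expect is the identification $S' = \partial D_1$ in the second paragraph: carrying out avoidance cleanly for the non-compact weak flows $\{\eta_{\sqrt t}(S'_j)\}$ against $\cD$, passing from $\eta_{\sqrt t}(S'_j)\cap \Int(D_t) = \emptyset$ to the sharp rescaled statement at $t = 1$, and then invoking maximality of $\cD$ together with matching asymptotics at infinity requires delicate bookkeeping, because both flows are genuinely unbounded and only interior (not boundary) disjointness propagates through avoidance.
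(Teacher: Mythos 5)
Your proposal follows the paper's argument step for step: scale-invariance of $D_0$ for the self-similarity, approximation by $C^{1,1}$ optimally regular $\cE_j$, self-expanders $F^j\subset \Int(E_j)$ via Lemma \ref{Lem_Exist Self-Expander w Obstacle}, avoidance against $\cD$ to get one-sided containment, $\mbfF$-limit $F^\infty$, identification with $D_1$ via maximality of the level set flow, and then the positive Jacobi field $\phi = X\cdot\nu$ plus Corollary \ref{Cor_pos super Jac field div near sing} for regularity. The one place you leave a real gap is exactly the one you flag: making the avoidance argument (Theorem \ref{Thm_White_Avoid}) rigorous for the noncompact flows.

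To close it, the paper proceeds quantitatively. Since $D_\lambda = \eta_{\sqrt\lambda}(D_1)\to D_0$ locally in the Hausdorff sense as $\lambda\searrow 0$, and $D_0\subset \Int(D_0^j)\cup\{\mathbf{0}\}$, one finds $\epsilon_j>0$ and $\lambda_j\in(0,1)$ such that for all $0<\lambda\leq\lambda_j$ and all $\mu\in(0,1]$,
\begin{align*}
 \mathrm{dist}\bigl(D_{\lambda\mu}\cap \cA,\ \eta_{\sqrt\lambda}(F^j)\cap \cA\bigr) \geq \epsilon_j, \qquad \cA := \AAa(\mathbf{0};1/2,1).
\end{align*}
Rescaling back, this says that $F^j$ and $D_\mu$ are uniformly separated \emph{outside} $\BB_{1/\sqrt{\lambda_j}}$, simultaneously for every $\mu\in(0,1]$. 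That is precisely what lets you replace the noncompact flow by the truncated weak flow $t\mapsto \eta_{\sqrt{t+\epsilon}}\bigl(\spt(\partial[F^j])\cap\Clos(\BB_{2/\sqrt{\lambda_j}})\bigr)$ (Lemma \ref{Lem_Trancated self-expander is a weak flow}): its heat boundary consists of the time-$0$ slice, which sits in $\Int(E_j)$ at positive distance from $D_0$, and the lateral boundary supported outside $\BB_{1/\sqrt{\lambda_j}}$, which the estimate above keeps uniformly away from $\cD$. Then Theorem \ref{Thm_White_Avoid} applies on each compact time interval, and letting $\epsilon\searrow 0$ yields (\ref{Equ_lambda_>1 S^j disjt from D_1}). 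Two small points of precision you should also make: the disjointness you want to propagate is at the level of the \emph{sets} $F^j$ (both $\eta_\lambda(F^j)\cap\Int(D_1)=\emptyset$ and $\eta_\lambda(\Int(F^j))\cap D_1 = \emptyset$), not just the hypersurfaces $S'_j$; and the maximality step needs the statement that $t\mapsto \RR^{n+1}\setminus \eta_{\sqrt{t}}(\Int(F^\infty))$ is a weak set flow from $D_0$, which combined with $F^\infty\cap\Int(D_1)=\emptyset$ at $\lambda=1$ forces $D_1 = (\Int F^\infty)^c$ and $\partial D_1 = \partial F^\infty$. Finally, for the ``not a cone'' step the paper actually runs a second blow-up of the $F^j$ at the origin against Lemma \ref{Lem_Regular Minzer w Mean Convex Cone Bdy}; your shorter route via ``$\mbfE$-minimizing cone $\Rightarrow$ perimeter-minimizing by blowdown'' is the same reasoning the paper already uses inside Lemma \ref{Lem_Exist Self-Expander w Obstacle} and is fine.
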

  
  \begin{proof} 
   That $D_t = \eta_{\sqrt{t}}(D_1)$ follows from scaling invariance of $D_0$ and the uniqueness of level set flow, see Proposition \ref{Prop_White_Exist LSF};   
   
   Now let $\cE_j \subset \Int(\cE)$ be the $C^{1,1}$-optimally regular closed connected mean convex approximation given by Corollary \ref{Cor_C^1,1 optimal reg mean convex approx}; Thus $E_j:= C(\cE_j)$ satisfies the assumption in Lemma \ref{Lem_Exist Self-Expander w Obstacle}. 
   Let $D_0^j:= \RR^{n+1}\setminus \Int(E_j)$.  We know that $\Int(D_0^j) \supset D_0 \setminus \{\mathbf{0}\}$ and $\partial D_0^j\to \partial D_0$ locally in Hausdorff distant sense as $j\to \infty$.  
         
   Fix $j\geq 1$, by Lemma \ref{Lem_Exist Self-Expander w Obstacle}, there exists a closed Caccioppoli set $F^j \subset (D^j_0)^c = \Int(E_j)$ minimizing $\mbfE$-functional in $E_j$ and distant asymptotic to $E_j$ near infinity. In particular, $\partial F^j$ is a self-expander in $\RR^{n+1}$ with optimal regularity.  We now Claim that
   \begin{align}
    \eta_\lambda(F^j)\cap \Int(D_1) = \emptyset;\ \ \eta_\lambda(\Int(F^j))\cap D_1 = \emptyset, \ \ \ \forall \lambda\geq 1.  \label{Equ_lambda_>1 S^j disjt from D_1}
   \end{align}
   To see this, recall that by avoidance principle of level set flow, we have, locally in the Hausdorff distant sense, \[
     \lim_{\lambda\searrow 0} D_\lambda = \lim_{\lambda\searrow 0} \eta_\lambda(D_1)  \subset D_0 \subset \Int(D_0^j)\cup \{\mathbf{0}\}.   \]
   Hence there exists $\epsilon_j>0$ and $\lambda_j\in (0, 1)$ such that for every $0<\lambda\leq \lambda_j$ and every $\mu\in (0, 1]$,  
   \begin{align}
     dist(D_{\lambda\mu}\cap \cA, \eta_{\sqrt{\lambda}}(F^j)\cap\cA) \geq \frac{1}{2}dist(D_0\cap \cA, (D_0^j)^c \cap \cA) \geq \epsilon_j,  \label{Equ_dist(D_lambda, sqrt(lambda)F^j)>0}   
   \end{align}
   where $\cA:= \AAa(\mathbf{0}; 1/2, 1)$. In particular, (\ref{Equ_dist(D_lambda, sqrt(lambda)F^j)>0}) implies that for every $\mu\in (0, 1]$, \[
     F^j\cap D_\mu \setminus \BB_{1/\sqrt{\lambda_j}} = \emptyset.   \]
   Hence (\ref{Equ_lambda_>1 S^j disjt from D_1}) follows from Theorem \ref{Thm_White_Avoid} and Lemma \ref{Lem_Trancated self-expander is a weak flow} by comparing $\cD$ with the weak set flow \[
     t\mapsto \eta_{\sqrt{t+\epsilon}}( \spt(\partial [F^j])\cap \Clos(\BB_{2/\sqrt{\lambda_j}})),   \] 
   for every $\epsilon>0$ and then send $\epsilon\searrow 0$.
      
   Now let $j\to \infty$ in (\ref{Equ_lambda_>1 S^j disjt from D_1}), by $\mbfE$-minimizing property of $F^j$ and Corollary \ref{Cor_Two-side barrier for self-expander} and Lemma \ref{Lem_Cptness of perimeter minzer in E_j}, we have $F^j \to F^\infty$ in $\bfF$-metric, where $F^\infty = \spt[F^\infty]\subset E$ is a closed Caccioppoli set distant asymptotic to $E$ near infinity, and has $S^\infty := \partial F^\infty$ a $\mbfE$-stable hypersurface with optimal regularity, minimizing $\mbfE$-functional in $\Int(E)$.  By (\ref{Equ_lambda_>1 S^j disjt from D_1}), we have
   \begin{align}
    \eta_\lambda (F^\infty)\cap \Int(D_1) = \emptyset,\ \eta_\lambda(\Int(F^\infty))\cap D_1 = \emptyset, \ \ \ \forall \lambda\geq 1.   \label{Equ_F^infty disjoint D_1}
   \end{align}
   
   Also, we assert that $F^\infty$ is not a cone. In fact otherwise, $E = F^\infty$ whose boundary is a stable minimal hypercone. In particular, $\mathbf{0}\in F^\infty$.  Hence when $j\to \infty$, $d_j:= dist_{\RR^{n+1}}(\mathbf{0}, F^j) \to 0$, and by Lemma \ref{Lem_Cptness of perimeter minzer in E_j}, $\eta_{1/j}(F^j)$ $\mbfF$-subconverges to some Caccioppoli set $\hat{F}^\infty = \spt[\hat{F}^\infty] \subset E$ with distant $1$ to the origin and an optimally regular stable minimal boundary.  By maximum principle \cite{SolomonWhite89_Maxim, Ilmanen96}, $\hat{F}^\infty \subset \Int(E)$.  But since $E$ is not minimizing, by comparing rescalings of $\hat{F}^\infty$ with the Caccioppoli set constructed in Lemma \ref{Lem_Regular Minzer w Mean Convex Cone Bdy} and using maximum principle \cite{SolomonWhite89_Maxim, Ilmanen96}, we get a contradiction.
   
   On the other hand, by Lemma \ref{Lem_Trancated self-expander is a weak flow}, $t\mapsto \RR^{n+1}\setminus \eta_{\sqrt{t}}(\Int(F^\infty))$ is a weak-set flow from $D_0$. Hence by Proposition \ref{Prop_White_Exist LSF},  we have $D_1\supset \RR^{n+1}\setminus \Int(F^\infty)$. Combined with (\ref{Equ_F^infty disjoint D_1}) when $\lambda = 1$, we have $\partial D_1 = \partial F^\infty$ and $\Int(D_1)= \RR^{n+1}\setminus F^\infty$.
   
   Finally by (\ref{Equ_F^infty disjoint D_1}), we have $\eta_\lambda (F^\infty)\subset F^\infty$ for $\lambda\geq 1$.  Hence $\phi := 2H^{S^\infty} = X\cdot \nu_{S^\infty} \geq 0$ on regular part of $S^\infty$ and is not identically $0$ since $S^\infty$ is not a cone. 
   Recall that by self-expanding equation \cite[(3.9)]{Ding20_MinCone_SelfExpander}, we have $(\cL_{S^\infty} + 1) \phi = 0$, where $\cL_{S^\infty}$ is the Jacobi operator for $\mbfE$-functional defined in (\ref{Equ_Jac oper wrt mbfE-functional}). By (\ref{Equ_Jac oper of mbfE vs Jac oper under hat(g)}), this is also a Jacobi-type equation.  Hence by strong maximum principle, $\phi>0$ and is bounded on regular part of $S^\infty$ in each ball. And therefore by Corollary \ref{Cor_pos super Jac field div near sing}, $\Sing(S^\infty) = \emptyset$ and $\spt(S^\infty)$ is a radial graph over $\Int(\cE)$. 
  \end{proof}
  
  \begin{proof}[Proof of Theorem \ref{Thm_Intro_LawsonConj}]
   Let $E\subset \RR^{n+1}$ be a closed cone bounded by $C$.  Recall by Remark \ref{Rem_Viscosity mean convex} (1), $E$ is viscosity mean convex.  
   If $E$ is not perimeter minimizing in $E$, then Theorem \ref{Thm_Pf_MeanConv} provides entirely smooth, strictly mean convex self-expander $ S\subset \Int(E)$ and distant asymptotic to $E$ near infinity. Thus the blow-down of $S$ restricted to $\BB_1$ will be strictly mean convex hypersurface approximating $C\cap \BB_1$ in the Hausdorff distant sense.
   
   If $E$ minimizes perimeter in $E$, Theorem \ref{Thm_Pf_MinSmooth} provides a smooth minimizing hypersurface $S\subset \Int(E)$ with $C$ to be the tangent cone at infinity.  Blow-down $\eta_\lambda (S)$ ($\lambda\searrow 0$) also converges to $C$ locally in the Hausdorff distant sense, and restricting to $\BB_1$, $\eta_\lambda(S)$ are strictly stable.  Let $\phi_1>0$ be the first eigenfunction of Jacobi operator $L_{\eta_\lambda(S)}$ on the domain $\eta_\lambda(S)\cap \BB_1$, then for sufficiently small $\varepsilon$, \[
     \{x - \varepsilon\phi_1(x)\cdot\nu_{\eta_\lambda(S)}: x\in \eta_\lambda(S)\cap \BB_1\},   \]
   is a properly embedded smooth hypersurface in $\BB_1$ with positive mean curvature, and still approximate $C\cap \BB_1$ in Hausdorff distant sense. 
  \end{proof}

   

\appendix
  \section{Growth Lemma for Positive Jacobi Fields} \label{Sec_Growth Lem}
  The goal of this section is to prove Lemma \ref{Lem_Growth Jac Field}.
  We begin with the following uniform control of first eigenfunction for Jacobi operator of cross section. For every $C\in \scC_n^{=1}(\Lambda)$, let $\Gamma=\Gamma(C):= C\cap \SSp^n$ be the cross section of $C$, and for each domain $\Omega\subset \Gamma$, let \[
   \mu_1(\Omega):= \inf\{\int_\Gamma |\nabla_\Gamma \phi|^2 - |A_\Gamma|^2\phi^2 : \phi\in C_c^1(\Omega), \|\phi\|_{L^2(\Gamma)} = 1\}.   \] 
  Denote for simplicity $\mu_1(C):= \mu_1(\Gamma(C))$.  Recall by \cite{Lawson69_Rigidity_QuadraticMH, Perdomo02_1stEigenV_ClifTorus, ZhuJ18_1stEigenV_JacOper_Sphere}, $\mu_1(C) \leq -(n-1)$, and equality holds if and only if $C$ is a quadratic cone.
  
  For each $x\in \Gamma(C)$, following \cite{CheegerNaber13_QuantStratif}, let \[
   r_\Gamma(x):= \sup\{r\in (0, 1): |A_\Gamma|\leq r^{-1} \text{ on }\BB_r(x)\cap \Gamma  \}   \]
  be the regularity scale. Clearly, $r_\Gamma$ is continuous in $x$ and in smooth convergence of $\Gamma$. For each $\delta>0$, denote $\Omega_\delta(C):= \{x\in \Gamma: r_\Gamma(x)>\delta\}$. 
  
  \begin{Lem} \label{Lem_Lower Bound on Reg Scale}
   There exists $\rho_0(n, \Lambda) > 0$ such that $\mu_1(\Omega_{\rho_0}(C)) \leq -(n-1)$ for every $C\in \scC_n^{=1}(\Lambda)$.
  \end{Lem}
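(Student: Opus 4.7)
My plan is proof by contradiction combined with the Schoen--Simon compactness theorem applied to the cross sections $\Gamma_j := C_j\cap \SSp^n$. Suppose the lemma fails: there exist $C_j\in\scC_n^{=1}(\Lambda)$ and $\rho_j\searrow 0$ with $\mu_1(\Omega_{\rho_j}(C_j))>-(n-1)$. The mass bound on $C_j$ translates to a uniform area bound on $\Gamma_j$, so by \cite{SchoenSimon81} I may pass to a subsequence along which $|\Gamma_j|\to m|\Gamma_\infty|$ as integral varifolds in $\SSp^n$, where $\Gamma_\infty$ is a stable minimal hypersurface in $\SSp^n$ with optimal regularity and $m\in\NN$. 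Since by \cite{Lawson69_Rigidity_QuadraticMH, Perdomo02_1stEigenV_ClifTorus, ZhuJ18_1stEigenV_JacOper_Sphere} one has $\mu_1(C_j)\leq -(n-1)$, with equality only when $\Gamma_j$ is a smooth Clifford (quadratic) cross-section, I split into two cases according to the behavior of $\mu_1(C_j)$ along the subsequence.

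In the \emph{gap} case $\limsup_j\mu_1(C_j)\leq -(n-1)-2\delta$ for some $\delta>0$, I would take first eigenfunctions $\phi_j>0$ on $\Gamma_j$ with $\|\phi_j\|_{L^2}=1$ and build cutoffs $\chi_j\in C_c^1(\Omega_{\rho_j}(C_j))$ vanishing on $\{r_{\Gamma_j}\leq \rho_j\}$. Because $\phi_j$ is a first eigenfunction, integration by parts yields the clean identity $\int|\nabla(\chi_j\phi_j)|^2-|A_{\Gamma_j}|^2(\chi_j\phi_j)^2=\mu_1(C_j)\int\chi_j^2\phi_j^2+\int\phi_j^2|\nabla\chi_j|^2$, so the task reduces to arranging that $\int\phi_j^2|\nabla\chi_j|^2\to 0$ while $\int\chi_j^2\phi_j^2\to 1$. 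Since $\{r_{\Gamma_j}<\rho_j\}$ lies (modulo the smooth graphical pieces) in a shrinking neighborhood of the codimension-$\geq 7$ singular set of $\Gamma_\infty$, the standard logarithmic-cutoff capacity construction provides such $\chi_j$ uniformly in $j$, and the resulting bound $\mu_1(\Omega_{\rho_j}(C_j))\leq -(n-1)-\delta$ contradicts the hypothesis.

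In the \emph{equality} case $\mu_1(C_j)\to -(n-1)$, using the first eigenfunction of $\Gamma_\infty$ as a test function on $\Gamma_j$ via the smooth convergence on the regular part forces $\mu_1(\Gamma_\infty)=-(n-1)$, so by Lawson rigidity $\Gamma_\infty$ is a smooth Clifford cross-section. The higher-multiplicity subcase $m\geq 2$ can then be excluded by the argument proving Lemma \ref{Lem_Multi 1 cone away from multi>1 vfds}, since it would produce a positive homogeneous-degree-one Jacobi field on $C_\infty$ inconsistent with the $C_j$ being non-hyperplane stable cones. Therefore $m=1$ and $\Gamma_j\to\Gamma_\infty$ smoothly, so $r_{\Gamma_j}$ admits a positive uniform lower bound for $j$ large; consequently $\Omega_{\rho_j}(C_j)=\Gamma_j$ eventually, giving $\mu_1(\Omega_{\rho_j}(C_j))=\mu_1(C_j)\leq -(n-1)$, again contradicting the hypothesis.

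I expect the main obstacle to be the uniform cutoff construction in the gap case: across a varifold-converging sequence of (possibly singular) stable minimal hypersurfaces, one needs cutoffs vanishing on $\{r_{\Gamma_j}\leq \rho_j\}$ whose $L^2$-gradient energy is small uniformly in $j$, even as the singular or pinching loci move around. This should follow from Schoen--Simon $\varepsilon$-regularity, which forces $\{r_{\Gamma_j}\leq \rho_j\}$ into arbitrarily small tubular neighborhoods of a codimension-$\geq 7$ limiting subset, combined with the classical logarithmic-cutoff capacity estimate for such subsets.
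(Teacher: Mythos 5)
Your overall architecture (contradiction, cross-section compactness, dichotomy, Lawson--Zhu rigidity in the equality case, Allard regularity to finish) is the same as the paper's, and your equality case is essentially the paper's second case. But your \emph{gap} case diverges from the paper, and this is where there is a genuine gap. You dichotomize on $\limsup_j \mu_1(C_j)$ rather than on $\mu_1(C_\infty)$, and in the gap case you take ``first eigenfunctions $\phi_j$ on $\Gamma_j$'' and try to cut them off near $\{r_{\Gamma_j}\leq\rho_j\}$. Two things are not justified. First, the existence and regularity of a global first eigenfunction $\phi_j$ on the possibly singular cross section $\Gamma_j$ is not obvious: $|A_{\Gamma_j}|^2$ blows up at rate $r_{\Gamma_j}^{-2}$ near the codimension-$\geq 7$ singular set and is not a priori in a Lebesgue class making the quadratic form bounded below on $W^{1,2}$ or the infimum attained. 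Second, and more seriously, the logarithmic-cutoff capacity estimate only gives $\int|\nabla\chi_j|^2\to 0$; to conclude $\int\phi_j^2|\nabla\chi_j|^2\to 0$ and $\int\chi_j^2\phi_j^2\to 1$ you need quantitative control of $\phi_j$ near the singular set (e.g.\ a uniform $L^\infty$ or high-$L^p$ bound, or non-concentration), and Schoen--Simon $\varepsilon$-regularity alone does not supply that: $\phi_j$ can concentrate or diverge exactly where $|A_{\Gamma_j}|^2$ is large. You flag this as the ``main obstacle'' but do not resolve it, so as written this case is open.

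The paper avoids the whole issue by dichotomizing on the limit. If $\mu_1(C_\infty)<-(n-1)$, choose a single \emph{compactly supported} test function $\psi\in C_c^1(\Gamma_\infty)$ with Rayleigh quotient strictly below $-(n-1)$ (this exists by the infimum definition of $\mu_1$). Since $\Gamma_j\to\Gamma_\infty$ smoothly on a neighborhood of $\spt\psi$ (the regular part, multiplicity one), and since $r_{\Gamma_j}$ is uniformly bounded below near $\spt\psi$, for $j\gg 1$ the transported function $\psi_j$ lies in $C_c^1(\Omega_{1/j}(C_j))$ and its Rayleigh quotient converges to that of $\psi$, directly contradicting $\mu_1(\Omega_{1/j}(C_j))>-(n-1)$. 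This avoids any eigenfunction existence issue and any capacity/cutoff estimate; no information about $\phi_j$ near the singular set is needed. If instead $\mu_1(C_\infty)=-(n-1)$, then by Zhu's rigidity $C_\infty$ is a quadratic (Clifford) cone with smooth cross section, Allard regularity gives $\Omega_{1/j}(C_j)=\Gamma_j$ for $j\gg1$, and $\mu_1(\Gamma_j)\leq -(n-1)$, a contradiction --- exactly your equality case. (The multiplicity-one issue you handle via Lemma \ref{Lem_Multi 1 cone away from multi>1 vfds} is implicit in the paper's use of $C_\infty\in\scC_n^{=1}(\Lambda)$; that part of your argument is fine.) To repair your proof, replace the gap case by the paper's test-function transport argument on the limit cone.
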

  \begin{proof}
   Suppose otherwise, there are $C_j\in \scC_n^{=1}(\Lambda)$ such that $\mu_1(\Omega_{1/j}(C_j)) > -(n-1)$.  
  
   Suppose $C_j \to C_\infty \in \scC_n^{=1}(\Lambda)$ and write $\Gamma_j := C_j\cap \SSp^n$, $1\leq j\leq \infty$.  Then we must have $\mu_1(C_\infty) = -(n-1)$. Since otherwise by \cite{ZhuJ18_1stEigenV_JacOper_Sphere}, $\mu_1(C_\infty)< -(n-1)$ and thus there exists $\psi \in C_c^1(\Gamma(C_\infty))$ such that \[
    \int_{\Gamma_\infty} |\nabla_{\Gamma_\infty} \psi|^2 - |A_{\Gamma_\infty}|^2\psi^2 < -(n-1)\int_{\Gamma_\infty} \psi^2.   \]
   Since the convergence of $\Gamma_j$ is smooth near $\spt(\psi)$ and $r_{\Gamma_j}$ is continuous in $j$ and $x$ near $\spt(\psi)$, this is a contradiction to $\mu_1(\Omega_{1/j}(C_j)) > -(n-1)$.
  
   Now that $\mu_1(C_\infty) = -(n-1)$. By \cite{ZhuJ18_1stEigenV_JacOper_Sphere}, $C_\infty$ is a quadratic cone, which has smooth cross section; Then by Allard Regularity, for $j>>1$, $\mu_1(\Omega_{1/j}(C_j)) = \mu_1(C_j) \leq -(n-1)$, that's a contradiction.  
  \end{proof}

  \begin{Lem} \label{Lem_Growth Jac field, Cone}
   For every $\Lambda>1$, there exists $C(n, \Lambda)>1$ with the following property.  Let $\delta_2(n, \Lambda)>0$ be in Lemma \ref{Lem_Multi 1 cone away from multi>1 vfds} and $\eta(n,\Lambda) = \eta(\delta_2/2, n, \Lambda)\in (0, 1)$ be in Corollary \ref{Cor_Harnack Ineq};  Also let $\gamma_n$ be in Proposition \ref{Prop_Growth Rate Lower Bd}.
   
   If $C\in \scC_n^{=1}(\Lambda)$ is a stable minimal hypercone; $0<u \in C_{loc}^\infty(C\cap \BB_{4\eta^{-1}})$ is a Jacobi field, i.e. $\Delta_C u + |A_C|^2 u = 0$. Then, \[
    \|u\|_{L^1_*(\BB_r; \|C\|)} \geq C(n, \Lambda)^{-1}r^{\gamma_n}\cdot\|u\|_{L^1_*(\BB_1; \|C\|)}, \ \ \ \text{ for every }r\in (0, 1).   \]
  \end{Lem}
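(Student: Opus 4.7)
The plan is to reduce the question to a scalar second-order Euler ODE by separating variables against the first Dirichlet eigenfunction of the Jacobi operator on the cross-section $\Gamma:=C\cap\SSp^n$. Let $J_\Gamma := \Delta_\Gamma + |A_\Gamma|^2$. By Lemma \ref{Lem_Lower Bound on Reg Scale}, take $\phi_1>0$ to be the first Dirichlet eigenfunction of $-J_\Gamma$ on a fixed smooth subdomain $\Omega\subset\subset\Omega_{\rho_0}(C)$ realizing the eigenvalue bound $\mu_1\leq -(n-1)$. The Simons stability inequality for the cone $C$ — the classical Hardy-type test-function computation on $C$ — forces the matching bound $\mu_1\geq -((n-2)/2)^2$, so the indicial exponents $\gamma^\pm := -(n-2)/2\pm\sqrt{((n-2)/2)^2+\mu_1}$ are real and satisfy $\gamma^-\leq\gamma^+\leq\gamma_n<0$. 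Setting $f(r):=\int_\Omega u(r\omega)\phi_1(\omega)\,d\omega$ and using the polar decomposition $\Delta_C = \partial_r^2 + \tfrac{n-1}{r}\partial_r + r^{-2}\Delta_\Gamma$, $|A_C|^2 = r^{-2}|A_\Gamma|^2$, integration by parts in $\omega$ against the eigenvalue equation for $\phi_1$ yields
\[ r^2 f''(r) + (n-1)r f'(r) - \mu_1 f(r) \;=\; \int_{\partial\Omega} u(r\cdot)\,\partial_\nu\phi_1\,d\sigma \;\leq\; 0, \qquad r\in(0,4\eta^{-1}), \]
so $f$ is a positive subsolution of the corresponding Euler equation (the boundary term is non-positive because $u>0$ and $\partial_\nu\phi_1\leq 0$ on $\partial\Omega$ by the Hopf boundary lemma).

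The positive solutions of the homogeneous Euler equation have the form $A r^{\gamma^+}+B r^{\gamma^-}$, and a standard ODE comparison (or equivalently expanding $f$ against these fundamental solutions and tracking both endpoints) gives the following. Dominance of $r^{\gamma^-}$ as $r\to 0^+$ together with $f>0$ forces the $r^{\gamma^-}$-coefficient to be non-negative; combining this with the positivity of $f$ at $r=4\eta^{-1}$ in the case when the $r^{\gamma^+}$-coefficient is negative, a short case analysis using $r^{\gamma^--\gamma^+}\geq 1$ for $r\leq 1$ yields in all cases
\[ f(r) \;\geq\; f(1)\cdot r^{\gamma^+} \;\geq\; f(1)\cdot r^{\gamma_n}, \qquad r\in(0,1]. \]

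It remains to translate this back to the inequality for $L^1_*$-averages. The lower bound $\|u\|_{L^1_*(\BB_r;\|C\|)}\geq c_1 f(1) r^{\gamma_n}$ follows by integrating $F(s):=\int_\Gamma u(s\omega)\,d\omega\geq \|\phi_1\|_\infty^{-1}f(s)\geq \|\phi_1\|_\infty^{-1}f(1)s^{\gamma_n}$ against $s^{n-1}\,ds$ over $(0,r)$; the integral converges at $0$ since a direct computation in the admissible range $\mu_1\in[-((n-2)/2)^2,-(n-1)]$ shows $n+\gamma^->0$. The matching bound $f(1)\geq c_2\|u\|_{L^1_*(\BB_1;\|C\|)}$ — the only step not of pure ODE nature and the main technical hurdle — would be obtained by applying the Harnack inequality of Corollary \ref{Cor_Harnack Ineq} (whose multiplicity-one hypothesis for cones in $\scC_n^{=1}(\Lambda)$ is automatic from Lemma \ref{Lem_Multi 1 cone away from multi>1 vfds}), chained over a bounded-complexity covering of $C\cap \AAa(\mathbf{0};1/2,1)\cap\{r_\Gamma\geq \rho_0\}$ by balls of radius comparable to $\eta$, and combining with the uniform positive lower bound of $\phi_1$ on the fixed subdomain $\Omega$ to relate the weighted cross-sectional integral $f(1)$ to the full ball average. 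The most delicate point is controlling the small-$r$ and singular-ray contributions to $\|u\|_{L^1_*(\BB_1;\|C\|)}$; both are absorbed by iterating the Harnack chaining at dyadic scales and reinvoking the growth bound $f(s)\geq f(1)s^{\gamma_n}$ established in the second paragraph.
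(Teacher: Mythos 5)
Your overall architecture (separate variables against the cross-sectional first Dirichlet eigenfunction, reduce to an Euler-type ODE inequality, then close with the Harnack inequality from Corollary \ref{Cor_Harnack Ineq}) is the same as the paper's, and the step you call ``the main technical hurdle'' — relating $f(1)$ to $\|u\|_{L^1_*(\BB_1;\|C\|)}$ — is in fact handled by one application of Corollary \ref{Cor_Harnack Ineq} at scale comparable to $1$ (after rescaling the domain $\BB_{4\eta^{-1}}$ to $\BB_4$), together with the uniform elliptic estimate $\sup_\Omega\varphi_\Omega\leq C(n,\Lambda)\|\varphi_\Omega\|_{L^1}$; no chaining over dyadic scales or separate treatment of singular-ray contributions is needed. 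Your ODE step via the indicial exponents $\gamma^\pm$ is a legitimate alternative to the paper's concavity substitution $W(s)=U(s^{-1/\beta_n})s^{\gamma_n/\beta_n}$, provided you patch the degenerate case $\gamma^+=\gamma^-$ (logarithmic fundamental solutions) and make the comparison rigorous; the cleanest way is to set $h:=f/r^{\gamma^+}$, observe $(r^\alpha h')'\leq 0$ with $\alpha=\gamma^+-\gamma^-+1\geq 1$, and use $h>0$ to conclude $h'\leq 0$ and hence $h(r)\geq h(1)$ on $(0,1]$.

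There is, however, a genuine error in your choice of domain: you take $\Omega\subset\subset\Omega_{\rho_0}(C)$, but Dirichlet eigenvalues are monotone non-increasing in the domain, so for such an $\Omega$ one only has $\mu_1(\Omega)\geq\mu_1(\Omega_{\rho_0}(C))$, and it may well happen that $\mu_1(\Omega)>-(n-1)$. In that case $\gamma^+>\gamma_n$, so $r^{\gamma^+}<r^{\gamma_n}$ for $r<1$ and your conclusion $f(r)\geq f(1)r^{\gamma_n}$ fails. Lemma \ref{Lem_Lower Bound on Reg Scale} bounds $\mu_1(\Omega_{\rho_0}(C))\leq-(n-1)$ and you need a smooth domain \emph{containing} $\Omega_{\rho_0}(C)$ (but still compactly contained in the region where $|A_\Gamma|$ is controlled, so that elliptic estimates for $\varphi_\Omega$ are uniform), as in the paper's choice $\Omega_{\rho_0}(C)\subset\Omega\subset\Omega_{\rho_0/2}(C)$. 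Reversing the inclusion fixes the argument.
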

  \begin{proof}
   Let $\Gamma:= C\cap \SSp^n$ be the cross section of $C$ in Lemma \ref{Lem_Growth Jac field, Cone}; $\rho_0$ be in Lemma \ref{Lem_Lower Bound on Reg Scale}. Let $\Omega$ be a smooth domain in $\Gamma$ such that $\Omega_{\rho_0}(C) \subset \Omega \subset \Omega_{\rho_0/2}(C)$, and $0<\varphi_\Omega$ be a first Dirichlet eigenfunction of $-\Delta_\Gamma - |A_\Gamma|^2$, i.e. 
   \begin{align}
   \begin{cases}
    - (\Delta_\Gamma + |A_\Gamma|^2) \varphi_\Omega = \mu_1(\Omega)\varphi_\Omega, &\ \text{ on }\Omega; \\   
    \varphi_\Omega = 0, &\ \text{ on }\partial \Omega \text{ if }\partial \Omega \neq \emptyset. 
   \end{cases}  \label{Equ_varphi_Omega def}
   \end{align}
   with eigenvalue $\mu_1(\Omega) \leq \mu_1(\Omega_{\rho_0}(C)) \leq -(n-1)$ by Lemma \ref{Lem_Lower Bound on Reg Scale}.  By definition of $\Omega$, $|A_\Gamma|\leq 2\rho_0^{-1}$ on $\BB_{\rho_0/2}(\Omega)$. Hence by covering $\Omega$ with ball of radius $\rho_0$ and applying classical elliptic estimates \cite{GilbargTrudinger01} we see
   \begin{align}
    \sup_{\Omega} |\varphi_\Omega| \leq C(n, \Lambda)\|\varphi_\Omega\|_{L^1}   \label{Equ_sup of varphi controled by L^1}
   \end{align}
  
   Now let $U(r):= \int_\Omega \varphi_\Omega (\omega) u(r\omega)\ d\omega$.  Since \[
    0 = (\Delta_C + |A_C|^2 )u = \partial_r^2 u + \frac{n-1}{r}\partial_r u + \frac{1}{r^2}(\Delta_\Gamma + |A_\Gamma|^2)u.  \]
   We then have for $r\in (0, 1]$,
   \begin{align}
   \begin{split}
    U''(r) + \frac{n-1}{r}U'(r) & = -\frac{1}{r^2}\int_\Omega \varphi_\Omega(\omega)(\Delta_\Gamma + |A_\Gamma|^2)u(r\omega) \\
   & = -\frac{1}{r^2} \int_{\Omega} (\Delta_\Gamma + |A_\Gamma|^2)\varphi_\Omega(\omega)\cdot u(r\omega) + \frac{1}{r^2}\int_{\partial \Omega}\partial_\zeta \varphi_\Omega\cdot u \\
   & \leq \frac{\mu_1(\Omega)}{r^2}\int_\Omega \varphi_\Omega(\omega)u(r\omega) \\
   & \leq -\frac{n-1}{r^2}U(r),
   \end{split}  \label{Equ_U'' + (n-1)/r U' < -(n-1)/r^2 U}
   \end{align}
   where the first inequality follows from (\ref{Equ_varphi_Omega def}) as well as positivity of $u$ and the inward normal derivative $-\partial_\zeta \varphi$ once $\partial \Omega \neq \emptyset$; the second inequality follows from $\mu_1(\Omega)\leq -(n-1)$ by Lemma \ref{Lem_Lower Bound on Reg Scale}.  Let 
   \begin{align*}
    \gamma_n = -\frac{n-2}{2} + \sqrt{(\frac{n-2}{2})^2 - (n-1)}, &\ & \beta_n:= 2\sqrt{(\frac{n-2}{2})^2 - (n-1)}.
   \end{align*}
   And set $W(s):= U(s^{-1/\beta_n})\cdot s^{\gamma_n/\beta_n}$, $s\geq 1$, then (\ref{Equ_U'' + (n-1)/r U' < -(n-1)/r^2 U}) is equivalent to $W'' (s) \leq 0$ on $s\geq 1$. Since $u>0$ on $C$ and then $W>0$ on $[1,+\infty)$, we must have $W'\geq 0$ on $[1, +\infty)$.  Thus,
   \begin{align}
    U(r) \geq U(1)\cdot r^{\gamma_n}, \ \ \ \forall r\in (0, 1). \label{Equ_U(r)> U(1)r^gamma_n} 
   \end{align}
   We then have for every $r\in (0, 1)$,
   \begin{align*}
    \|u\|_{L^1(\BB_r; \|C\|)}\sup_\Omega \varphi_\Omega 
   & \geq \int_{\BB_r} \varphi_\Omega(\frac{x}{|x|})u(x)\ d\|C\|(x) \\
   & = \int_0^r s^{n-1} U(s)\ ds \geq \int_0^r s^{n-1+\gamma_n} U(1)\ ds  \\
   & = c_n r^{n+\gamma_n}\cdot \int_\Omega \varphi_\Omega(\omega)u(\omega)\ d\omega \\
   & \geq c_n r^{n+\gamma_n}\cdot \|\varphi_\Omega\|_{L^1} \inf_{\BB_1} u \\
   & \geq C(n, \Lambda)^{-1} r^{n+\gamma_n}\cdot \sup_\Omega \varphi_\Omega \cdot \|u\|_{L^1(\BB_1; \|C\|)},
   \end{align*}
   where the second inequality follows from (\ref{Equ_U(r)> U(1)r^gamma_n}), the last inequality follows from (\ref{Equ_sup of varphi controled by L^1}) and Corollary \ref{Cor_Harnack Ineq}.  This finish the proof. 
  \end{proof}
  
  \begin{Lem} \label{Lem_Solving equ with two-side barrier}
   Let $\Sigma\subset (\BB_4, g)$ be a two-sided minimal hypersurface under metric $g$ such that, the Hessian of distant function square satisfies $\nabla_g^2 ( dist_g(\mathbf{0}, \cdot)^2) \geq 1$ on $\BB_2$. (Note that this is automatically true when $g$ is $C^2$ close to $g_{Euc}$.)
   
   Let $u\in W^{1,2}_{loc}(\Sigma)$ be a positive super-solution of $\Delta_\Sigma u + (|A_\Sigma|^2-\delta) u = 0$ in distribution sense, i.e.
   \begin{align}
    \int \nabla_\Sigma u \cdot \nabla_\Sigma \xi + (\delta -|A_\Sigma|^2)u\xi\ d\|\Sigma\|\geq 0,\ \ \ \forall \xi\in C^1_c(\Sigma, \RR_{\geq 0}), \label{Equ_Perturb Jac field distrib supersol}
   \end{align}
   where $\delta\leq n/2$.  
   Then there exists $v\in C^2_{loc}(\Sigma\cap \BB_1)$ solving $\Delta_\Sigma v + (|A_\Sigma|^2-\delta) v = 0$ on $\Sigma\cap \BB_1$ such that  \[
     \frac{1}{2}\inf_{\BB_1} u \leq v \leq u   \]
  \end{Lem}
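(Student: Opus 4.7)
The strategy is a classical sandwich argument: solve the Dirichlet problem for the operator $L := \Delta_\Sigma + (|A_\Sigma|^2 - \delta)$ on $\Sigma \cap \BB_1$ with boundary data $u$, then bracket the solution $v$ between $u$ (upper barrier) and an explicit subsolution $\phi$ built from $\rho(x) := \mathrm{dist}_g(x, \mathbf{0})$. For existence, the stability of $\Sigma$ combined with the shift $\delta > 0$ yields the coercivity estimate
\[
Q(\xi) := \int_{\Sigma \cap \BB_1} |\nabla \xi|^2 - (|A_\Sigma|^2 - \delta)\xi^2 \,\geq\, \delta \int \xi^2, \qquad \xi \in W^{1,2}_0(\Sigma \cap \BB_1),
\]
so the Dirichlet problem has a unique weak solution $v$, which is $C^2$ on $\Reg(\Sigma)$ by standard elliptic regularity. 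The singular set $\Sing(\Sigma)$ has codimension at least seven and hence $W^{1,2}$-capacity zero in $\Sigma$, so it is removable for the variational set-up (or one exhausts $\Sigma \cap \BB_1$ by $\Sigma \cap \BB_1 \setminus B_\varepsilon(\Sing(\Sigma))$ and sends $\varepsilon \searrow 0$).

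The upper bound $v \leq u$ is immediate: since $L(u-v) = Lu \leq 0$ in the distribution sense and $(u-v)$ vanishes on the outer boundary, testing against $(u-v)^-$ and invoking $Q((u-v)^-) \geq \delta \|(u-v)^-\|_2^2$ forces $(u-v)^- \equiv 0$.

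For the lower bound --- the crux of the lemma --- I exploit the hypothesis $\nabla_g^2 (\rho^2) \geq g$ on $\BB_2$ together with the minimality of $\Sigma$ to obtain
\[
\Delta_\Sigma \rho^2 \,=\, \mathrm{tr}_{T\Sigma}(\nabla^2 \rho^2) \,\geq\, n \quad \text{on } \Sigma \cap \BB_2.
\]
Setting $c_0 := \inf_{\Sigma \cap \BB_1} u$, I then take the candidate subsolution
\[
\phi \,:=\, \frac{c_0}{2} + \frac{\delta c_0}{n}\, \rho^2,
\]
and compute
\[
L\phi \,=\, \frac{\delta c_0}{n}\Delta_\Sigma \rho^2 + (|A_\Sigma|^2 - \delta)\phi \,\geq\, \delta c_0 + (|A_\Sigma|^2 - \delta)\phi.
\]
A two-case argument on the sign of $|A_\Sigma|^2 - \delta$, combined with the pointwise bound $\phi \leq c_0/2 + \delta c_0/n \leq c_0$ --- where the final inequality is precisely where $\delta \leq n/2$ enters --- then shows $L\phi \geq 0$ throughout $\Sigma \cap \BB_1$. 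Since also $\phi \leq c_0 \leq u = v$ on $\partial \BB_1 \cap \Sigma$, the same coercivity-based maximum principle applied to the subsolution $\phi - v$ (now with nonpositive boundary values) yields $\phi \leq v$, and in particular $v \geq c_0/2$, completing the sandwich.

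The main obstacle I anticipate is justifying the boundary comparison across $\Sing(\Sigma)$: while both $v$ and $\phi$ extend continuously there, the supersolution $u$ may blow up near singularities (consistent with Corollary \ref{Cor_pos super Jac field div near sing}), so prescribing boundary data at the singular set requires a capacity argument. The codimension-$\geq 7$ property of $\Sing(\Sigma)$ makes this work: it gives removability of singularities for $W^{1,2}$-weak solutions of $L$, so both sides of the sandwich inequality are well-defined and the variational maximum principle applies verbatim.
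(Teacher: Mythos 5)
Your overall plan (sandwich argument, Dirichlet problem on an exhaustion, quadratic subsolution built from $\mathrm{dist}_g^2$) is close in spirit to the paper's, but there is one genuine gap that must be fixed: you derive the coercivity of
\[
Q(\xi)=\int|\nabla\xi|^2+(\delta-|A_\Sigma|^2)\xi^2
\]
from \emph{stability} of $\Sigma$. Stability is not a hypothesis of the lemma --- the statement only assumes $\Sigma$ is a two-sided minimal hypersurface together with the Hessian bound on $\mathrm{dist}_g^2$. The correct source of the estimate is the positive supersolution $u$ itself, via the classical log-substitution: given $\phi\in C^1_c(\Sigma)$ write $\psi=u^{-1}\phi$ and test (\ref{Equ_Perturb Jac field distrib supersol}) with $\xi=u\psi^2=\phi\psi$; this yields $Q(\phi)\geq\int u^2|\nabla\psi|^2\geq 0$, with equality only if $\phi\equiv0$. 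This is precisely the paper's ``Claim,'' and it furnishes both the solvability of the variational problem on each $\Omega\subset\subset\Sigma\cap\BB_1$ and the maximum principle you apply (twice). Replacing ``stability of $\Sigma$'' by this argument repairs the existence step and the comparison $v\leq u$.

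Two smaller points. First, you prescribe $u$ itself as Dirichlet data on $\partial\BB_1\cap\Sigma$; this is delicate because $u$ is only $W^{1,2}_{loc}$ and is permitted (indeed, expected, by Corollary \ref{Cor_pos super Jac field div near sing}) to blow up near $\Sing(\Sigma)\cap\partial\BB_1$. The paper sidesteps this by normalizing $\inf_{\BB_1}u=1$, exhausting $\Sigma\cap\BB_1$ by smooth compact subdomains $\Omega$ avoiding $\Sing(\Sigma)$, and imposing the constant boundary data $1$ on $\partial\Omega$; since $u\geq 1$, the test function $(v_\Omega-u)^+$ is compactly supported in $\Omega$, and the Claim then gives $v_\Omega\leq u$ without any trace-theoretic discussion. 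Second, your lower barrier $\phi=\tfrac{c_0}{2}+\tfrac{\delta c_0}{n}\rho^2$ is a correct subsolution; the computation $L\phi\geq \delta c_0+(|A_\Sigma|^2-\delta)\phi$ together with the two cases on the sign of $|A_\Sigma|^2-\delta$ and the bound $\phi\leq c_0$ (using $\delta\leq n/2$) is exactly what is needed, and it exploits the Hessian hypothesis $\nabla_g^2(\rho^2)\geq 1$ (plus minimality, so that $\Delta_\Sigma\rho^2=\mathrm{tr}_{T\Sigma}\nabla^2(\rho^2)\geq n$) in the natural way. The paper uses the apparently linear barrier $w=(1+\rho)/2$, which is arguably less clean in the stated range $\delta\leq n/2$; your quadratic choice is at least as good here. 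In summary: the approach is essentially the paper's once you replace the stability appeal by the supersolution-based nonnegativity of $Q$, and the boundary-data handling is streamlined by using constant data on an exhaustion rather than $u$ itself.
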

  \begin{proof}
   \textbf{Claim.} With a positive $u\in W_{loc}^{1,2}(\Sigma)$ satisfying (\ref{Equ_Perturb Jac field distrib supersol}), we have \[
     \int |\nabla \phi|^2 + (\delta - |A_\Sigma|^2)\phi^2\ d\|\Sigma\| \geq 0, \ \ \ \forall \phi\in C_c^1(\Sigma),   \]
   with equality if and only if $\phi \equiv 0$.\\
   \textit{Proof of the Claim.} For every $\phi\in C_c^1(\Sigma)$, let $\psi:= u^{-1}\phi\in W_0^{1,2}(\Sigma)$.  Take $\xi = u\psi^2 = \phi\psi$ in (\ref{Equ_Perturb Jac field distrib supersol}), we have
   \begin{align*}
    0 \leq \int \nabla u\cdot \nabla(\phi \psi) + (\delta-|A_\Sigma|^2)\phi^2\ d\|\Sigma\| = \int |\nabla \phi|^2 + (\delta-|A_\Sigma|^2)\phi^2 - u^2|\nabla\psi|^2\ d\|\Sigma\| .
   \end{align*}
   This finishes the proof of Claim. \\
   
   
   Now we turn to the proof of Lemma \ref{Lem_Solving equ with two-side barrier}.  By a renormalization, suppose WLOG that $\inf_{\BB_1} u = 1$. 
   Consider for every smooth domain $\Omega\subset \subset \Sigma\cap \BB_1$, minimize \[
     J_\Omega[\phi]:= \int |\nabla \phi|^2 + (-|A_\Sigma|^2 + \delta)\phi^2\ d\|\Sigma\|,   \]
   among $\{\phi: \phi-1\in W_0^{1,2}(\Omega)\}$ to get a minimizer $v_\Omega \in C^2(\Clos(\Omega))$ solving $\Delta_\Sigma v_\Omega + (|A_\Sigma|^2 - \delta)v_\Omega = 0$ in $\Omega$. Since $u\geq 1$ on $\Sigma\cap \BB_1$, we know that $(v_\Omega - u)^+ \in W_0^{1,2}(\Sigma)$ is supported in $\Clos(\Omega)$, thus by taking $\xi = (v_\Omega - u)^+$ in (\ref{Equ_Perturb Jac field distrib supersol}) we have, 
   \begin{align*}
    0 & \geq \int -\nabla u\cdot \nabla (v_\Omega - u)^+ - (\delta - |A_\Sigma|^2)u   (v_\Omega - u)^+ \\
      & = \int \nabla (v_\Omega - u)\cdot \nabla (v_\Omega - u)^+ + (\delta - |A_\Sigma|^2) (v_\Omega - u)\cdot (v_\Omega - u)^+ \\
      & = \int |\nabla \xi|^2 + (\delta - |A_\Sigma|^2)\xi^2 \geq 0,
   \end{align*}
   where the first equality follows by integration by parts and the equation satisfied by $v_\Omega$; the second equality follows from the Claim.  Hence the Claim guarantees that $(v_\Omega - u)^+ = 0$, in other words $v_\Omega\leq u$ in $\Omega$.  On the other hands, let $w:= (1+ dist_g(\cdot, \mathbf{0}))/2 \leq 1$ on $\Sigma\cap \BB_1$,  a simple computation shows that \[
    \Delta_\Sigma w + (|A_\Sigma|^2 - \delta)w \geq 0 \ \text{ on }\Sigma\cap \BB_1.  \]
   This provides a sub-solution to the equation.  Repeat the same process above gives $v_\Omega \geq w \geq 1/2$ on $\Omega$.
   
   Now take $\Omega_j$ be an increasing family of smooth domain approximating $\Sigma\cap \BB_1$, $1/2 \leq v_{\Omega_j}\leq u$ be the solution constructed above relative to $\Omega_j$.  By classical Harnack inequality for Elliptic equations \cite{GilbargTrudinger01}, $v_{\Omega_j}$ subconverges to some solution $v\in C_{loc}^2(\Sigma\cap \BB_1)$ of $\Delta_\Sigma v + (|A_\Sigma|^2 - \delta)v = 0$ which satisfies $1/2 \leq v \leq u$, thus finishes proof of the Lemma.   
  \end{proof}

  \begin{proof}[Proof of Lemma \ref{Lem_Growth Jac Field}]
   Take $r_0 = r_0(n, \Lambda, \gamma) \in (0, \eta(\delta_2, \Lambda, n)/8)$ TBD, where $\delta_2 = \delta_2(n, \Lambda)$ be given by Lemma \ref{Lem_Multi 1 cone away from multi>1 vfds} and $\eta$ is given by the Harnack inequality Corollary \ref{Cor_Harnack Ineq}.
  
   Suppose for contradiction that there exists $\gamma>\gamma_n$ and stable minimal hypersurfaces $\Sigma_j \subset (\BB_4, g_j)$ such that $\mathbf{0}\in \Sing(\Sigma_j)$, $g_j \to g_{Euc}$ in $C^4$ and $\mbfF_{\BB_4r_0^{-1}}(|\Sigma_j|, |C|)\to 0$ for some stable minimal hypercone $C\subset \RR^{n+1}$;  But there are $u_j\in W^{1,2}_{loc}(\Sigma_j)$ weak supersolution to $\Delta_{\Sigma_j} u_j + (|A_{\Sigma_j}|^2 - 1/j)u_j = 0$ and $r_j\in [r_0/2, r_0]$, $r_j\to r_\infty\in [r_0/2, r_0]$ satisfying 
   \begin{align}
    \|u_j\|_{L^1_*(\BB_{r_j}; \|\Sigma\|)} < \|u_j\|_{L^1_*(\BB_1; \|\Sigma\|)}\cdot r_j^\gamma  \label{Equ_Inverse Growth Rate}   
   \end{align}
   Note that by Lemma \ref{Lem_Multi 1 cone away from multi>1 vfds} and $\mbfF_{\BB_{4r_0^{-1}}}(|\Sigma|, |C|)\to 0$, we have for $j>>1$, $\mbfF_{\BB_{4r_0^{-1}}}(|\Sigma_j|, \scM^{\geq 2}) \geq \delta_2(n, 2\Lambda)$.  Hence the Harnack inequality Corollary \ref{Cor_Harnack Ineq} applies for functions on $\Sigma_j$.
      
   Also by Lemma \ref{Lem_Solving equ with two-side barrier}, we can take $v_j\in C^2_{loc}(\Sigma_j\cap \BB_1)$ such that on $\Sigma_j\cap \BB_1$,
   \begin{align}
    0< \frac{1}{2}\inf_{\Sigma_j\cap \BB_1} u_j \leq v_j \leq u_j; &\ &
    \Delta_{\Sigma_j} v_j + (|A_{\Sigma_j}|^2 - 1/j) v_j = 0.   \label{Equ_Lower barrier v_j of contradicting u_j}
   \end{align}
   Thus by applying Harnack inequality Corollary \ref{Cor_Harnack Ineq} on $u_j$ and (\ref{Equ_Inverse Growth Rate}), (\ref{Equ_Lower barrier v_j of contradicting u_j}), we have
   \begin{align}
    \|v_j\|_{L^1_*(\BB_{r_j})} \leq C(n, \Lambda) \inf_{\Sigma_j\cap \BB_1} u_j \cdot r_j^\gamma \leq  C(n, \Lambda) \inf_{\Sigma_j\cap \BB_1} v_j \cdot r_j^\gamma.  \label{Equ_|v_j|_L^1(r_j) < r_j^gamma}
   \end{align}
   
   On the other hand, fix a regular point $x_\infty\in C\cap \BB_{\eta/4}$ and suppose $\Sigma_j\ni x_j \to x_\infty$. Renormalize $v_j$ such that $v_j(x_j)=1$. Then by classical Harnack inequality \cite{GilbargTrudinger01}, after passing to a subsequence, $v_j\to v_\infty$ in $C^2_{loc}$ away from $\Sing(C)$ for some positive Jacobi field $v_\infty\in C^\infty_{loc}(C\cap \BB_1)$, i.e. $(\Delta_C + |A_C|^2) v_\infty = 0$.
   Again by Harnack inequality Corollary \ref{Cor_Harnack Ineq}, for every $p\in (1, n/(n-2))$ we have, 
   \begin{align*}
    \|v_j\|_{L^p}(\BB_{\eta/4}; \|\Sigma_j\|) \leq C(p, n, \Lambda) v_j(x_j) = C(p, n, \Lambda).
   \end{align*}
   Hence by H\"older inequality, for every small neighborhood $\cN\subset \BB_{\eta/4}$ of $\Sing(C)\cap \BB_{\eta/8}$,  we have \[
    \|v_j\|_{L^1(\cN); \|\Sigma_j\|)} \leq \|v_j\|_{L^p(\cN; \|\Sigma_j\|)} \cdot \|\Sigma_j\|(\cN)^{(p-1)/p} \leq C(p, n, \Lambda)\cdot \|\Sigma_j\|(\cN)^{(p-1)/p}.  \]
   This guarantees that $v_j$ are not $L^1$-concentrating near $\Sing(\Sigma_j)\cap \BB_{\eta/8}$. Hence $v_j$ subconverges to $v_\infty$ also in $L^1(\BB_{\eta/8})$.   By (\ref{Equ_|v_j|_L^1(r_j) < r_j^gamma}) and Lemma \ref{Lem_Growth Jac field, Cone}, we derive,
   \begin{align*}
    \|v_\infty\|_{L^1_*(\BB_{r_\infty}; \|C\|)} & \leq C(n, \Lambda)\inf_{C\cap \BB_{\eta/8}} v_\infty \cdot r_\infty^\gamma \\
    & \leq C(n, \Lambda)\|v_\infty\|_{L^1_*(\BB_{\eta/8}; \|C\|)} \cdot r_\infty^\gamma \\
    & \leq \bar{C}(n, \Lambda)\|v_\infty\|_{L^1_*(\BB_{r_\infty}; \|C\|)}  \cdot (\frac{\eta}{8r_\infty})^{\gamma_n}, 
   \end{align*}
   which then implies \[
     \bar{C}(n, \Lambda)\cdot (\frac{\eta}{8})^{\gamma_n} \geq r_\infty^{\gamma_n - \gamma} \geq r_0^{\gamma_n - \gamma}.   \]
   This will be a contradiction if we take $r_0 < \big(\bar{C}(n, \Lambda)\cdot (\eta/8)^{\gamma_n})^{1/(\gamma_n - \gamma)}$.
  \end{proof}

\section{Minimization with Obstacle} \label{Sec_Minz w obstacle}
  Recall the notion of viscosity mean convex is introduced in Definition \ref{Def_Viscosity mean convex}. Our goal is to prove the following.
  \begin{Thm} \label{Thm_App_Mnmzing with Barrier}
  Let $E$ be a compact viscosity mean convex subset in a Riemannian manifold $(M, g)$ of dimension $n+1$; Let $Z\subset \Int(E)$ be a closed $C^2$ domain (possibly be empty).  Then there exists a closed subset $P$ with the following properties,
   \begin{enumerate} [(i)]
   \item $Z\subset P\subset E$;
   \item The topological boundary $\partial P$ is a stable minimal hypersurface with optimally regularity in the portion $M \setminus Z$ and is $C^{1,1}$ near each point of $\partial P\cap \partial Z$; 
   \item For every Caccioppoli set $Q$ such that $Z\subset Q\subset \subset \Int(E)$, we have $\cP(P)\leq \cP(Q)$;
   \item If $(M, g) \cong (\SSp^{n+1}, g_{\text{round}})$ is isometric to the round sphere, then one can choose so that $P\subset \Int(E)$. 
   \end{enumerate}
   In particular, $P$ is a $C^{1,1}$-optimally regular mean convex subset.
  \end{Thm}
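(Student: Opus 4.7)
The plan is to apply the direct method of the calculus of variations to the class
\[
  \mathcal{A} := \{Q \text{ Caccioppoli in } M : Z \subset Q \subset E\},
\]
and then establish the claimed regularity region by region on $\partial P$, treating the interior free boundary, the contact with $\partial E$, and the contact with $\partial Z$ separately.

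For existence, I would note that $\mathcal{A}$ is nonempty (it contains $E$) and has uniformly bounded perimeter: any $Q\in\mathcal{A}$ satisfies $\cP(Q;M)\le \cP(E;M)+\cP(Z;M)$, as one sees by cutting and pasting a perimeter-reducing competitor against $E$ and $Z$. By Fleming--Federer compactness, a minimizing sequence $\mathbf{F}$-subconverges to some $P\in\mathcal{A}$ achieving the infimum. This gives (i) and (iii) at once. Away from $\partial E\cup\partial Z$, $P$ is a local perimeter-minimizer with no constraint, so by the regularity theory of De Giorgi--Fleming--Federer--Simons extended by Schoen--Simon (and by Wickramasekera for the possibility of a shared minimal piece with $\partial E$), $\partial P$ is a stable minimal hypersurface with singular set of codimension $\ge 7$.

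At contact points with $\partial E$, the definition of viscosity mean convexity is exactly the hypothesis needed to apply the Solomon--White and Ilmanen strong maximum principles: if $\partial P$ touches $\partial E$ from inside at a point where $\partial E$ admits a one-sided smooth mean convex barrier, the touching propagates and $\partial P$ locally coincides with a minimal portion of $\partial E$, so the hypothesis $E=\Clos(\Int(E))$ together with the viscosity mean convexity preserves optimal regularity. At contact points with $\partial Z$, this is the classical obstacle problem with $C^2$ obstacle: comparing $P$ with $P\cup N_\varepsilon(\partial Z)$ and $P\setminus N_\varepsilon(\partial Z)$ shows that the generalized mean curvature of $\partial P$ is bounded pointwise by $|H_{\partial Z}|$ on the coincidence set, while on the free side $\partial P$ is stationary. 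The $C^{1,1}$ regularity of $\partial P$ near $\partial Z$ then follows from the standard one-sided obstacle regularity theory (in the form used in \cite[Chapter 2]{Lin85}, already cited in Section~\ref{Subsec_Self-expander}).

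The main obstacle is part (iv), which requires a genuinely global argument specific to the round sphere. The plan is to exploit that $\SSp^{n+1}$ has positive Ricci curvature, so that one can strictly shrink $\partial E$ inward. Concretely, I would apply the level set flow starting from $\partial E$ for a small time $s>0$: this flow preserves viscosity mean convexity and, because there is no closed minimal hypersurface tangent to a strictly mean convex barrier in positive Ricci, produces a nested family $E_s\subsetneq \Int(E)$ of viscosity mean convex sets with $\bigcap_s E_s = \Clos\{x : \text{$\partial E$ never sweeps }x\}$. Since $Z\subset\subset\Int(E)$, for $s$ sufficiently small $Z\subset E_s\subset \Int(E)$, and applying (i)--(iii) to $(E_s,Z)$ produces a minimizer $P_s\subset E_s\subset\Int(E)$. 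Taking $s\to 0$ and using Lemma~\ref{Lem_Cptness of perimeter minzer in E_j} yields a limiting $P$ which is still a minimizer in the original class and still satisfies $P\subset\Int(E)$. The delicate point will be checking that the level set flow actually produces strict inward motion from every point of $\partial E$ in the sphere — this is where the positive curvature is used, and where one must rule out stationary components of $\partial E$ as an obstruction.
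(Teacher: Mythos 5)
The paper does not directly minimize perimeter in the class $\{Z\subset Q\subset E\}$, and there is a concrete reason: the difficulty your proposal elides is establishing that the topological boundary of a direct minimizer has stationary associated varifold in $M\setminus Z$. A minimizer $P$ of perimeter under the one-sided obstacle $Q\subset E$ automatically has $|\partial[P]|$ stationary only in $\Int(E)\setminus Z$; at contact points with $\partial E$, the first variation is one-sided, and turning this into genuine stationarity requires sliding the competitor against the obstacle. In Euclidean space Ilmanen--Sternberg--Ziemer carry this out with translations, but the paper explicitly flags that in a general Riemannian $M$ there are no isometric translations, which is why the proof instead minimizes the $\cA^\epsilon$ functional $\cP(Q)+\int_Q\epsilon\eta\,d\scH^{n+1}$ from \cite{ZhouZhu18}. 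The resulting $Q^\epsilon$ have small \emph{strictly positive} prescribed mean curvature where $\eta=1$, and a Fermi-coordinate translation estimate (Lemma \ref{Lem_App_mean curv of translated graph}) shows the mean curvature changes by $O(d_0)$ under a normal translation of length $d_0$; this buffer forces $\spt[Q^\epsilon]$ to stay a definite distance from $\partial E$ (the role of Claim~2). Consequently $\partial Q^\epsilon$ is regular away from $Z$ because it never meets the rough obstacle $\partial E$, and the limit $P$ as $\epsilon\searrow 0$ inherits optimal regularity from Schoen--Simon compactness of the stationary/CMC boundaries. Your invocation of Solomon--White and Ilmanen maximum principles at contact with $\partial E$ is circular here: those principles presuppose the stationarity you need to prove, and for a merely viscosity mean convex $E$ (where $\partial E$ may be wild and need not even be rectifiable) there is no a priori replacement hypothesis. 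This is a genuine gap in (ii).

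For (iv) you propose shrinking $\partial E$ inward by level set flow and taking a limit. As you acknowledge at the end, this stalls exactly when $\partial E$ contains a stationary minimal piece (e.g.\ a portion of a great sphere in $\SSp^{n+1}$), which viscosity mean convexity does not preclude; the flow will not strictly move such a piece inward, so you cannot conclude $E_s\subset\Int(E)$ for small $s>0$, and the asserted ``no closed minimal hypersurface tangent to a strictly mean convex barrier'' does not apply to a stationary piece of $\partial E$ itself. The paper instead uses the ambient rotations of $\SSp^{n+1}$ as the substitute for Euclidean translations: since $\partial Q^\epsilon$ has mean curvature $\geq \epsilon>0$ on the collar $\{d<1/(2j_0)\}$, sliding $Q^\epsilon$ by an isometry until first touching $\partial E$ contradicts viscosity mean convexity of $E$, forcing $Q^\epsilon$ (and the limit $P$) to stay inside $\{d\geq 1/(2j_0)\}\subset\Int(E)$. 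So for the sphere case the paper is again leaning on the strict positivity of the $\cA^\epsilon$ mean curvature, which your direct minimizer lacks. Unless you can give an independent argument that (a) a direct perimeter minimizer in a viscosity mean convex barrier in an arbitrary $(M,g)$ has stationary boundary varifold and (b) in the round sphere the minimizer avoids $\partial E$, the proof needs to be run through a positive mean curvature regularization as in the paper. The parts of your proposal treating the $C^2$ obstacle $Z$ and the interior regularity are fine and match the paper's use of \cite[Chapter 2]{Lin85} and Schoen--Simon/Wickramasekera.
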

  In the Euclidean space, such existence-with-barrier result was established by \cite[Theorem 3.6]{IlmanenSternbergZiemer98}.  Here we shall use a similar argument.  The only difficulty is that in a general Riemannian manifold, there's usually no isometric translations. So instead of minimizing area, we shall first minimize an $\cA^\epsilon$ functional to find prescribed mean curvature approximations.  Such $\cA^\epsilon$ functional was introduced and carefully studied by \cite{ZhouZhu18}.  
  \begin{proof}
   Suppose WLOG that $injrad(M, g)>1$ and every ball of radius$<1$ in $(M, g)$ is strictly convex. 
   Let $d(x):= dist_g(x, E^c)$ be the distant function defined in $E$; Then there exists $j_0>>1$ such that $Z \subset \{d>1/j_0\}$; Let $\eta\in C^\infty(M; [0, 1])$ be a cut-off function such that $\eta = 0$ in a neighborhood of $Z$ and $\eta = 1$ on $\{d\leq 1/(2j_0)\}$.
    
   Let $\epsilon\in (0, 1)$, consider the the following $\cA^\epsilon$-functional \cite{ZhouZhu18} for a Caccioppoli set $Q$, 
   \begin{align*}
    \cA^\epsilon(Q):= \cP(Q)+ \int_Q \epsilon\cdot \eta(x)\ d\scH^{n+1}(x) .
   \end{align*}
   By first variation \cite{ZhouZhu18}, a stationary Caccioppoli set for $\cA^\epsilon$ has boundary mean curvature equals to $\epsilon\cdot \eta$.\\
   \textbf{Claim 1.} There exists $j_\epsilon\geq 4j_0 + 1/\epsilon$ such that for every $j> j_\epsilon$, there exists a closed Caccioppoli set $Z \subset Q_j\subset \{d> 1/j\}$ which minimize $\cA^\epsilon$ among 
   \begin{align}
     \scT_j := \{Q: Z \subset Q \subset \{d\geq 1/j\} \}.  \label{Append_family scT_j of Cacciop set}    
   \end{align}
   In particular, $\partial Q_j$ is optimally regular and stable in $M\setminus Z$ and $C^{1,1}$ near each point on $\partial Z$.\\
   
   With this Claim 1, choose $Q^\epsilon := Q_{j_\epsilon + 1}$ and send $\epsilon\searrow 0$, by spirit of Lemma \ref{Lem_Cptness of perimeter minzer in E_j} and \cite[Chapter 2]{Lin85}, $Q^\epsilon$ $\mbfF$-subconverges to some closed Caccioppoli set $P = \spt[P]\subset E$, with $\partial P$ stable minimal hypersuraces in $M\setminus Z$ and $C^{1,1}$ near $\partial Z$. This proves (i) and (ii).   Also since $P$ is the $\mbfF$-limit of $\cA^\epsilon$-minimizer $Q^\epsilon$, (iii) also holds for $P$. When $(M, g) \cong (\SSp^{n+1}, g_{\text{round}})$, since $\partial Q^\epsilon$ is $\cA^\epsilon$-stationary and thus has mean curvature $>0$ on $\{d< 1/(2j_0)\}$, by a rotation of $\SSp^{n+1}$ and viscosity mean convexity of $E$, we must have $d\geq 1/(2j_0)$ along $Q^\epsilon$ and then along $P$.  This proves (iv), and finish the proof of Theorem \ref{Thm_App_Mnmzing with Barrier}.\\
   
\noindent \textit{Proof of Claim 1.}  Let $j_\epsilon= j(\epsilon, g)\geq 4j_0 + 1/\epsilon$ to be determined later. 

   For $j>j_\epsilon$, let $\bar{Q}_j\in \scT_j$ be a general $\cA^\epsilon$-minimizer among $\scT_j$, where $\scT_j$ is defined in (\ref{Append_family scT_j of Cacciop set}). For every $x\in \spt[\bar{Q}_j]\cap \{d = 1/j\}$, consider minimize $\cA^\epsilon$ among \[
     \{R\subset M:  R\Delta \bar{Q}_j \subset B_{1/j}(x)\},   \]
   to find a Caccioppoli set $\tilde{Q}_j$. \\
   \textbf{Claim 2.} By taking $j_\epsilon$ large, for every $y\in \spt[\tilde{Q}_j]\cap B_{1/j}(x)$, we have $d(y)> 1/j$. 
   
   With this Claim 2, $\tilde{Q}_j\in \scT_j$ is also an $\cA^\epsilon$-minimizer among $\scT_j$, and whenever $\bar{Q}_j$ is $\cA^\epsilon$-stationary and optimally regular in an open subset $W\subset \{d>1/j\}$, by unique continuation of CMC hypersurfaces \cite{ZhouZhu18}, $\tilde{Q}_j$ must be $\cA^\epsilon$-stationary and optimally regular in $W\cup B_{1/j}(x)$. Hence by covering $\{d = 1/j\}$ with finitely many ball of radius $1/j$ and repeat the replacement process, we can find $Q_j$ satisfying the assertion of Claim 1. \\
   \textit{Proof of Claim 2.} Let $j_\epsilon = j(\epsilon, g)\geq 4j_0+ 1/\epsilon$ TBD.  By definition of $\tilde{Q}_j$ and strict convexity of $B_{1/j}(x)$, we must have\[
     \spt[\tilde{Q}_j]\setminus B_{1/j}(x) \subset \spt[\bar{Q}_j]\setminus B_{1/j}(x) \subset \{d\geq 1/j\}.   \]
   If Claim 2 fails, then there exists $y\in \spt(\partial [\tilde{Q}_j])\cap B_{1/j}(x)$ realizing $\inf \{d(z): z\in \spt(\tilde{Q}_j)\cap B_{1/j}(x)\}$. Let $y'\in \partial E$ and $0< d_0\leq 1/j$ be such that $dist_g(y, y') = d(y) = d_0$.  Since $B_{d_0}(y')\cap \spt(\tilde{Q}_j)=\emptyset$, the tangent cone of $\partial_* \tilde{Q}_j$ at $y$ must be a plane, and hence by $\cA^\epsilon$-minimizing of $\tilde{Q}_j$ in $B_{1/j}(x)$, we know that $y$ is a regular point of $\partial [\tilde{Q}_j]$.  Moreover, by comparing the second fundamental form $\tilde{A}_j$ of $\partial [\tilde{Q}_j]$ and second fundamental form of $\partial B_{d_0}(y')$, together with the fact that the mean curvature of $\partial [\tilde{Q}_j]$ at $y$ is $\epsilon$ by first variation of $\cA^\epsilon$, we know that,
   \begin{align}
    |\tilde{A}_j(y)|\leq C(n, g)\cdot d_0^{-1}.  \label{Append_Bd 2nd fund form |A_j|(y)< d_0^(-1)}
   \end{align}
   
   On the other hand, let $L_y$ be the image of $B_\epsilon(\mathbf{0}) \cap \tilde{\nu}_y^\perp \subset T_y M$ under exponential map under metric $g$ centered at $y$, where $\tilde{\nu}_y$ is the unit normal vector of $\partial [\tilde{Q}_j]$ at $y$ pointing towards $y'$.  We shall work under Fermi coordinates \[
    \Phi: L_y\times (-\epsilon, \epsilon) \to M, \ \ \ (z, t)\mapsto  \exp^g_z(t\cdot \nu^L(z)),  \]
   where $\nu^L$ be the unit normal field of $L_y$ such that $\nu^L(y) = \tilde{\nu}_y$. 
   
   Consider $R := \Phi(\{(z, t+d_0): \Phi(z, t)\in \spt(\tilde{Q}_j)\cap B_{1/j}(x) \})$. Since $dist_g(\Phi(z, t), \Phi(z, t+ d_0)) \leq d_0$, we have $R \subset E$ and $y'= \Phi(y, d_0)\in R$ is a regular point of $\partial R$. Moreover, by (\ref{Append_Bd 2nd fund form |A_j|(y)< d_0^(-1)}) and the following Lemma \ref{Lem_App_mean curv of translated graph}, the mean curvature $H_{\partial R}(y')$ of $\partial R$ at $y'$ satisfies, \[
     H_{\partial R}(y') \geq \tilde{H}_j(y) - \bar{C}(n, g)\cdot d_0 \geq \epsilon - \bar{C}(n, g)\cdot j_\epsilon^{-1},    \] 
   where recall the mean curvature of $\partial [\tilde{Q}_j]$ at $y$ is $\tilde{H}_j(y) = \epsilon$.  Choosing $j_\epsilon > \bar{C}(n, g)\cdot \epsilon^{-1}$, we have $H_{\partial R}(y') > 0$, violates the viscosity mean convexity of $E$ and hence is a contradiction.  This finish the proof of Claim 2.
  \end{proof}
  
  \begin{Lem} \label{Lem_App_mean curv of translated graph}
   Let $\{g^t\}_{t\in (-1,1)}$ be a smooth family of Riemannian metric on $L:= \BB_1^n(\mathbf{0})$ (not necessarily complete); Let $\{A^t:= \frac{1}{2}\frac{d}{dt}(g^t)\}_{t\in (-1,1)}$ be the family of symmetric 2-tensor of derivative of $g^t$.  Assume that $A^0(\mathbf{0}) = 0$.
   
   Consider on the cylinder $L\times (-1, 1)$, the metric $g:= g^t\oplus dt^2$.  Assume that the Riemannian curvature tensor of $g$ satisfies $|Rm_g|\leq 1/2n$ on $L\times (-1, 1)$.
   
   Let $u$ be a $C^2$-function on some small ball $\BB_r(\mathbf{0})\subset L$ taking value in $(-1, 1)$ such that $u(\mathbf{0})=: a$ and $du (\mathbf{0}) = 0$. Let $\Gamma_u := \{(z, u(z)): z\in \BB_r\} \subset L\times (-1, 1)$ be a $C^2$ hypersurface.  Let $\{z^i\}_{1\leq i\leq n}$ be a normal coordinates of $g^0$ defined near $\mathbf{0}$ on $L$.
   Then the mean curvature of $\Gamma_u$ at $(\mathbf{0}, a)$ with respect to the upper-pointed normal field is given by \[
     H_{\Gamma_u}(\mathbf{0}, a) =  (\nabla^2_{g_0} u(\mathbf{0}) - A^a(\mathbf{0}))_{ij}\cdot g_a(\mathbf{0})^{ij},    \]
     
   In particular, the difference between mean curvature of $\Gamma_u$ at $(\mathbf{0}, a)$ and $\Gamma_{u-a}$ at $(\mathbf{0}, 0)$ has estimate \[
     |H_{\Gamma_u}(\mathbf{0}, a) - H_{\Gamma_{u-a}}(\mathbf{0}, 0)| \leq C(n)(a^2|A_{\Gamma_{u-a}}|(\mathbf{0}, 0) + a).    \]
  \end{Lem}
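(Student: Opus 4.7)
The plan is to exploit the hypothesis $du(\mathbf{0}) = 0$: at the point $(\mathbf{0}, a)$ the tangent plane to $\Gamma_u$ is exactly horizontal, so the unit upward normal is $\partial_t$ and the induced metric is $g^a(\mathbf{0})$. This collapses all the usual graph-of-a-function terms and reduces the computation to two pieces: the Hessian of $u$ and the slice second fundamental form encoded in $\partial_t g^t$.

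First I would parametrize $\Gamma_u$ by $\Phi(z) = (z, u(z))$, with tangent fields $\Phi_i = \partial_{z^i} + u_i\partial_t$; at $z=\mathbf{0}$ these reduce to $\partial_{z^i}$ since $u_i(\mathbf{0}) = 0$. Using the paper's convention $A_\Sigma(X, Y) = \langle \nabla_X Y, \nu\rangle$ (equivalent to $A_\Sigma = -\nabla\nu$), the first-order vanishing of $u$ kills the cross terms, and a direct calculation yields
\[
A_{\Gamma_u}(\partial_i, \partial_j)\big|_{(\mathbf{0}, a)} \;=\; u_{ij}(\mathbf{0}) + \langle \partial_t, \nabla_{\partial_i}\partial_j\rangle\big|_{(\mathbf{0}, a)}.
\]
For $g = g^t\oplus dt^2$, the Christoffel symbol $\Gamma^t_{ij} = \langle\partial_t, \nabla_{\partial_i}\partial_j\rangle$ works out to $-A^t_{ij}(z)$. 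In the $g^0$-normal coordinates $\{z^i\}$ at $\mathbf{0}$, the second partials $u_{ij}(\mathbf{0})$ coincide with $(\nabla^2_{g^0} u)_{ij}(\mathbf{0})$. Tracing against the induced metric $(g^a)^{ij}(\mathbf{0})$ produces the first formula.

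For the difference estimate I apply the formula twice, once to $u$ at $(\mathbf{0}, a)$ and once to $u-a$ at $(\mathbf{0}, 0)$. Since $d(u-a)(\mathbf{0}) = 0$ and $A^0(\mathbf{0}) = 0$, the base case gives
\[
H_{\Gamma_{u-a}}(\mathbf{0}, 0) = \delta^{ij}(\nabla^2_{g^0} u)_{ij}(\mathbf{0}), \qquad A_{\Gamma_{u-a}}(\mathbf{0}, 0)_{ij} = (\nabla^2_{g^0} u)_{ij}(\mathbf{0}),
\]
so subtracting yields
\[
H_{\Gamma_u}(\mathbf{0}, a) - H_{\Gamma_{u-a}}(\mathbf{0}, 0) = (\nabla^2_{g^0} u)_{ij}(\mathbf{0})\bigl[(g^a)^{ij}(\mathbf{0}) - \delta^{ij}\bigr] - A^a_{ij}(\mathbf{0})\,(g^a)^{ij}(\mathbf{0}).
\]
Then I Taylor-expand in $t$: since $A^0(\mathbf{0}) = 0$ one gets $A^a(\mathbf{0}) = O(a)$, and integrating $\partial_t g^t_{ij} = 2A^t_{ij}$ from $0$ to $a$ gives $g^a(\mathbf{0}) - \mathrm{Id} = O(a^2)$, hence $(g^a)^{-1}(\mathbf{0}) - \mathrm{Id} = O(a^2)$. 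Feeding these into the displayed identity, together with $|\nabla^2_{g^0} u(\mathbf{0})| = |A_{\Gamma_{u-a}}(\mathbf{0}, 0)|$, gives the claimed $O(a^2|A_{\Gamma_{u-a}}| + a)$ bound.

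The main obstacle is converting the hypothesis $|\mathrm{Rm}_g|\le 1/(2n)$ into a uniform $O(a)$ control on $A^a(\mathbf{0})$, since $A^t$ is what interpolates between the formulas at $t=0$ and $t=a$. For this I would use the Riccati equation for the horizontal foliation, $\partial_t A^t = (A^t)^2 - R(\cdot, \partial_t, \cdot, \partial_t)$: starting from $A^0(\mathbf{0}) = 0$, the curvature bound together with a Gronwall argument gives $|A^t(\mathbf{0})| \le C(n)\,t$ on the relevant interval, and this is exactly the ingredient feeding both Taylor estimates. Beyond this point, sign-and-convention bookkeeping is the only thing to watch; everything else is elementary Taylor expansion.
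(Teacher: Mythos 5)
Your proposal is correct and follows essentially the same route as the paper: parametrize the graph, use $du(\mathbf{0})=0$ to identify the normal with $\partial_t$ and reduce $A_{\Gamma_u}$ to $\nabla^2_{g^0}u - A^a$, then derive the Riccati equation for $A^t$ from the curvature bound to get $|A^t(\mathbf{0})|=O(t)$ and $|g^t(\mathbf{0})-\delta|=O(t^2)$, and finally subtract the two mean-curvature formulas. The only cosmetic difference is that the paper calls the ODE/Gronwall step a ``bootstrap'' and leaves the final subtraction implicit, while you spell it out; the substance is identical.
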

  \begin{proof}
   First note that $\{z^1, ..., z^n, t\}$ is a local coordinates of $L\times (-1, 1)$ with the coordinate vector fields denoted by $\{\partial_1, ..., \partial_n, \partial_t\}$. By \cite{Petersen06RiemGeom}, under this,
   \begin{align*}
    2 A^t (\partial_i, \partial_j) & = \frac{d}{dt} g^t(\partial_i, \partial_j) = \nabla^g_{\partial_t} (g(\partial_i, \partial_j)) = 2g(\nabla^g_{\partial_i} \partial_t, \partial_j); \\
    \frac{d}{dt} A^t(\partial_i, \partial_j) & = \frac{1}{2}g(\nabla^g_{\partial_t}\nabla^g_{\partial_i}\partial_t, \partial_j) + \frac{1}{2}g(\nabla^g_{\partial_t}\nabla^g_{\partial_j}\partial_t, \partial_i) + g(\nabla^g_{\partial_i}\partial_t, \nabla^g_{\partial_j}\partial_t) \\
     & = Rm_g(\partial_t, \partial_i, \partial_t, \partial_j) + A^t(\partial_i, \partial_k)A^t(\partial_j, \partial_l)g_t^{kl},
   \end{align*}
   where $[g_t^{kl}]$ is the inverse of matrix $[g^t_{ij}]$.  In particular, by a bootstrap argument, we have 
   \begin{align}
    |A^t(\partial_i, \partial_j)| \leq \frac{t}{2}, &\  & |g^t _{ij} - \delta_{ij}|\leq \frac{t^2}{2}.  \label{Equ_Error Est on A^t and |g^t -delta|}
   \end{align}
   
   Let $\Phi_u: z\mapsto (z, u(z))$ be a parametrization of $\Gamma_u$.  Then the upward pointed normal field of $\Gamma_u$ at $(\mathbf{0}, a)$ is \[
     \nu^u(\mathbf{0}, a) = \frac{-\partial_i ug_u^{ij}\partial_j + \partial_t}{\sqrt{1+ \partial_i u \partial_j u g_u^{ij}}} = \partial_t;   \]
   The induced metric of $\Gamma_u$ under this parametrization is \[
     g(\partial_i \Phi_u, \partial_j \Phi_u) = g(\partial_i + \partial_i u\cdot \partial_t, \partial_j + \partial_j u \cdot \partial_t);   \]
   And the second fundamental form of $\Gamma_u$ at $(\mathbf{0}, a)$ is \[
     A_{\Gamma_u}(\partial_i \Phi_u, \partial_j \Phi_u)|_{(\mathbf{0}, a)} = \langle \nabla^g_{\partial_i + \partial_i u\cdot\partial_t} (\partial_j + \partial_ju\cdot \partial_t), \nu^u \rangle|_{(\mathbf{0}, a)} = - A^a(\mathbf{0}) + \nabla^2_{g^0}u(\mathbf{0})(\partial_i, \partial_j).   \]
   Hence the mean curvature of $\Gamma_u$ at $(\mathbf{0}, a)$ is \[
     H_{\Gamma_u}(\mathbf{0}, a) = (\nabla_{g^0}^2 u (\mathbf{0})- A^a(\mathbf{0}))_{ij}\cdot g_a(\mathbf{0})^{ij};   \]
   The estimate on the difference of mean curvature follows from the estimate (\ref{Equ_Error Est on A^t and |g^t -delta|}).
  \end{proof}
  
  \begin{Cor} \label{Cor_C^1,1 optimal reg mean convex approx}
   Let $\cE \subset (\SSp^{n+1}, g_{\text{round}})$ be a closed viscosity mean convex subset with connected interior.  Then there exists an increasing sequence of closed connected $C^{1,1}$ optimally regular mean convex subsets $\{\cE_j \subset \Int(\cE)\}_{j \geq 1}$ such that when $j\to \infty$, $\cE_j \to \cE$, $\partial \cE_j \to \partial \cE$ both in Hausdorff distant sense.  
   Furthermore, if the topological boundary $\partial \cE$ is $n$-rectifiable (In particular $\cE$ is a Caccioppoli set),  then we can choose so that \[
     \lim_{j\to \infty} \cP(\cE_j) = \cP(\cE).   \]
  \end{Cor}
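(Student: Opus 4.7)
The plan is to construct $\{\cE_j\}$ inductively by repeatedly invoking Theorem \ref{Thm_App_Mnmzing with Barrier}(iv) on $(\SSp^{n+1},g_{\mathrm{round}})$ with $E=\cE$ and a judiciously chosen $C^2$ obstacle $Z_j$ containing both the previously constructed $\cE_{j-1}$ and a larger and larger compact chunk of $\Int(\cE)$.

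First I fix a nested exhausting sequence of closed $C^2$ domains $\{K_j\subset\subset\Int(\cE)\}_{j\geq 1}$ with $K_j\subset K_{j+1}$ and $\bigcup_jK_j=\Int(\cE)$, produced by smoothing the Lipschitz function $x\mapsto\mathrm{dist}(x,\partial\cE)$ on the sphere and picking regular values via Sard's theorem. Setting $\cE_0:=\emptyset$, having produced $\cE_{j-1}$, I build a closed $C^2$ obstacle $Z_j$ with $\cE_{j-1}\cup K_j\subset Z_j\subset\Int(\cE)$ by the same smoothing-plus-Sard recipe applied to a mollification of the indicator of $\cE_{j-1}\cup K_j$. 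Theorem \ref{Thm_App_Mnmzing with Barrier}(iv) applied with this $Z_j$ then produces $\cE_j\subset\Int(\cE)$ containing $Z_j$, with $\partial\cE_j$ a $C^{1,1}$-optimally regular stable minimal hypersurface that is $C^{1,1}$ across $\partial Z_j$. A short argument using minimization property (iii) shows that any connected component of $\cE_j$ disjoint from $Z_j$ would itself yield a strict competitor, since removing it preserves $Z_j\subset\cE_j\subset\subset\Int(\cE)$ and strictly decreases perimeter (using connectedness of $\Int(\cE)$); thus $\cE_j$ must be connected. By construction $\cE_j\supset\cE_{j-1}$ (monotonicity) and $\cE_j\supset K_j$.

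Hausdorff convergence $\cE_j\to\cE$ is immediate from $K_j\subset\cE_j\subset\cE$ together with $\cE=\Clos(\Int(\cE))$. To prove $\partial\cE_j\to\partial\cE$ in Hausdorff distance, I would argue as follows. For any $x\in\partial\cE$, since $x\notin\cE_j\subset\Int(\cE)$, the nearest point of $\cE_j$ to $x$ lies in $\partial\cE_j$ with distance bounded by $d_H(\cE_j,\cE)\to 0$. Conversely, if some subsequence $z_{j_k}\in\partial\cE_{j_k}$ kept distance $\geq\epsilon$ from $\partial\cE$, passing to a further subsequence $z_{j_k}\to z\in\Int(\cE)$ with $B_{\epsilon/2}(z)\subset\Int(\cE)$; then for $j_0$ large $B_{\epsilon/4}(z)\subset K_{j_0}\subset\cE_{j_k}$ for all $j_k\geq j_0$ by monotonicity, placing $z_{j_k}$ in the interior of $\cE_{j_k}$ and contradicting $z_{j_k}\in\partial\cE_{j_k}$.

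For the final assertion, when $\cE$ is Caccioppoli a standard BV-approximation argument (carried out locally on the sphere via a partition of unity) yields smooth closed domains $U_k\subset\subset\Int(\cE)$ with $\cP(U_k)\to\cP(\cE)$. For each $j$, pick $k(j)$ so large that $U_{k(j)}\supset Z_j$ and $\cP(U_{k(j)})\leq\cP(\cE)+1/j$; then the minimization property (iii) of Theorem \ref{Thm_App_Mnmzing with Barrier} gives $\cP(\cE_j)\leq\cP(U_{k(j)})$, so $\limsup_j\cP(\cE_j)\leq\cP(\cE)$. Combined with lower semicontinuity of perimeter under $\mbfF$-convergence this yields $\cP(\cE_j)\to\cP(\cE)$. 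The main technical obstacle I anticipate lies here: verifying that the classical Euclidean BV-approximation of a Caccioppoli set from inside with perimeter converging transfers to the sphere. This is essentially routine via a partition of unity, but some care is required to glue the local Euclidean mollifications without losing the requirement that $U_k\supset Z_j$ while keeping the perimeter limit from below.
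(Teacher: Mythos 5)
Your proposal follows essentially the same blueprint as the paper's proof: both repeatedly invoke Theorem~\ref{Thm_App_Mnmzing with Barrier} on $(\SSp^{n+1}, g_{\mathrm{round}})$ with $E = \cE$ and a growing $C^2$ obstacle to produce $\cE_j \subset \Int(\cE)$, and both handle the perimeter claim via an inner approximation whose perimeters converge to $\cP(\cE)$. The differences are worth noting. For monotonicity, you bake $\cE_{j-1}$ directly into the obstacle $Z_j$, giving $\cE_j \supset \cE_{j-1}$ for free; the paper instead takes fixed nested obstacles $Z_j = \{\tilde d_\cE \geq \epsilon_j\}$ (sublevel sets of a smoothed distance function) and does not explicitly arrange nestedness, so your inductive refinement is arguably cleaner on this point. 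For connectedness, the paper simply selects the connected component of $P_j$ containing a fixed $\omega_0$ and uses connectedness of $\Int(\cE)$ to see this component eventually swallows all of $\Int(\cE)$; your competitor argument (any component disjoint from $Z_j$ can be deleted, strictly decreasing perimeter while keeping $Z_j \subset Q \subset\subset \Int(\cE)$) is sound but only yields that every component of $\cE_j$ meets $Z_j$; concluding $\cE_j$ is connected additionally requires $Z_j$ itself to be connected, which you need to arrange in the smoothing-plus-Sard step and which you should make explicit, e.g.\ by choosing the $K_j$ connected and noting $K_{j-1} \subset \cE_{j-1} \cap K_j$ by induction so $\cE_{j-1} \cup K_j$ is connected, then keeping the mollification parameter small enough to preserve connectedness of the chosen superlevel set. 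For the perimeter convergence, the technical worry you flag about gluing Euclidean BV-mollifications on the sphere is exactly what the paper avoids: it invokes \cite[Theorem 3.2.39]{Federer69} (rectifiability of $\partial\cE$ implies $\cH^n(\partial\cE)<\infty$) together with the coarea formula for the Lipschitz distance function $d_\cE$ to produce $R_j = \{d_\cE \geq \delta_j\}$ with $\cP(R_j) \to \cP(\cE)$, and then applies (iii) directly; this route sidesteps the partition-of-unity bookkeeping entirely and is the cleaner choice on the sphere.
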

  \begin{proof}
   Let $d_{\cE}:= dist_{\SSp^{n+1}}(\cdot, \cE^c)$ be the distant function to the boundary defined in $\cE$.  $\tilde{d}_{\cE}$ be a smooth function on $\Int(\cE)$ such that $d_{\cE} \leq \tilde{d}_{\cE} \leq d_{\cE}$. Let $\epsilon_j\searrow 0$ be a sequence of regular value of $\tilde{d}_{\cE}$,  and $Z_j:= \{\tilde{d}_{\cE} \geq \epsilon_j\} \subset \Int(\cE)$ are compact smooth domains. 
   
   By Theorem \ref{Thm_App_Mnmzing with Barrier}, there exists $C^{1,1}$-optimally regular closed mean convex subset $Z_j \subset P_j \subset \Int(\cE)$. Clearly by definition, $P_j\to \cE$ and $\partial P_j \to \partial \cE$ both in Hausdorff distant sense.   Also, let $j_0>>1$ such that $Z_{j_0}\neq \emptyset$ and fix $\omega_0\in Z_{j_0}$;  Take $\cE_j$ to be the connect component of $P_j$ containing $\omega_0$, we know that since $\Int(\cE)$ is connected, $\cE_j\to \cE$ , $\partial \cE_j \to \partial \cE$ also in Hausdorff distant sense.
   
   If further $\partial \cE$ is $n$-rectifiable, by \cite[Theorem 3.2.39]{Federer69} and co-area formula, there exists $R_j := \{d_{\cE} \geq \delta_j\} \subset \Int(\cE)$ such that $\lim_{j\to \infty}\cP(R_j) = \cP(\cE)$.  Then by Theorem \ref{Thm_App_Mnmzing with Barrier} (iii) and upper semi-continuity of perimeter under convergence, we have $\lim_{j\to \infty} \cP(\cE_j) = \cP(\cE)$.
  \end{proof}

\bibliographystyle{alpha}
\bibliography{GMT}
\end{document}